\numberwithin{equation}{section}
\newtheorem{theorem} {Theorem} [section]
\newtheorem{proposition}[theorem]{Proposition}
\newtheorem{corollary}  [theorem]     {Corollary}
\newtheorem{lemma}  [theorem]     {Lemma}
\theoremstyle{definition}
\newtheorem{definition}  [theorem]     {Definition}
\newcommand{\eps}{\varepsilon}
\renewcommand*\backref[1]{}
\renewcommand*\backrefalt[4]{ \ifcase #1 \or (cited on page #2) \else (cited on pages #2) \fi}
\begin{document}

\title{On Hermitian manifolds whose Chern connection is Ambrose-Singer}

\author{Lei Ni}
\address{Lei Ni. Department of Mathematics, University of California, San Diego, La Jolla, CA 92093, USA}
\email{leni@ucsd.edu}

\author{Fangyang Zheng} \thanks{The research is partially supported by NSFC grants \# 12071050  and 12141101, Chongqing grant cstc2021ycjh-bgzxm0139, and is supported by the 111 Project D21024.}
\address{Fangyang Zheng. School of Mathematical Sciences, Chongqing Normal University, Chongqing 401331, China}
\email{20190045@cqnu.edu.cn; \ franciszheng@yahoo.com }

\subjclass[2010]{53C55 (primary), 53C05 (secondary)}
\keywords{Hermitian manifold; Chern connection; Ambrose-Singer connections; Hermitian (locally) homogeneous manifolds}

\begin{abstract}
We consider the class of compact Hermitian manifolds whose Chern connection is Ambrose-Singer, namely, it has parallel  torsion and curvature. We prove structure theorems for such manifolds.
\end{abstract}

\subjclass[2010]{ 53C55 (Primary), 53C05 (Secondary)}
\keywords{Ambrose-Singer connection, Chern connection, holonomy algebra/group, generalized Calabi-Yau, Chern flat, locally homogeneous Hermitian manifolds}

\maketitle

\markleft{Ni and Zheng}
\markright{On CAS manifolds}

\tableofcontents

\section{Introduction and main results}

Generalizing Cartan's characterization of symmetric Riemannian manifolds, W. Ambrose and I. M. Singer obtained in 1958 \cite{AS} their celebrated theorem which states that a complete, simply-connected Riemannian manifold is (Riemannian) homogeneous (meaning that its isometry group acts transitively) if and only if it admits a metric connection with parallel torsion and curvature. In view of this result, a connection is said to be {\em Ambrose-Singer,}  if it has parallel torsion and curvature with respect to itself. More specifically, the result of \cite{AS} says that a complete Riemannian manifold is locally homogenous (meaning its universal cover is homogeneous) if and only if it admits a metric connection which is Ambrose-Singer. Similar results for affine connections (without  a metric) was obtained around the same time by H. C. Wang \cite{Wang-58}, whose coverage  can  be found on page 262 of Vol.I of \cite{KN} and Theorem 2.8 of Ch.10 of \cite{KN}.

When the manifold is Hermitian (or more generally, almost Hermitian), one naturally restricts the consideration to {\em Hermitian connections,} meaning a connection $\nabla$ on the manifold that is both metric ($\nabla g=0$) and almost complex ($\nabla J=0$), where $g$ is the Hermitian metric and $J$ the almost complex structure. The Ambrose-Singer Theorem has a natural extension to the (almost) Hermitian version, proved by Sekigawa \cite{Sekigawa} in 1978. It states that a complete, simply-connected Hermitian manifold is (Hermitian) homogeneous (meaning its group of holomorphic isometries acts transitively) if and only if it admits a Hermitian Ambrose-Singer connection.

If one drops the completeness and simply-connectedness assumption, the concept of local homogeneity can be defined via local isometries, and the correlation between local homogeneity and the existence of Ambrose-Singer connections is still valid. In the literature there are extensive studies of the correspondence between the geometry of locally homogeneous manifolds and algebraic models formed by the torsion and curvature of an AS connection. We refer the readers to the papers \cite{Singer}, \cite{Kiricenko}, \cite{Tricerri}, \cite{TV}, \cite{NT}, \cite{CN} and the references therein for more details in this direction. In this article, we are only interested in compact complex manifolds, so being locally homogenous for us simply means the universal cover is homogeneous.

On the other hand  a Hermitian manifold admits some canonical/unique Hermitian connections, including the Chern connection $\nabla^c$, the Strominger (also known as Bismut, cf.\,\cite{WYZ} for some details) connection $\nabla^s$, and their combinations, the so-called $t$-Gauduchon connections $\nabla^{t} = (1-t)\nabla^c + t\nabla^s$, where $t\in {\mathbb R}$. Here we study the question:
{\it
What  compact Hermitian manifolds $(M^n,g)$  will have their Chern (or Strominger, or $t$-Gauduchon) connection to be Ambrose-Singer?}

As the Chern/Strominger-Bismut connection is Levi-Civita if and only if the manifold is K\"ahler, we expect that the above class of manifolds  forms a special class of locally homogenous manifolds which is subject to explicit descriptions. For convenience in future discussions, let us introduce the following terminology:

\begin{definition}
A {\em CAS manifold} is a compact Hermitian manifold with its Chern connection being Ambrose-Singer, namely, the Chern connection has parallel torsion and curvature. Dropping the compactness assumption we say such a Hermitian manifold admits a {\em CAS structure.}
\end{definition}

One can define {\em SAS} or {\em $t$-GAS manifolds} similarly for compact Hermitian manifolds whose Strominger/Bismut connection is Ambrose-Singer. The project is to understand the set of all CAS (or SAS, or $t$-GAS) manifolds. In this paper we  focus on the CAS case, and leave the other types to a future study.

For homogeneous complex manifolds and homogeneous K\"ahler manifolds (namely a K\"ahler manifold admits a transitive action by a holomorphic isometry group), much more have been known since the celebrated works of H. C. Wang \cite{Wang}, Borel \cite{Borel}, Tits \cite{Tits} etc. In particular, \cite{Tits} (cf. see also \cite{GR}) proved that any compact  homogeneous complex manifold  has a unique fibering with parallelizable fibre over a D-space (which is a simply-connected homogeneous complex manifold obtained by a complex Lie group module a parabolic subgroup).  Some   classifications of the lower dimension cases can also be found there. The simply-connected compact homogeneous complex manifolds were characterized by H. C. Wang. One can find a detailed classification/study of three dimensional homogeneous complex manifolds in \cite{Win}. It was further proved in \cite{GR} that a compact homogenous complex manifold  is a holomorphic fiber bundle over a homogenous projective manifold with a complex parallelizable fiber.

The first observation is that all CAS manifolds are Chern K\"ahler-like, meaning that the curvature tensor $R$ of the Chern connection $\nabla$ obeys all the symmetry conditions enjoyed by the curvature tensor of a K\"ahler metric. Moreover, by \cite[Theorem 3]{YZ}, any compact Chern K\"ahler-like manifold is balanced, meaning that $d(\omega^{n-1})=0$ where $\omega$ is the K\"ahler form. In particular, for the case $n=\dim_{\mathbb{C}}(M)=2$, the metric is K\"ahler, thus the manifold is a locally Hermitian symmetric space. So the nontrivial study of CAS manifolds starts with dimension $3$.

By a classical theorem of Boothby \cite{Boothby}, compact Hermitian manifolds with flat Chern connection are exactly quotients of complex Lie groups equipped with left invariant Hermitian metrics, in particular the Chern connection has parallel torsion. That is, {\em any compact Chern flat manifolds are always CAS}, so we have
$$ \{ \mbox{Chern\, flat} \} \subseteq  \{ \mbox{CAS} \} \subseteq  \{ \mbox{Chern\, K\"ahler-like}\}.$$

Given a CAS manifold $(M^n,g)$, if $g$ is K\"ahler, then the Chern connection coincides with the Levi-Civita connection, so the manifold is a locally Hermitian symmetric space. On the other hand, any compact locally Hermitian symmetric space is certainly CAS (and $t$-GAS for any $t$ for that matter), so our quest here is really to understand the non-K\"ahler ones.

A purpose of this article is to give a classification of CAS manifolds in dimensions $3$ and $4$:

\begin{theorem}\label{thm-dim3}
Let $(M^3,g)$ be a compact  CAS manifold, namely a compact Hermitian manifold whose Chern connection has parallel torsion and curvature. Then it is either K\"ahler (hence a locally Hermitian symmetric space) or Chern flat.
\end{theorem}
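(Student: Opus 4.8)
\smallskip
\noindent\textbf{Proof proposal.} If $g$ is K\"ahler, then the Chern connection coincides with the Levi-Civita connection and the parallel curvature hypothesis makes $M$ a locally Hermitian symmetric space, so I may assume $g$ is not K\"ahler. Since every CAS manifold is Chern K\"ahler-like and hence, by \cite[Theorem 3]{YZ}, balanced, the Chern torsion $T$ is a nonzero parallel tensor whose trace (the Lee form) vanishes and whose norm is a positive constant. It therefore suffices to prove that the Chern curvature $R$ vanishes identically, i.e.\ that $M$ is Chern flat.

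The approach is to reduce the statement to the structure theory of CAS manifolds. Passing to the (complete) universal cover $\widetilde M$, Sekigawa's theorem \cite{Sekigawa} shows that $\widetilde M$ is a homogeneous Hermitian manifold; I would then use a de~Rham--type splitting of the Chern holonomy representation on $T^{1,0}_p\widetilde M$ to write $\widetilde M$ as a Hermitian and Riemannian product $\widetilde M = G_0\times N_1\times\cdots\times N_r$, where $G_0$ is a simply-connected complex Lie group with a left-invariant metric (the Chern-flat factor, on which the holonomy acts trivially; this uses Boothby's theorem \cite{Boothby}) and each $N_i$ is an irreducible Hermitian \emph{locally symmetric} space, hence K\"ahler. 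The content of this step is that parallel torsion together with the K\"ahler-like symmetry genuinely force this product: a priori the holonomy-invariant tensor $T\in\mathrm{Hom}(\Lambda^2 T^{1,0},T^{1,0})$ could have a component in $\mathrm{Hom}(V_0\otimes V_i,V_i)$ coupling the flat directions to a curved factor, and a curved irreducible factor need not a priori be symmetric. The coupling component should be eliminated by the second Bianchi identity, which for parallel torsion and curvature reduces to the purely algebraic relation $\sum_{\mathrm{cyc}}R(T(X,Y),Z)=0$; a torsion-free irreducible factor with parallel curvature is locally symmetric by the classical argument; and an irreducible factor carrying a \emph{nonzero} parallel torsion requires $\mathrm{Hom}(\Lambda^2 V_i,V_i)$ to admit a holonomy invariant, which in complex dimension $\le 3$ occurs only for $\mathfrak{hol}=\mathfrak{so}(3)$ acting on $\mathbb C^3$ (the cross-product torsion), a case I would exclude by a direct examination of the Bianchi identities forcing its curvature to vanish.

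Granting this decomposition, dimension counting finishes the proof. Since $\dim_{\mathbb C}\widetilde M=3$ we have $\dim G_0+\sum_i\dim N_i=3$. If $\dim G_0=3$, then $\widetilde M=G_0$ is Chern flat and so is $M$. If $\dim G_0\le 2$, then $G_0$ is abelian: compactness of $M$ makes $\pi_1(M)$ a cocompact lattice in the holomorphic isometry group of $\widetilde M$, which splits off the factor $\mathrm{Isom}^{\mathrm{hol}}(G_0)$; a Lie group containing a lattice is unimodular, $\mathrm{Isom}^{\mathrm{hol}}(G_0)$ is unimodular only if $G_0$ is, and the only simply-connected unimodular complex Lie groups of dimension $1$ or $2$ are $\mathbb C$ and $\mathbb C^2$. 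Hence $G_0=\mathbb C^{\dim G_0}$ is flat K\"ahler, $\widetilde M$ is a product of K\"ahler factors and so is K\"ahler, $M$ is K\"ahler, and being a quotient of a Hermitian symmetric space $M$ is locally Hermitian symmetric. The dichotomy $\dim G_0=3$ versus $\dim G_0\le 2$ is exactly the assertion of the theorem.

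I expect the hard part to be the second paragraph: establishing the product structure of CAS manifolds and, within it, killing the torsion coupling between the flat and curved directions and disposing of the exotic $\mathfrak{so}(3)$-holonomy case in complex dimension three. Once this structural input is in hand, the remaining ingredients — the representation-theoretic determination of which holonomies admit an invariant torsion, the short list of low-dimensional unimodular complex Lie groups, and the dimension count above — are routine.
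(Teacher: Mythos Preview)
Your product decomposition in the second paragraph fails, and the obstruction is not the one you identified. You worry about a coupling in $\mathrm{Hom}(V_0\otimes V_i,V_i)$, but the second Bianchi relation $\sum_r T^r_{im}R_{r\bar jk\bar\ell}=0$ already forces the image $\mathcal W$ of $T$ into the kernel of $R$, so torsion values always lie in the flat block $V_0$; that coupling vanishes for free. What you have \emph{not} accounted for is the coupling in $\mathrm{Hom}(\Lambda^2 V_i,V_0)$: torsion of two curved directions landing in the flat direction. In the non-K\"ahler $3$-dimensional case with $\dim\mathcal W=1$ (the main case), this is precisely where all the torsion lives---the only nonzero component is $T^3_{12}$ with $e_1,e_2\in\mathcal N$ and $e_3\in\mathcal W$---and it destroys your Riemannian product, because $[e_1,e_2]=\nabla_{e_1}e_2-\nabla_{e_2}e_1-T(e_1,e_2)$ has nonzero $\mathcal W$-component, so $\mathcal N=\mathcal W^\perp$ is \emph{not integrable}. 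There is no splitting $\widetilde M=G_0\times N$; the flat and curved directions are genuinely twisted. Your $\mathfrak{so}(3)$ case is a red herring too: that lives in $\mathrm{Hom}(\Lambda^2 V_i,V_i)$ with $\dim V_i=3$, whereas here $\dim V_i=2$ and the invariant sits in $\Lambda^2 V_i\cong\mathbb C$, which is holonomy-fixed exactly when the holonomy on $V_i$ lies in $\mathsf{SU}(2)$.

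The paper proceeds by direct computation instead. From $\nabla T=0$ one gets $d\log T^3_{12}=\theta_{11}+\theta_{22}-\theta_{33}$, and since $\mathcal W$ is flat $d\theta_{33}=0$, yielding $\mathrm{tr}\,\Theta=0$ (Chern Ricci-flatness, i.e.\ holonomy in $\mathsf{SU}(2)$ on $\mathcal N$). This by itself does \emph{not} force $R=0$; the remaining step is to choose $e_1$ realizing the maximum holomorphic sectional curvature, reduce the six curvature components on $\mathcal N$ to two constants $H$ and $|D|$ via Ricci-flatness, and then apply $\nabla R=0$ to $R_{1\bar21\bar2}$ and $R_{1\bar11\bar2}$ to obtain relations among $\theta_{11},\theta_{22},\theta_{12},\theta_{21}$ that collapse $\Theta$ to zero. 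This explicit curvature analysis is the actual content of the theorem, and your structural outline does not provide a replacement for it.
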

Three dimensional CAS examples include the Iwasawa manifolds.
We remark that there exist plenty  of complex  structure on the (complex) three dimensional simply connected manifold  $\mathbb{S}^3\times\mathbb{S}^3$ \cite{CE}. It in fact can be made complex homogenous, and it admits many homogenous Hermitian structure via averaging,  since the compact group $\mathsf{U}(2)\times \mathsf{U}(2)$ acts on it holomorphically and transitively. The above theorem asserts that the AS connection of the associated homogenous Hermitian structure can not coincide with the Chern connection since it is well known that this complex manifold can not admit a K\"ahler structure and any compact complex Lie group must be a complex torus.

\begin{theorem}\label{thm-dim4}
Let $(M^4,g)$ be a CAS manifold, namely a compact Hermitian manifold whose Chern connection has parallel torsion and curvature. Then it is either K\"ahler (hence a locally Hermitian symmetric space), or Chern flat, or the universal cover is holomorphically isometric to the product $C\times G$, where $C$ is a complex space form of dimension $1$ and $G$ a complex Lie group of dimension $3$ (with left invariant metric).
\end{theorem}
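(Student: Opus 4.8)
\emph{Proof proposal.}
The plan is to derive Theorem~\ref{thm-dim4} from the general structure theory of CAS manifolds, reducing it in complex dimension four to a dimension count. Let $(M^4,g)$ be a compact CAS manifold. Since its Chern connection is Ambrose-Singer, $M$ is Hermitian locally homogeneous, so by Sekigawa's theorem \cite{Sekigawa} the universal cover $(\widetilde M,\widetilde g)$ is a complete, simply connected homogeneous Hermitian manifold; and since every CAS manifold is Chern K\"ahler-like, $M$ is balanced by \cite[Theorem 3]{YZ}, equivalently the torsion (Lee) $1$-form of the Chern connection vanishes. If $g$ is K\"ahler we are in the first alternative, so assume from now on that the Chern torsion $T\not\equiv 0$.

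The central input I would use is the structural fact that $(\widetilde M,\widetilde g)$ is holomorphically isometric to a Riemannian product $N\times G$, where $N$ is a simply connected Hermitian symmetric space and $G$ is a simply connected complex Lie group with a left invariant metric (so that the $G$-factor is Chern flat). To prove this I would analyse the restricted holonomy group $H\subseteq\mathsf U(n)$ of the Chern connection: because $T$ and $R$ are $\nabla$-parallel, $H$ preserves them, and the orthogonal splitting of $T^{1,0}_pM$ into the subspace fixed by $H$ and its orthogonal complement is $\nabla$-parallel, hence integrates to two complementary holomorphic $\nabla$-totally geodesic foliations of $\widetilde M$. Since the Chern curvature operators lie in the holonomy algebra, which kills the fixed subspace, the corresponding factor is Chern flat; being simply connected and complete it is a complex Lie group with left invariant metric by the universal-cover form of Boothby's theorem \cite{Boothby}. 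On the complementary factor one shows that the Chern torsion vanishes, so there the Chern connection is the Levi-Civita connection and the parallel curvature yields a Hermitian symmetric space. The two delicate points are (i) that the torsion really does decouple, so that the ``symmetric'' factor is torsion-free and not merely of restricted holonomy, and (ii) that $\widetilde M$ splits as an honest holomorphic Riemannian product rather than a twisted one; both use the first Bianchi identity for the Chern connection together with $\nabla T=0$, $\nabla R=0$, the Chern K\"ahler-like symmetries, and the homogeneity of $\widetilde M$. I expect proving this splitting --- in effect the general structure theorem --- to be the main obstacle; after it is in hand, only bookkeeping remains.

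Granting $\widetilde M\cong N\times G$ and keeping in mind that $T\not\equiv 0$, put $b=\dim_{\mathbb C}G$, so $\dim_{\mathbb C}N=4-b$. The Chern connection of a Riemannian product is the product connection, so its torsion trace equals that of the $G$-factor; for a complex Lie group with left invariant metric the Chern torsion is, up to sign, the Lie bracket, with trace $X\mapsto\mathrm{tr}(\mathrm{ad}_X)$, so the balancedness of $M$ forces $G$ to be unimodular. A unimodular complex Lie group of complex dimension at most $2$ is abelian, since the only non-abelian two dimensional complex Lie algebra, $[e_1,e_2]=e_2$, is non-unimodular. Hence if $b\le 2$ then $G\cong\mathbb C^{\,b}$ is flat and $\widetilde M=N\times\mathbb C^{\,b}$ is K\"ahler, so $T\equiv 0$, contrary to assumption; thus $b\in\{3,4\}$. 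If $b=4$ then $N$ is a point and $M$ is Chern flat. If $b=3$ then $N$ is a simply connected one dimensional Hermitian symmetric space, hence a one dimensional complex space form $C$ (in dimension one the Chern connection is Levi-Civita and parallel curvature means constant holomorphic sectional curvature), and $\widetilde M\cong C\times G$ with $G$ a three dimensional complex Lie group carrying a left invariant metric --- precisely the remaining alternative. Together with the K\"ahler case this is the claimed trichotomy; beyond the structure theorem the only non-routine ingredient is the unimodularity observation.
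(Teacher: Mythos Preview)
Your argument rests on a ``general structure theorem'' asserting that the universal cover of any CAS manifold splits holomorphically and isometrically as $N\times G$ with $N$ Hermitian symmetric and $G$ a complex Lie group. This is not established in the paper, and in fact the paper explicitly does \emph{not} know whether it holds in dimension $\geq 5$: Theorem~\ref{thm-dim5} leaves open the possibility of an irreducible CAS fivefold with $\dim\mathcal{W}=1$ whose Chern curvature is non-flat. So you are assuming precisely what Theorems~\ref{thm-dim3} and~\ref{thm-dim4} are proving (in low dimensions), and your dimension count afterward is essentially vacuous.

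Your sketch of the structure theorem breaks at the point you yourself flag as ``delicate point (i).'' With $\mathcal{F}\subseteq T^{1,0}_pM$ the holonomy-fixed subspace, it is true that $\mathcal{F}$ is $\nabla$-parallel and contains the image distribution $\mathcal{W}$ of $T$ (since $\mathcal{W}$ lies in the kernel of $R$ by~(\ref{eq:TR})). But for $X,Y\in\mathcal{F}^{\perp}$ one has $[X,Y]=\nabla_XY-\nabla_YX-T(X,Y)$, where $\nabla_XY,\nabla_YX\in\mathcal{F}^{\perp}$ while $T(X,Y)\in\mathcal{W}\subseteq\mathcal{F}$ is in general \emph{nonzero}. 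Hence $\mathcal{F}^{\perp}$ need not be integrable, and even if it were, its leaves would not be torsion-free: the first Bianchi identity and the K\"ahler-like symmetries do not force $T|_{\mathcal{F}^{\perp}\times\mathcal{F}^{\perp}}=0$. The five-dimensional Case~3 in \S5 is exactly this situation: $\mathcal{F}^{\perp}=\mathcal{N}\cong\mathbb{C}^4$ carries a nondegenerate parallel $2$-form $T^5|_{\mathcal{N}\times\mathcal{N}}$, so the torsion does \emph{not} decouple, and no product splitting is obtained.

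The paper's actual proof of Theorem~\ref{thm-dim4} proceeds differently. After invoking Theorem~\ref{thm-N0} to split off the K\"ahler de Rham factor $\mathcal{N}_0$ and Theorem~\ref{thm-imagecodim1} to handle $\dim\mathcal{W}=n-1$, one is reduced to $\mathcal{N}_0=0$ and $\dim\mathcal{W}\in\{1,2\}$. For $\dim\mathcal{W}=1$, the skew form $T^4|_{\mathcal{N}}$ on $\mathcal{N}\cong\mathbb{C}^3$ has odd rank and hence a nontrivial kernel, contradicting $\mathcal{N}_0=0$. For $\dim\mathcal{W}=2$, one constructs parallel $(1,0)$-forms $\psi(\cdot)=\langle T(\cdot,W),\overline{Z}\rangle$ for parallel $W,Z\in\mathcal{W}$ (Lemma~\ref{lemma:41}); each nonzero such $\psi$ produces a parallel direction in $\mathcal{N}$, and a short case analysis forces either Chern flatness or $T^4_{\ast\ast}=0$, contradicting $e_4\in\mathcal{W}$. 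This hands-on analysis of the torsion components is what your proposal bypasses, and it is not recoverable from the holonomy splitting alone.
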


Similarly, the above result asserts that the complex homogenous manifold $\mathbb{S}^3\times \mathbb{S}^5$ could not admit a CAS structure.
In this paper we call a CAS manifold {\em trivial} if each of its de Rham factor of its universal cover is either K\"ahler (hence a Hermitian symmetric space) or Chern flat (namely a complex Lie group with a left-invariant Hermitian metric). The above two theorems implies that in dimension $n\leq 4$, CAS manifolds are all trivial.

For general dimensions, we give a characterization of when a CAS manifold  splits off K\"ahler de Rham factors (Theorem \ref{thm-N0} in \S 3), and generalize the results in dimensions $3$ - $5$ to higher dimensions in terms of the codimension of the {\em image distribution} of the Chern torsion (Theorem \ref{thm-imagecodim} in \S 6). Here the de Rham factor means the product factor splitting on the universal cover induced by the holonomy principle.  We will also prove the following:

\begin{theorem} \label{thm-Ricciflat}
Let $(M^n,g)$ be a Hermitian manifold  with a  CAS structure. Assume further that it is without any K\"ahler de Rham factor. Then the universal cover of $M$ admits a parallel holomorphic $(n, 0)$ form, and $(M,g)$ has zero Chern Ricci curvature. In particular, the action of any restricted holonomy group element $h$ (with respect to the Chern connection) on $T^{1,0}M$ splits as $\tilde{h}\oplus \operatorname{id}$ with  $\tilde{h}\in \mathsf{Sp}(k)$ for some $1\le k\le[(n-1)/2]$, with $k\ge 2$.
\end{theorem}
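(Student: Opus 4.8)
The plan is to reduce the first two assertions to a single statement about the restricted Chern holonomy, and then to extract that statement from the algebraic structure that the Ambrose--Singer conditions impose on the torsion. The Chern connection induces a connection on the canonical bundle $K_M=\Lambda^{n,0}M$ whose curvature is, up to a constant, the first Chern--Ricci form and whose holonomy is the determinant of the Chern holonomy. Hence the Chern--Ricci curvature of $M$ vanishes identically iff $K_M$ is flat iff the restricted Chern holonomy lies in $\mathsf{SU}(n)$; and in that case, on the simply connected universal cover $\widetilde M$, the flat bundle $K_{\widetilde M}$ is holomorphically trivial and has a $\nabla$-parallel section, which is automatically $\overline{\partial}$-closed, hence a nowhere vanishing parallel holomorphic $(n,0)$ form. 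Since $\mathsf{Sp}(k)\subseteq\mathsf{SU}(2k)$, all three conclusions will follow once we produce the claimed splitting $\widetilde h\oplus\operatorname{id}$ with $\widetilde h$ valued in a product of symplectic groups. Write $T^{1,0}\widetilde M=F\oplus E$ for the $\mathfrak h$-invariant orthogonal splitting, $F$ the subspace fixed by the restricted holonomy algebra $\mathfrak h$ and $E=F^\perp$ the sum of the nontrivial irreducible subrepresentations; the task is then to exhibit an $\mathfrak h$-invariant holomorphic symplectic form on $E$.

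Next I would read off the structure of $T$ and $R$. Because $\nabla T=0$ and $\nabla R=0$, the first and second Bianchi identities of the Chern connection become $\mathfrak{S}\,R(X,Y)Z=\mathfrak{S}\,T(T(X,Y),Z)$ and $\mathfrak{S}\,R(T(X,Y),Z)=0$, where $\mathfrak{S}$ denotes the cyclic sum over $X,Y,Z$. Using that the Chern curvature is of type $(1,1)$, the first identity says both that $(T^{1,0}\widetilde M,-T)$ is a complex Lie algebra $\mathfrak g$ and that $R$ obeys the K\"ahler symmetries (already known), while the second says $R(U,\cdot)\equiv0$ for every $U$ in the derived ideal $\mathfrak g'=\operatorname{im}T$. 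Since the restricted holonomy is generated by the curvature operators and respects the K\"ahler symmetries, $R(U,\cdot)\equiv0$ is equivalent to $U$ being holonomy-fixed; thus $\mathfrak g'\subseteq F$. Moreover $\nabla T=0$ makes $\mathfrak h$ act on $\mathfrak g$ by skew-Hermitian derivations, and equivariance together with the fact that $E$ has no trivial subrepresentation forces $T(E,F)=0$ and $T(E,E)\subseteq\mathfrak g'\subseteq F$. Finally, by Theorem~\ref{thm-N0} the hypothesis that $M$ has no K\"ahler de Rham factor is equivalent to the absence of a nonzero $X\in T^{1,0}\widetilde M$ with $T(X,\cdot)=0$ orthogonal to $\mathfrak g'$; in particular no nonzero vector of $E$ is annihilated by the torsion.

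The heart of the argument is then to produce the symplectic form. The map $T|_{\Lambda^2E}\colon\Lambda^2E\to F$ is $\mathfrak h$-equivariant and lands in the trivial module $F$; composing with linear functionals on $F$ yields a family of $\mathfrak h$-invariant holomorphic $2$-forms on $E$ whose common kernel is $\{X\in E:T(X,\cdot)=0\}=0$. I must upgrade this to a single \emph{nondegenerate} invariant form. The route I would take is to decompose $E$ into the tangent spaces of the (necessarily non-K\"ahler) de Rham factors supplied by Theorem~\ref{thm-N0} and argue factor by factor: on such a factor the space of invariant $2$-forms is as small as possible and the image of $T$ is correspondingly a single line in $F$, and the resulting invariant $2$-form must be nondegenerate, since its kernel would otherwise be a proper invariant subspace annihilated by the torsion, contradicting both irreducibility of the factor and the no-K\"ahler-factor hypothesis. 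This gives $\mathfrak h|_E\subseteq\bigoplus_a\mathfrak{sp}(k_a)$ with $\dim_{\mathbb C}E=\sum_a2k_a$. Passing from a family of invariant $2$-forms with trivial common kernel to a genuinely nondegenerate one — i.e.\ ruling out invariant null directions of the torsion — is the step I expect to be the main obstacle, and it is exactly there that the no-K\"ahler-factor hypothesis and the irreducible decomposition must be used in an essential way.

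It remains to collect the consequences. The restricted holonomy now acts as $\bigoplus_a\mathsf{Sp}(k_a)$ on $E$ and trivially on $F$, hence inside $\mathsf{SU}(n)$, which by the first step yields the parallel holomorphic $(n,0)$ form on $\widetilde M$ and the vanishing of the Chern--Ricci curvature of $M$. If $F=0$ then $\mathfrak g'=0$, so $\mathfrak g$ is abelian and $T\equiv0$, i.e.\ $g$ is K\"ahler and $\widetilde M$ is itself a K\"ahler de Rham factor, contrary to hypothesis; hence $F\neq0$, so $\sum_a2k_a=\dim_{\mathbb C}E\le n-1$ and each $k_a\le[(n-1)/2]$. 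And no $k_a$ equals $1$: an $\mathsf{Sp}(1)=\mathsf{SU}(2)$ summand would make the corresponding $E_a$ a complex surface carrying a parallel, K\"ahler-like, Chern--Ricci-flat curvature tensor, which on a complex surface is forced to vanish, contradicting that $\mathfrak h|_{E_a}$ is generated by that curvature and is nonzero — equivalently, such a summand would be a Calabi--Yau surface, hence K\"ahler, de Rham factor, which is excluded. Therefore every $k_a\ge2$, which completes the proof.
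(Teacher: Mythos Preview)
Your overall architecture is sound and in some ways cleaner than the paper's: working with the holonomy-fixed subspace $F$ rather than the torsion image $\mathcal W\subseteq F$, and deducing the parallel volume form and Ricci-flatness from $\mathrm{Hol}^0\subset\mathsf{SU}(n)$, is correct. The deduction of $T(E,F)=0$ is correct as stated (for fixed $Z\in F$ the map $X\mapsto T(X,Z)$ is $\mathfrak h$-equivariant from a module with no trivial summand into a trivial one), and with it the observation that no nonzero vector of $E$ is torsion-annihilated.

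The gap is exactly where you locate it, but the route you sketch through it does not work. Theorem~\ref{thm-N0} only peels off the K\"ahler piece $\mathcal N_0$; it gives no further decomposition of the non-K\"ahler part, and even on an irreducible de Rham factor the Chern-holonomy module $E$ need not be irreducible, so ``the space of invariant $2$-forms is as small as possible'' is unjustified. More seriously, a family of $\mathfrak h$-invariant skew forms on $E$ with trivial \emph{common} kernel need not contain (or span) a nondegenerate member: if $E\cong V^{\oplus 3}$ with $V$ irreducible of orthogonal type, the invariant $2$-forms are parametrised by $\Lambda^2\mathbb C^3$, every one of them is degenerate, yet three of them already have trivial common kernel. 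Your sentence ``its kernel would otherwise be a proper invariant subspace annihilated by the torsion'' silently passes from the kernel of a single form to the common kernel of the whole family.

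What actually closes the gap is a maximal-rank argument, and it works verbatim in your $E$/$F$ setup. Choose $Z_0\in F$ with $A^{Z_0}:=\langle T(\cdot,\cdot),\bar Z_0\rangle|_E$ of maximal rank, and split $E=E_1\oplus E_2$ with $E_2=\ker A^{Z_0}$. For every $Z\in F$ and every $t\in\mathbb C$ the form $A^{tZ_0+Z}|_E$ has rank no larger; playing this against the nondegenerate $E_1$-block of $tA^{Z_0}$ as $t$ varies forces both the off-diagonal and the $E_2\times E_2$ blocks of every $A^Z$ to vanish. Hence each $X\in E_2$ satisfies $T(X,E)=0$, and since $T(E,F)=0$ already, $T(X,\cdot)=0$, so $X\in\mathcal N_0=0$ and $E_2=0$. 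Thus $A^{Z_0}$ is the nondegenerate $\mathfrak h$-invariant symplectic form you need. This is precisely the mechanism in the paper's proof, run there on $\mathcal N$ rather than on $E$: the kernel $\mathcal N_2$ of the maximal $A^n$ is shown to admit a parallel frame via the vectors $X^\alpha_\beta=\sum_{i'}\overline{T^\alpha_{i'\beta}}\,e_{i'}$ (built from $T(\mathcal N_2,\mathcal W)$, a piece of the torsion that is invisible once you restrict to $E$), and the parallel $(n,0)$-form is then written down explicitly as $(\tau^n)^k\wedge\varphi_{2k+1}\wedge\cdots\wedge\varphi_n$.

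A smaller point: your argument for $k\ge 2$ asserts that a $2$-dimensional block $E_a$ would be ``a Calabi--Yau surface, hence K\"ahler, de Rham factor''. It is not a de Rham factor on its own --- the torsion $T(E_a,E_a)$ lands in $F$, not in $E_a$ --- so that phrasing is off. The honest argument is the one used in the paper (and in the proof of Theorem~\ref{thm-dim3}): a K\"ahler-symmetric, parallel, Ricci-flat curvature tensor supported on a $2$-dimensional block must vanish, which you can check directly from $\nabla R=0$ as done there.
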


An interesting consequence of the above result is  that Hermitian manifolds with a CAS structure and without any K\"ahler de Rham factor can be viewed as generalized Calabi-Yau manifolds. As a consequence, we have

\begin{corollary} \label{cor-homog}
Let $(M^n,g)$ be a CAS manifold. If $M^n$ is a compact homogeneous (or almost homogeneous) complex manifold, then $(M^n,g)$ is trivial. In particular, its universal cover is holomorphically isometric to the product $G\times N^{k}$, with $0\leq k\leq n $, where $G$ is a (connected, simply-connected) complex Lie group equipped with a left-invariant metric, and $N^k$ is a compact Hermitian symmetric space of dimension $k$.
\end{corollary}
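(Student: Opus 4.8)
The plan is to combine the structure results already established with the classical theory of compact homogeneous complex manifolds and a Bochner‑type vanishing for holomorphic vector fields.

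\emph{Step 1: reduction to the case without a K\"ahler de Rham factor.} By Theorem~\ref{thm-N0} the universal cover of a CAS manifold splits holomorphically and isometrically as the product of all its K\"ahler de Rham factors (which, being K\"ahler and CAS, are locally Hermitian symmetric) with a CAS factor having no K\"ahler de Rham factor. Writing the Hermitian symmetric part as $N_{+}\times\mathbb{C}^{m}\times N_{-}$ according to its de Rham type (compact/flat/noncompact), each $N_{+}$ is compact and simply connected, so the deck group acts trivially on it; hence, after a finite cover, $M$ is holomorphically $N_{+}\times M'$ with $M'$ again compact and (almost) homogeneous and carrying a CAS structure (here one uses that $N_{+}$ is Fano while $M'$ has numerically trivial canonical bundle, so $\operatorname{Aut}^{0}$ splits as a product and an open orbit on $M$ induces one on $M'$). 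The noncompact factor $N_{-}$ cannot occur: a compact locally Hermitian symmetric space of noncompact type is Kobayashi hyperbolic and has finite automorphism group, so $\operatorname{Aut}^{0}$ acts trivially in that direction and no open orbit can be transverse to it. Thus we may assume $M$ itself is compact, (almost) homogeneous, CAS, and without K\"ahler de Rham factor; by Theorem~\ref{thm-Ricciflat} it is then generalized Calabi--Yau: balanced, with zero Chern Ricci curvature, and with a parallel holomorphic volume form on the universal cover, and each restricted Chern holonomy element acts as $\tilde h\oplus\operatorname{id}$ with $\tilde h\in\mathsf{Sp}(k)$.

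\emph{Step 2: homogeneity forces Chern flatness.} Because $M$ is (almost) homogeneous, the holomorphic vector fields coming from $\mathfrak{aut}(M)$ are globally defined on $M$ and span $T^{1,0}M$ at every point of a dense open set. The key point is a Bochner/Weitzenb\"ock vanishing for the Chern connection: on the compact, balanced, Chern Ricci flat manifold $M$, every global holomorphic vector field is $\nabla^{c}$-parallel. (The balanced condition kills the torsion terms in the Hermitian Bochner formula, while the vanishing of the Chern Ricci curvature -- equivalently the presence of the parallel holomorphic volume form together with the K\"ahler-like symmetries of $R$ and the $\mathsf{Sp}(k)\oplus\operatorname{id}$ holonomy from Theorem~\ref{thm-Ricciflat} -- kills the curvature term.) Consequently $T^{1,0}M$ admits local $\nabla^{c}$-parallel frames, so the Chern curvature vanishes and $M$ is Chern flat; by Boothby's theorem \cite{Boothby} it is a quotient of a complex Lie group by a discrete subgroup, with a left-invariant metric. (An equivalent route: by the Tits/Grauert--Remmert structure of compact homogeneous complex manifolds \cite{Tits,GR}, $M$ fibers over a flag manifold with complex-parallelizable fiber; triviality of the canonical bundle forces the Fano base to be a point, so $M$ is complex parallelizable, and one runs the same Bochner argument on a global holomorphic frame. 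Theorem~\ref{thm-imagecodim} gives yet another route once one checks that homogeneity makes the image distribution of the Chern torsion fill out the whole tangent bundle of the non-K\"ahler part.)

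\emph{Step 3: assembly, and the almost homogeneous case.} Reassembling, the universal cover is $\tilde M = N_{+}\times G$, where $G$ is the product of $\mathbb{C}^{m}$ with the Chern-flat complex Lie group produced in Step~2 -- hence itself a connected, simply connected complex Lie group with a left-invariant metric -- and $N^{k}:=N_{+}$ is a compact Hermitian symmetric space, $0\le k\le n$. Every de Rham factor of $\tilde M$ is therefore either K\"ahler (so a Hermitian symmetric space) or Chern flat (so a complex Lie group with left-invariant metric), i.e.\ $(M,g)$ is trivial, which is the assertion. In the almost homogeneous case the same argument applies verbatim: the holomorphic vector fields from $\mathfrak{aut}(M)$ are still globally defined and span $T^{1,0}M$ on a dense open set, so the parallelism of these fields on that set -- hence the vanishing of the Chern curvature -- propagates by continuity to all of $M$; the only modification is that the Tits fibration in the alternative route must be replaced by the structure theory of compact almost homogeneous complex manifolds.

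\emph{Main obstacle.} The crux is the Bochner vanishing of Step~2 -- proving that holomorphic vector fields on a compact, balanced, generalized Calabi--Yau Hermitian manifold are parallel for the Chern connection -- which is exactly where the CAS hypothesis (through the consequences of Theorem~\ref{thm-Ricciflat}) and balancedness must be used jointly. A secondary difficulty is the bookkeeping that excises the noncompact Hermitian symmetric factor and handles the automorphism group of a product, and, in the almost homogeneous setting, the replacement of the Tits structure by that of almost homogeneous complex manifolds.
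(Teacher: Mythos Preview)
Your Step~2 has the right core idea (a Bochner argument for the Chern connection), and that is also what the paper uses. The differences, and the gaps, are all in Step~1.

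In Step~1 you try to peel off the compact Hermitian symmetric part $N_{+}$ as an honest factor of (a finite cover of) $M$, then argue that the remaining $M'$ is still (almost) homogeneous, and you dispatch the noncompact factor $N_{-}$ by a separate Kobayashi--hyperbolicity argument. Each of these moves is sketchy: the de~Rham splitting lives on $\widetilde{M}$, and the deck group need not act as a product, so getting $M\cong N_{+}\times M'$ even after a finite cover requires justification; your claim that $\mathrm{Aut}^{0}$ of such a product splits (``$N_{+}$ is Fano while $M'$ has numerically trivial canonical bundle'') is asserted, not proved, and presupposes that $M'$ already has trivial canonical bundle (i.e.\ that $N_{-}$ has already been excluded); and your hyperbolicity argument for $N_{-}$ again presumes a compact factor of $M$ of noncompact type to exist, which is the very splitting in question. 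So Step~1 as written is circular.

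The paper sidesteps all of this by never splitting $M$. On $M$ itself one has the $\nabla$-parallel holomorphic decomposition $T^{1,0}M=T_1\oplus T_2\oplus T_3$ coming from the holonomy (non-K\"ahler part, noncompact Hermitian symmetric part, compact Hermitian symmetric part). Because a CAS metric is Chern K\"ahler-like, $Ric^{(1)}=Ric^{(2)}$; Theorem~\ref{thm-Ricciflat} gives $Ric\equiv 0$ on $T_1$, and $Ric<0$ on $T_2$ by definition, so $Ric^{(2)}\le 0$ on the parallel holomorphic subbundle $T_1\oplus T_2$. The Bochner identity $\Delta|Z|^2=|\nabla Z|^2-Ric^{(2)}_{Z\bar Z}$ (valid for any holomorphic section $Z$ of $T_1\oplus T_2$) together with the maximum principle on compact $M$ forces $\nabla Z=0$. (Almost) homogeneity then gives enough such $Z$ to frame $T_1\oplus T_2$ at a generic, hence every, point; so $T_1\oplus T_2$ is flat, which simultaneously kills $N_{-}$ and shows $N_1$ is Chern flat. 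No product splitting of $M$, no $\mathrm{Aut}^0$-of-a-product argument, and no separate hyperbolicity step are needed.

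Two smaller points. The Bochner formula above has no torsion terms (it is simply the standard identity for a holomorphic section of a Hermitian holomorphic bundle), and one concludes via the maximum principle, not by integration; so your remark that ``the balanced condition kills the torsion terms'' is not what is going on. And the $\mathsf{Sp}(k)\oplus\mathrm{id}$ holonomy statement from Theorem~\ref{thm-Ricciflat} is irrelevant here---only the Chern Ricci flatness of the non-K\"ahler part is used.
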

Recall that a compact homogeneous (almost homogeneous) complex manifold is one where the group of automorphisms (biholomorphisms), which is a complex Lie group, acts transitively (or having an open orbit).

The corollary asserts that none of the simply-connected  Calabi-Eckmann manifolds $\mathbb{S}^{2k+1}\times \mathbb{S}^{2l+1}$, with $k, l\ge 1$, which are complex homogenous spaces,  can admit a CAS structure.

In the mean time, in dimension 5, we provide in    Theorem \ref{thm-dim5}  a structural characterization to any possibly non-trivial example in dimension $5$. It is our hope that this result is instrumental in constructing non trivial examples starting dimension 5. Such a fivefold, if exists, should be a complex $1$-torus bundle over a holomorphic symplectic $4$-manifold. This will give an interesting non K\"ahler generalization of Calabi-Yau manifolds, a subject which has attracted attentions due to considerations in mathematical physics \cite{TY}.  Please refer to \cite{QW} for a more recent related construction and references in these papers for more comprehensive coverage on the subject.  This example, if exists,  also shows that CAS class is strictly larger  than the Chern flat ones. On the other hand, if all CAS manifolds whose universal cover does not contain any K\"ahler factors must be Chern flat, it would also provide a nice characterization of CAS manifolds in high dimensions, although this assertion will reduce the scope of interests of CAS manifolds.



The article is organized as follows. In the next section, we  briefly recall the construction in Ambrose-Singer Theorem as well as its Hermitian version, Sekigawa Theorem. In \S 3, we  analyze the properties of CAS manifolds in general dimensions, and characterize K\"ahler de Rham factors of a CAS manifold (Theorem \ref{thm-N0}). Theorem \ref{thm-dim3} and \ref{thm-dim4} shall be proved in \S 4. In \S 5 we discuss the $5$-dimensional situation and prove Theorem \ref{thm-dim5}. In \S 6 we  partially generalize the statements of Theorems \ref{thm-dim3} and \ref{thm-dim4} into higher dimensions (Theorem \ref{thm-imagecodim}) and  prove Theorem \ref{thm-Ricciflat}. In \S 7 we  prove Corollary \ref{cor-homog} and have some general discussion of CAS versus homogeneity.

We should remark that on a homogeneous Riemannian manifold $M^n$,  there might be more than one Ambrose-Singer connections, namely the Ambrose-Singer connection  may not be unique. If we consider the set of all Ambrose-Singer connections on $M^n$, it is also not clear what kind of structure  this set possess. This is related to the fact that there might be multiple subgroups of the isometry group which act transitively on the manifold. One might ask {\it for  what kind of complete, simply-connected Riemannian (Hermitian) homogeneous manifolds is its metric (Hermitian) Ambrose-Singer connection  unique?}


\vspace{0.3cm}

\section{Ambrose-Singer and Sekigawa Theorems}

In this section, we recall the main result  and some arguments in Ambrose-Singer Theorem \cite{AS} and its Hermitian version \cite{Sekigawa}.

Let $(M^n,g)$ be a complete, simply-connected Riemannian manifold. Denote by $F(M)$ the bundle of orthogonal frames on $M^n$. It is a principal bundle with structure group $O(n)$. Let $\pi : F(M)\rightarrow M$ be the projection map. Any point $b\in F(M)$ is in the form $b=(x; \eps_1, \ldots , \eps_n)$, where $x=\pi (b) \in M$ and $\{ \eps_1, \ldots , \eps_n \}$  is an orthonormal basis of the tangent space $T_xM$. For  $b\in F(M)$, denote by $V_b$ the kernel of $d\pi$ at $b$, so $V$ is the vertical distribution of the bundle $\pi$.

The frame bundle $F(M)$ naturally admits a global tangent frame $\{ E_i, E_{jk}\}$, where $1\leq i\leq n$, $1\leq j<k\leq n$, so that $E_{jk} \in V$ and  $E_i(b)=\eps_i$. Using this natural frame to be an orthonormal frame, we get a Riemannian metric $\widehat{g}$ on $F(M)$, and $\pi$ becomes a Riemannian submersion.

Now if $H$ is a subgroup of the isometry group $I(M)$ such that $H$ acts transitively on $M$, then  for any $b=(x; \eps_1, \ldots , \eps_n) \in F(M)$, we have a smooth map
\begin{eqnarray*}
\Psi_b: && H \rightarrow F(M)\\
&& h \mapsto hb = (h(x); h_{\ast}\eps_1, \ldots , h_{\ast}\eps_n)
\end{eqnarray*}
Let $T_eH$ be the tangent space of the Lie group $H$ at its unit element $e$. Since $\Psi_b(e)=eb=b$, one can define distributions in $F(M)$ by
$$ P_b = d\Psi_b (T_eH), \ \ \ \ \ \ Q_b = P_b \cap \big(P_b\cap V_b \big)^{\perp} $$
where $\perp$ is with respect to the Levi-Civita connection of $\widehat{g}$ on $F(M)$.  Clearly, $Q$ is a subbundle of the tangent bundle of $F(M)$ such that $Q\oplus V = TF(M)$, that is, $Q$ gives a horizontal distribution of $\pi$. As is well known, metric connections on $M$ are in one one correspondence with horizontal distributions on $F(M)$, so the above $Q$ gives a metric connection $\nabla$ on $M$. It is proved in \cite{AS} that the connection is AS, namely, its torsion and curvature are parallel with respect to the connection itself.

Conversely, if we start with an AS metric connection $\nabla$, corresponding to a horizontal distribution $Q$ on $F(M)$. Then fix any $b_0\in F(M)$, and denote by $H\subseteq F(M)$ the subset of points in $F(M)$ that can be connected to $b_0$ by  piecewise smooth horizontal paths. The AS condition leads to the fact that $H$ is a Lie group, acting on $M$ transitively as isometries. This is the basic argument in \cite{AS}.

From the above argument, we see that AS connections on $M$ are in one to one correspondence with conjugacy classes of connected Lie subgroups $H$ of $I(M)$ which act transitively on $M$. So AS connections on $M$ may not be unique, and to understand the set of admissible AS connections, the concern is to determine the conjugacy classes of connected Lie subgroups of $I(M)$ that act transitively on $M$.

Note that amongst all the AS connections, the one corresponding to the identity component $I_0(M)$ is uniquely determined, and we will call it the {\em canonical AS connection} of $M$. It has `maximum symmetry', and it would certainly be interesting to study its geometric significance.  We will leave the exploration of the set of AS connections to a future project.

\vspace{0.1cm}

Now let us switch gear to consider the Hermitian case. Let $(M^n,g)$ be a complete, simply-connected Hermitian manifold, and denote by $J$ its almost complex structure. As before, we have the frame bundle $F(M)$ which is a principal $O(2n)$-bundle over $M$. Denote by $\widetilde{F}(M) \subseteq F(M)$ the subbundle of all unitary frames of $M$, namely, points in $F(M)$ in the form
$$ b= (x; \,\eps_1, \ldots , \eps_n, J\eps_1, \ldots , J\eps_n) \in F(M).  $$
$\widetilde{F}(M)$ is a $U(n)$-bundle over $M$. Following the argument of \cite{AS}, if $H$ is a group of holomorphic isometries acting transitively on $M$, then for $b\in \widetilde{F}(M)$ the map $\Psi_b$ clearly sends $H$ into $\widetilde{F}(M)$, so $Q$ constructed as before would be a horizontal distribution for $\widetilde{\pi}: \widetilde{F}(M) \rightarrow M$, thus corresponding to  a Hermitian connection on $M$.

Conversely, given a Hermitian AS connection, it corresponds to a horizontal distribution $Q$ in $\widetilde{F}(M)$. By letting $H\subseteq \widetilde{F}(M)$ be the set of all points that can be connected to a fixed point $b_0\in \widetilde{F}(M)$ by piecewise smooth horizontal paths, one gets a Lie group acting transitively on $M$ as isometries. The fact $H \subseteq \widetilde{F}(M)$ shows that the elements of $H$ also act holomorphically.

In summary, for a complete, simply-connected Hermitian manifold $M^n$, the Hermitian AS connections on $M$ are in one one correspondence with the conjugacy classes of connected Lie subgroups in $\widetilde{I}(M)$, where $\widetilde{I}(M)$ is the intersection of $I(M)$ with the group of automorphisms of $M$ (i.e., biholomorphisms from $M$ onto itself)

Again, the Hermitian AS connection corresponding to the identity component $\widetilde{I}_0(M)$ has `maximum symmetry' and is of particular interest. It is uniquely determined and will once again be called the {\em canonical Hermitian AS connection}.  The Hermitian analogue, namely the uniqueness of Hermitian AS connections, or more generally, the structure and property of the set of all Hermitian AS connections on a given (compact, locally) Hermitian homogeneous space should be a topic worthy of further exploration, and we intend to investigate it in the future.

We  remark that in \cite{AS} the authors worked with the tensor $S_XY:=\nabla_XY-\nabla^{LC}_X Y$ with $\nabla^{LC}$ being the Levi-Civita connection with respect to the Hermitian metric, and $\nabla$ being the Ambrose-Singer connection. It is easy to see that the torsion of $\nabla$, which we will denote by  $T(X, Y)$ or $T_{X, Y}$, can be expressed in $S$ as $S_X Y-S_Y X$. On the other hand, $T$ completely determines $S$ by the formula:
\begin{equation} \label{eq:ts}
\langle S_U V, W\rangle =\frac{1}{2}\left( \langle T(U, V), W\rangle+\langle T(W, U), V\rangle
-\langle T(V,W), U\rangle\right).
\end{equation}
Here we mainly work with the torsion $T$. It is easy to see that $T$ is  parallel  if and only $S$ is parallel. Hence $T$ is parallel with respect to an Ambrose-Singer connection. When $\nabla$ is the Chern connection it is also useful to note that  $A_X Y:=L_X Y- \nabla_X Y$ commutes with $J$ if $X$ is {\em real holomorphic} (namely $L_XJ=0$ or equivalently, $L_X\circ J = J \circ L_X$). Here and below $L_X$ stands for the Lie derivative. On a Riemannian manifold (with respect to the Levi-Civita connection) the operator $A_X$ plays a role as the `infinitesimally rotational part' of the one parameter family of isometric groups generated by a Killing vector field $X$ (cf. \cite{Kostant}). In a recent article \cite{Ust}, the so-called torsion twisted connection, which is defined as $\nabla^T_X Y\doteqdot \nabla_X Y -T(X, Y)$,  was found its relevance in the study of generalized flows on Hermitian manifolds. It is easy to see that $\nabla^T_X Y=-A_Y X$.

\section{Chern Ambrose-Singer manifolds}

In this section we will examine the general properties of CAS manifolds. Let  $(M^n,g)$ be a Hermitian manifold with $J$ being its almost complex structure and $g=\langle \, , \, \rangle$ denoting its metric, extended bilinearly over ${\mathbb C}$. Let $\nabla$ be the Chern connection. Its torsion and curvature are denoted by $T$ and $R$, respectively:
$$ T(x,y)=\nabla_xy - \nabla_yx - [x,y], \ \ \ R(x,y,z,w) = \langle R_{xy}z,\  w\rangle= \langle \nabla_x\nabla_yz -\nabla_y\nabla_xz -\nabla_{[x,y]}\,z,\  w\rangle $$
for any  tangent vector fields $x$, $y$, $z$, $w$ on $M$. We will extend $T$ and $R$ linearly over ${\mathbb C}$, and still denote them by the same letters. From now on, we will use $X$, $Y$, $Z$, $W$ to denote complex tangent vectors of type $(1,0)$, namely, $X=x-\sqrt{-1}Jx\,$ for some real tangent vector $x$. It is well known that the torsion and curvature of the Chern connection satisfy the following properties:
\begin{equation*}
T(X, \overline{Y})=0, \ \ \ \ \ \ \ R(X,Y,\ast , \ast) = R(\ast , \ast , Z, W)=0.
\end{equation*}
 Therefore the only possibly non-zero components of $R$ are $R(X, \overline{Y}, Z, \overline{W})$, which we will denote by $R_{X \overline{Y} Z \overline{W}}$ for convenience. Let $e=\{ e_1, \ldots , e_n\}$ be a local unitary frame of type $(1,0)$ tangent vectors. Let us denote the components of $T$ by
\begin{equation*}
T(e_i, e_k)= \sum_{j=1}^n  T_{ik}^j e_j.
\end{equation*}
Note that our $T^j_{ik}$ here is equal to twice of the $T^j_{ik}$ in \cite{YZ}.   For any metric connection, the first Bianchi identity (cf. Theorem 5.3 of Ch.\,III of \cite{KN}) takes the form
$$ {\mathfrak S}\{ (\nabla_xT)(y,z) - R_{xy}z - T(x, T(y,z)) \} = 0 ,$$
 where $x$, $y$, $z$ are tangent vectors and ${\mathfrak S}$ means the sum over all cyclic permutation of $x,y,z$. When applied to the special case of Chern connection $\nabla$, we get
\begin{eqnarray}
&& T^{\ell }_{ij,k} + T^{\ell }_{jk,i} + T^{\ell }_{ki,j} \,= \,\sum_r \{ T^r_{ij}T^{\ell}_{kr} + T^r_{jk}T^{\ell}_{ir} +  T^r_{ki}T^{\ell}_{jr} \}, \label{eq:B1} \\
&& R_{k\overline{j}i\overline{\ell}} - R_{i\overline{j}k\overline{\ell}} \ = \   T^{\ell}_{ik,\,\overline{j}}, \label{eq:B1bar}
\end{eqnarray}
for any $1\leq i,j,k,\ell \leq n$, where $e$ is an unitary frame and the index after comma stands for covariant derivative under $\nabla$. See also  \cite[Lemma 7]{YZ} for instance, and notice the change of the factor $2$ in $T^j_{ik}$.

For  any connection  the second Bianchi identity (cf. Theorem 5.3 of Ch.\,III of \cite{KN}) has  the form
 $$ {\mathfrak S}\{ (\nabla_xR)_{yz} + R_{T(x,y)\, z} \} = 0 ,$$
 where ${\mathfrak S}$ means the sum over all cyclic permutation of $x,y,z$, and when applied to the special case of Chern connection $\nabla$, we get
 \begin{equation} \label{eq:B2}
R_{i\overline{j}k\overline{\ell},\, m} - R_{m\overline{j}k\overline{\ell}, \,i} \,= \, \sum_{r=1}^n T^{r}_{im} R_{r\overline{j}k\overline{\ell}},
\end{equation}
 for any $1\leq i,j,k,\ell , m\leq n$, where the indices after comma again stand for covariant derivatives with respect to the Chern connection $\nabla$.

 Now assume that $(M^n,g)$ is CAS, namely, the Chern connection $\nabla$ enjoys $\nabla T=0$, $\nabla R=0$. By (\ref{eq:B1bar}), we know that $R_{k\overline{j}i\overline{\ell}} = R_{i\overline{j}k\overline{\ell}} $
for any indices. That is, the curvature tensor $R$ of $\nabla$ obeys all the symmetry conditions of curvature tensor of a K\"ahler metric.  Recall Gauduchon's torsion $1$-form $\eta$ (see \cite{Gauduchon1}) is defined by (again this is equal to twice of the notion $\eta$ in \cite{YZ}) $\eta = \sum_i \eta_i \varphi_i$, where $\varphi$ is the unitary coframe dual to $e$, and $
\eta_i = \sum_{k=1}^n T^k_{ki}.
$
Some direct calculation shows that
\begin{eqnarray}
\partial \,\omega^{n-1} & = &  - \,\eta \wedge \omega^{n-1}, \label{eq:ba1} \\
\partial \overline{\partial} \,\omega^{n-1} & = & (\overline{\partial} \,\eta + \eta \wedge \overline{\eta})\wedge \omega^{n-1}. \label{eq:ba2}
\end{eqnarray}
Here $\omega$ is the K\"ahler form of $(M^n,g)$. Let $T'=\langle T(X, Y), \bar{Z}\rangle $ be the $(2,1)$-form associated with the torsion tensor $T$. Under a local unitary frame $e$ and dual coframe $\varphi$ we have $T'=\sum_{i,j,k}T^j_{ik}\varphi_i\varphi_k\overline{\varphi}_j = \sum_j \tau_j\overline{\varphi}_j$, and in local holomorphic coordinates we have $T_{ij\bar{k}} dz^i\wedge dz^j\wedge d\bar{z}^k$.  Direct calculation in local coordinates also shows that
\begin{equation}
\partial \omega =\sqrt{-1}\,^t\tau \wedge \overline{\varphi} = \sqrt{-1}T'.
\end{equation}
By \cite[Theorem 3]{YZ}, we know that when $M$ is compact the metric will be balanced (namely, $\eta =0$). It can also be seen as follows. By $\nabla T=0$ we deduce $\bar{\partial}\eta=0$, hence $\eta=0$ by integrating (\ref{eq:ba2}) on $M$. In summary, we have
 \begin{lemma}\label{lemma:1}
 Suppose that $(M^n,g)$ is a Hermitian manifold with a CAS structure. Then it is Chern K\"ahler-like, and under any unitary frame $e$, it holds
  \begin{eqnarray}
  && \sum_{r=1}^n \big( T^r_{ij}T^{\ell}_{kr} + T^r_{jk}T^{\ell}_{ir} +  T^r_{ki}T^{\ell}_{jr} \big) \ = \ 0, \label{eq:TT} \\
 && \sum_{r=1}^n T^{r}_{im} R_{r\overline{j}k\overline{\ell}} \ = \ 0, \label{eq:TR}
\end{eqnarray}
 for any $1\leq i,j,k,\ell , m\leq n$. Moreover, $dT=0$ as a $(2,0)$-form valued in $T^{1, 0}M$,  $\|T\|^2$ is a constant,  and $(M, g)$ is balanced if $M$ is compact. In particular, for compact $M$, $\bar{\partial}^* \omega =0$.
 \end{lemma}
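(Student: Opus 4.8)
The plan is to read off every assertion from the Bianchi identities already recorded, once $\nabla T=0$ and $\nabla R=0$ are imposed. First I would substitute $\nabla T=0$ into the conjugate first Bianchi identity (\ref{eq:B1bar}): its right-hand side $T^{\ell}_{ik,\overline{j}}$ is a component of $\nabla T$, so it vanishes and $R_{k\overline{j}i\overline{\ell}}=R_{i\overline{j}k\overline{\ell}}$ for all indices. Combined with the Hermitian symmetry $R_{i\overline{j}k\overline{\ell}}=\overline{R_{j\overline{i}\ell\overline{k}}}$ that every Chern curvature tensor carries, this produces all the symmetries of a K\"ahler curvature tensor, so $(M,g)$ is Chern K\"ahler-like. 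Next, feeding $\nabla T=0$ into (\ref{eq:B1}) annihilates its left-hand side and leaves exactly (\ref{eq:TT}); feeding $\nabla R=0$ into the second Bianchi identity (\ref{eq:B2}) annihilates its left-hand side and leaves exactly (\ref{eq:TR}).

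For $dT=0$ I would treat the $(0,1)$- and $(1,0)$-components separately. Since the $(0,1)$-part of the Chern connection is the Dolbeault operator $\overline{\partial}$ (it vanishes in any local holomorphic frame), the equation $\nabla_{\overline{Y}}T=0$ says precisely that $T$, as a $T^{1,0}M$-valued $(2,0)$-form, is $\overline{\partial}$-closed, i.e. holomorphic. For the $(1,0)$-part, expanding $\partial T$ in a local unitary frame produces the cyclic antisymmetrization of the covariant derivatives $T^{\ell}_{ij,k}$ together with connection terms which, using the definition of the Chern connection form and (\ref{eq:TT}), reorganize into the two sides of (\ref{eq:B1}); both vanish under $\nabla T=0$, so $\partial T=0$ as well. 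That $\|T\|^2$ is constant then follows because $\nabla$ is metric and real, so $\nabla\overline{T}=\overline{\nabla T}=0$; hence the function $\|T\|^2$, being a metric contraction of $T\otimes\overline{T}$, is $\nabla$-parallel, and since $\nabla f=df$ on functions we get $d\|T\|^2=0$, so $\|T\|^2$ is constant on the connected manifold $M$.

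It remains to handle the compact case. From $\nabla T=0$ I would first note that Gauduchon's torsion $1$-form $\eta$, being a trace of $T$, is $\nabla$-parallel, and since the $(0,1)$-part of $\nabla$ acts as $\overline{\partial}$ on $(1,0)$-forms, $\overline{\partial}\eta=0$. Integrating (\ref{eq:ba2}) over $M$ and using Stokes' theorem kills $\int_M\partial\overline{\partial}\,\omega^{n-1}$ and the $\overline{\partial}\eta\wedge\omega^{n-1}$ term, leaving $\int_M \eta\wedge\overline{\eta}\wedge\omega^{n-1}=0$; since $\sqrt{-1}\,\eta\wedge\overline{\eta}\wedge\omega^{n-1}$ is a nonnegative multiple of the volume form, this forces $\eta\equiv 0$, i.e. $(M,g)$ is balanced (alternatively one may simply invoke \cite[Theorem 3]{YZ} now that Chern K\"ahler-likeness is known). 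Finally $d(\omega^{n-1})=0$ is equivalent, via the Hodge star, to $\overline{\partial}^*\omega=0$, which gives the last assertion. The only step needing genuine care is $dT=0$: making precise what ``$d$'' means on a $T^{1,0}M$-valued $(2,0)$-form and verifying that the connection and torsion corrections in $\partial T$ collapse exactly onto the Bianchi combination (\ref{eq:B1}) with no leftover curvature term; everything else is a one-line substitution into an identity established above.
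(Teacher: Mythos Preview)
Your proposal is correct and follows essentially the same route as the paper: the K\"ahler-like symmetry, (\ref{eq:TT}), (\ref{eq:TR}), constancy of $\|T\|^2$, and the balanced condition are all obtained exactly as you describe, by substituting $\nabla T=0$ and $\nabla R=0$ into (\ref{eq:B1bar}), (\ref{eq:B1}), (\ref{eq:B2}), and then integrating (\ref{eq:ba2}).

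The one place where the paper proceeds slightly differently is $dT=0$. Rather than splitting into $\overline{\partial}T$ and $\partial T$ and working in a frame, the paper invokes the general identity (for a bundle-valued $p$-form $\alpha$ and a metric connection $\nabla$ with torsion $T$)
\[
d\alpha(X_0,\ldots,X_p)=\sum_i(-1)^i(\nabla_{X_i}\alpha)(\ldots,\hat{X}_i,\ldots)-\sum_{i<j}(-1)^{i+j}\alpha(T(X_i,X_j),\ldots),
\]
applied to $\alpha=T$. With $\nabla T=0$ the first sum drops out; on three $(1,0)$-vectors the second sum is exactly the cyclic expression in (\ref{eq:TT}), and on two $(1,0)$-vectors and one $(0,1)$-vector every term vanishes because the Chern torsion has no mixed components. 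This handles both type components at once and makes precise the point you flagged at the end (what ``$d$'' means on a $T^{1,0}M$-valued form and why no curvature term appears). Your frame computation would reproduce the same cancellation, but the general formula is cleaner and avoids having to reorganize connection terms by hand.
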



\begin{proof}
For the differential $p$-form $\alpha$ valued in a vector bundle equipped with a metric and a metric compatible connection, we have the formula
\begin{eqnarray}
d\alpha (X_0, \cdots, X_p)&=& \sum_{i=0}^p (-1)^i (\nabla _{X_i} \alpha)(X_0, \cdots, \hat{X}_i, \cdots, X_p)\nonumber\\&\, &-\sum_{0\le i<j\le p} (-1)^{i+j} \alpha (T(X_i, X_j), X_0,\cdots, \hat{X}_i, \cdots, \hat{X}_j, \cdots, X_p). \label{eq:general1}
\end{eqnarray}
Now apply to $T$ we have that for $U, V, Z$ of $(1,0)$-type vectors
$$d\,T(U, V, Z)=T(T(U, V), Z)-T(T(U, Z), V)+T(T(V, Z), U)=0$$
by the fact that $\nabla T=0$ and the Jacobi identity satisfied by $T$. One also have $d\,T(U, V, \overline{Z})=0$ by that $T(U, \overline{Z})=0$.

The last statement follows from the fact that $T$ is parallel.
\end{proof}

 Fix any point $p\in M^n$. Denote the holomorphic tangent space $T'_pM=T^{1,0}_pM\cong {\mathbb C}^n$ at $p$ by $\mathcal{V}_p=\mathcal{V}$. The Chern torsion $T$ is a skew-symmetric bilinear map from $\mathcal{V}\times \mathcal{V}$ to $\mathcal{V}$. Let us denote by $\mathcal{W}$ the subspace of $\mathcal{V}$ spanned by the image set of $T$:
 $$ \{ T(X,Y) \mid X, Y \in \mathcal{V}\}.$$
 Denote by
 $$ \mathcal{N}=\{ Z \in \mathcal{V} \mid \langle T(X,Y), \overline{Z} \rangle =0 \ \ \forall \ X, Y \in \mathcal{V}\}$$
 the linear subspace of $\mathcal{V}$, which is the set of vectors in $\mathcal{V}$ that is perpendicular to $\mathcal{W}$.  We  call $\mathcal{N}$ the {\em perpendicular space} of $T$. The  tangent bundle $ T^{1,0}M $ splits orthogonally as $\mathcal{W} \oplus \mathcal{N}  $. We will call $\mathcal{W}$ the {\em image distribution} of $T$, since it is easy to see that $\dim(\mathcal{W})$ is independent of the choice of point $p$. Moreover we have the following

\begin{lemma}\label{lemma:32-1}
 Let $(M^n,g)$ be a Hermitian manifold with a CAS structure. Then the image distribution $\mathcal{W}$ of the  torsion $T$ is invariant under the parallel transport.  In particular, $\nabla \mathcal{ W} \subseteq \mathcal{W}$.
 \end{lemma}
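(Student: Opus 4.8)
The plan is to deduce the statement from the single hypothesis $\nabla T=0$, together with the fact that the Chern connection satisfies $\nabla J=0$ and hence preserves the type decomposition of the complexified tangent bundle (so parallel transport maps $T^{1,0}_pM$ isomorphically onto $T^{1,0}_qM$). Thus I never have to leave $\mathcal{V}=T^{1,0}M$ during the argument.

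First I would record the infinitesimal computation. Fix a point, take a local unitary frame $e=\{e_1,\ldots,e_n\}$ of $T^{1,0}M$, and recall that $\mathcal{W}$ is spanned pointwise by the vectors $T(e_i,e_k)$. For any tangent vector $x$, the Leibniz rule gives
\[
\nabla_x\big(T(e_i,e_k)\big)=(\nabla_x T)(e_i,e_k)+T(\nabla_x e_i,e_k)+T(e_i,\nabla_x e_k).
\]
The first term vanishes since $T$ is parallel, and since $\nabla_x e_i,\nabla_x e_k$ are again of type $(1,0)$ the remaining two terms lie in $\mathcal{W}$. An arbitrary local section of $\mathcal{W}$ has the form $Z=\sum_{i,k} g^{ik}T(e_i,e_k)$ for smooth functions $g^{ik}$, so $\nabla_x Z=\sum (xg^{ik})\,T(e_i,e_k)+\sum g^{ik}\nabla_x\big(T(e_i,e_k)\big)$ again lies in $\mathcal{W}$. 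Hence $\nabla\mathcal{W}\subseteq\mathcal{W}$.

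Next I would upgrade this to parallel transport. Let $\gamma$ be a piecewise smooth path from $p$ to $q$, and let $X(t),Y(t)$ be parallel $(1,0)$-sections along $\gamma$. Then $t\mapsto T(X(t),Y(t))$ is parallel, because $\tfrac{D}{dt}T(X,Y)=(\nabla_{\dot\gamma}T)(X,Y)+T(\tfrac{D}{dt}X,Y)+T(X,\tfrac{D}{dt}Y)=0$. Consequently, writing $Z_p=\sum_a T(X_a,Y_a)\in\mathcal{W}_p$, its parallel transport along $\gamma$ equals $\sum_a T(X_a(t),Y_a(t))\in\mathcal{W}_{\gamma(t)}$, so the parallel transport operator satisfies $P_\gamma(\mathcal{W}_p)\subseteq\mathcal{W}_q$. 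Applying the same to the reversed path and using that $P_\gamma$ is a linear isomorphism yields $P_\gamma(\mathcal{W}_p)=\mathcal{W}_q$; in particular $\dim\mathcal{W}_p$ is independent of $p$ (which also re-proves the remark preceding the lemma and legitimizes treating $\mathcal{W}$ as a smooth subbundle).

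There is no genuine obstacle here; the only points needing care are (i) keeping all relevant vectors of type $(1,0)$, which $\nabla J=0$ guarantees, and (ii) the constancy of $\operatorname{rank}\mathcal{W}$, which as noted is a byproduct of the parallel-transport invariance itself. Alternatively, one could phrase the whole proof in holonomy language: $\nabla T=0$ makes $T_p$ invariant under the restricted Chern holonomy group acting diagonally on $\mathcal{V}\times\mathcal{V}\to\mathcal{V}$, so $\mathcal{W}_p=\operatorname{span}\{T_p(X,Y)\}$ is a holonomy-invariant subspace, which is exactly the assertion that $\mathcal{W}$ is a parallel subbundle.
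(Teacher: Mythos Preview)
Your proof is correct and follows essentially the same approach as the paper: both arguments rest on the single observation that $\nabla T=0$ forces $T$ to commute with parallel transport, so that $T(X(t),Y(t))$ is parallel whenever $X(t),Y(t)$ are. The only cosmetic difference is the order and the tool used for the infinitesimal statement: you derive $\nabla\mathcal{W}\subseteq\mathcal{W}$ first via the Leibniz rule and then upgrade to parallel transport, whereas the paper establishes parallel-transport invariance in one line and then recovers the infinitesimal inclusion from the limit formula $\nabla_Z W(0)=\lim_{t\to0}\frac{1}{t}\big(P^{t,0}W(t)-W(0)\big)$.
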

 \begin{proof} If $X(t), Y(t)$ are parallel along a curve $\gamma(t)$, $T(X(t), Y(t))$ is parallel. This proves the first statement. Let $Z$ be a real vector at $p$ with $\gamma(t)$ be a curve satisfying $\gamma(0)=p$, $\gamma'(0)=Z$. Let $X(t), Y(t)$ be two $(1, 0)$-type vectors along $\gamma(t)$. Let $W(t)=T(X(t), Y(t))$. Let $P^{t, 0}$ be the parallel transport from $\gamma(t)$ to $\gamma(0)$ along $\gamma$. By Lemma 2.1 of \cite{Ni-21} $\nabla_Z W(0)=\lim_{t\to 0}\frac{1}{t}\left(P^{t, 0}(T(X(t), Y(t)))-T(X(0), Y(0))\right)$. Since $T$ is parallel we have that
 $$
 \nabla_Z W(0)=\lim_{t\to 0}\frac{1}{t}\left(T(P^{t, 0}(X(t)), P^{t, 0}(Y(t)))-T(X(0), Y(0))\right)
 $$
 which clearly belongs to $\mathcal{W}$.
 \end{proof}

As an immediate corollary, we know that $\mathcal{W}$ is a holomorphic foliation in $M$:

\begin{lemma}\label{lemma:3}
 Suppose that $(M^n\!,g)$ is a Hermitian manifold with a CAS structure. Then the image distribution $\mathcal{W}\,$ of $T$ is a flat holomorphic foliation in $M$.
 \end{lemma}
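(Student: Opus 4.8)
The plan is to verify three things in turn: that $\mathcal{W}$ is a holomorphic subbundle of $T^{1,0}M$, that it is involutive, and that the induced Chern connection on each leaf is flat. The first two together say that $\mathcal{W}$ integrates to a holomorphic foliation, and the third is the meaning of ``flat'' here. For the holomorphicity, I would start from Lemma~\ref{lemma:32-1}, which gives that $\mathcal{W}$ is $\nabla$-parallel; in particular $\nabla_{\overline{Z}}\mathcal{W}\subseteq\mathcal{W}$ for every $(0,1)$ vector $\overline{Z}$. Since the $(0,1)$-part of the Chern connection on $T^{1,0}M$ is precisely the Dolbeault operator $\overline{\partial}$ of the holomorphic bundle $T^{1,0}M$, this says $\mathcal{W}$ is $\overline{\partial}$-invariant, hence a holomorphic subbundle of $T^{1,0}M$, so it is locally spanned by holomorphic $(1,0)$ vector fields.

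For involutivity, take $(1,0)$ sections $X,Y$ of $\mathcal{W}$ and write $[X,Y]=\nabla_XY-\nabla_YX-T(X,Y)$. The first two terms stay in $\mathcal{W}$ because $\mathcal{W}$ is $\nabla$-parallel, and $T(X,Y)\in\mathcal{W}$ by the very definition of $\mathcal{W}$, so $[X,Y]$ is again a section of $\mathcal{W}$; for a mixed bracket, $T(X,\overline{Y})=0$ together with parallelism gives $[X,\overline{Y}]\in\mathcal{W}\oplus\overline{\mathcal{W}}$. Hence the real distribution underlying $\mathcal{W}\oplus\overline{\mathcal{W}}$ is involutive, and combined with the first step $\mathcal{W}$ defines a holomorphic foliation of $M$ whose leaves are complex submanifolds.

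For the flatness of a leaf $L$: since $\mathcal{W}=T^{1,0}L$ is $\nabla$-parallel, $\nabla$ restricts to a connection $\nabla^L$ on $TL$ that is metric for $g|_L$ and compatible with the complex structure of $L$; because $L\hookrightarrow M$ is holomorphic and $\mathcal{W}$ is a holomorphic subbundle, the $(0,1)$-part of $\nabla^L$ is the Dolbeault operator of $T^{1,0}L$, so by uniqueness of the Chern connection $\nabla^L$ is the Chern connection of $(L,g|_L)$ and its curvature is the restriction of $R$ to arguments tangent to $L$. Now identity~(\ref{eq:TR}) of Lemma~\ref{lemma:1} states $R(T(e_i,e_m),\overline{e}_j,e_k,\overline{e}_\ell)=0$, and since the vectors $T(e_i,e_m)$ span $\mathcal{W}$ this yields $R(W_1,\overline{W}_2,W_3,\overline{W}_4)=0$ for all $W_i\in\mathcal{W}$; in particular $\nabla^L$ is flat, i.e.\ $L$ is Chern flat.

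The two genuinely nontrivial inputs, Lemma~\ref{lemma:32-1} (parallelism of $\mathcal{W}$) and identity~(\ref{eq:TR}), are already established, so nothing here should present a real difficulty; the only step requiring a bit of care is the identification of $\nabla|_L$ with the Chern connection of the leaf, and this rests squarely on the holomorphicity of $\mathcal{W}$ proved in the first step. Everything else is routine bookkeeping.
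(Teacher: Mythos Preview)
Your argument is correct and follows the same route as the paper: parallelism of $\mathcal{W}$ gives holomorphicity, the bracket formula $[X,Y]=\nabla_XY-\nabla_YX-T(X,Y)$ gives involutivity, and identity~(\ref{eq:TR}) gives flatness. One small remark: the paper reads ``flat'' in the stronger sense that $\mathcal{W}$ lies in the kernel of $R$, i.e.\ $R(W,\overline{Y},Z,\overline{V})=0$ for $W\in\mathcal{W}$ and \emph{arbitrary} $Y,Z,V$ --- this is exactly what (\ref{eq:TR}) delivers and is what is used later to produce local $\nabla$-parallel frames of $\mathcal{W}$ --- so your restriction of all four slots to $\mathcal{W}$, while sufficient for the leafwise Chern flatness you establish, discards information you will want downstream.
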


 \begin{proof}
 $\mathcal{W}$ being parallel under $\nabla$ implies that it is holomorphic, so it suffices to prove that $\mathcal{W}$ is integrable. For any type $(1,0)$ vector fields $X$, $Y$ in $\mathcal{W}$, since $T(X,Y)\in \mathcal{W}$ by the construction of $\mathcal{W}$, we have
 $$ [X,Y] = \nabla_XY - \nabla_YX - T(X,Y) \in \mathcal{W}.$$
  So $\mathcal{W}$ is integrable. By (\ref{eq:TR}) and the definition of $\mathcal{W}$, we know that $\mathcal{W}$ is contained in the kernel of the curvature tensor $R$, hence flat.
 \end{proof}

 Since $\mathcal{W}$ is parallel, its orthogonal complement $\mathcal{N}=\mathcal{W}^{\perp}$ is also parallel. However, $\mathcal{N}$ may not be integrable in general. This is because for any type $(1,0)$ vector fields $X$, $Y$ in $\mathcal{N}$, we always have $[X,\overline{Y}]\in \mathcal{N}\oplus \overline{\mathcal{N}}$ as before (since $T(X, \overline{Y})=0$), but $[X,Y]$ may not be in $\mathcal{N}$ since $T(X,Y)$ may be non-zero (hence lives in $\mathcal{W}$). If $\mathcal{N}$ is (complex) $1$-dimensional, then this will not be an issue, so in this case $\mathcal{N}$ is also integrable, hence $M$ is locally a (possibly a warped) product as a Hermitian manifold. We have the following

 \begin{theorem}\label{thm-imagecodim1}
 Suppose that $(M^n,g)$ is a Hermitian manifold with a CAS structure such that the image distribution $\mathcal{W}\,$ of $T$ has codimension one. If $g$ is not Chern flat, then the universal cover of $M$ is holomorphically isometric to a product $G\times C$ where $G$ is a connected, simply-connected complex Lie group equipped with a left invariant Hermitian metric, and $C$ is $\mathbb{P}^1$ or the unit disc $\mathbb{D}=\{z\, |\, |z|<1\}$ equipped with (a scaling of) the standard metric.
 \end{theorem}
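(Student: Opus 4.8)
The plan is to promote the orthogonal splitting $T^{1,0}M=\mathcal{W}\oplus\mathcal{N}$ — in which $\mathcal{N}$ is now a complex \emph{line} bundle, since $\mathcal{W}$ has codimension one — to a decomposition of $(M,g)$ into a local Hermitian product, and then to identify the two factors using Boothby's theorem and the uniformization of Riemann surfaces. As a first step I would record the shape of the curvature: by Lemma~\ref{lemma:3} one has $\mathcal{W}\subseteq\ker R$ (this is (\ref{eq:TR})), and combining this with the K\"ahler-like symmetries of Lemma~\ref{lemma:1} shows that $R_{i\bar jk\bar\ell}$ vanishes as soon as one of its arguments, or a conjugate of one, lies in $\mathcal{W}$. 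Hence $\nabla$ restricts to a \emph{flat} connection on $\mathcal{W}$, while on the line bundle $\mathcal{N}$ the Chern connection has curvature form $\Theta$ whose only possibly nonzero component is $\Theta(e_n,\bar e_n)=R_{n\bar nn\bar n}$, where $e_n$ is a local unit section of $\mathcal{N}$. Since $\nabla R=0$, the number $R_{n\bar nn\bar n}$ is a global constant; the hypothesis that $g$ is not Chern flat says exactly that this constant is nonzero, so $\Theta$ is nowhere vanishing.

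The heart of the argument — which I expect to be the main obstacle — is to prove that $T(\mathcal{N},\mathcal{W})=0$. Because the image of $T$ always lies in $\mathcal{W}$, restricting $T$ to inputs from $\mathcal{N}\otimes\mathcal{W}$ produces a section $\mathcal{T}$ of $\mathcal{N}^{\ast}\otimes\mathrm{End}(\mathcal{W})$, and $\mathcal{T}$ is $\nabla$-parallel because $T$, $\mathcal{N}$ and $\mathcal{W}$ all are. Now $\mathrm{End}(\mathcal{W})$ carries the flat connection induced from $\mathcal{W}$, whereas $\mathcal{N}^{\ast}$ has curvature $-\Theta$; consequently the curvature operator of $\mathcal{N}^{\ast}\otimes\mathrm{End}(\mathcal{W})$ is nothing but multiplication by $-\Theta$. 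A parallel section is annihilated by the curvature, so $\Theta\cdot\mathcal{T}\equiv0$, and since $\Theta$ is nowhere zero this forces $\mathcal{T}\equiv0$. It is precisely here that the hypothesis ``not Chern flat'' is indispensable: if it is dropped, a solvable complex Lie group whose derived subalgebra has codimension one is a Chern-flat CAS manifold with $T(\mathcal{N},\mathcal{W})\neq0$ that is not a product.

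Granting this, the torsion is block diagonal with respect to the real splitting $TM=D_1\oplus D_2$, where $D_1=(\mathcal{W}\oplus\overline{\mathcal{W}})\cap TM$ and $D_2=(\mathcal{N}\oplus\overline{\mathcal{N}})\cap TM$: indeed $T(\mathcal{N},\mathcal{N})=0$ for dimension reasons, $T(\mathcal{N},\mathcal{W})=0$ by the previous step, $T(\mathcal{W},\mathcal{W})\subseteq\mathcal{W}$ by construction, and all mixed-type values vanish. Substituting this into the identity (\ref{eq:ts}) for $S=\nabla-\nabla^{LC}$, a short type bookkeeping gives $\langle S_XY,Z\rangle=0$ whenever $X,Y$ lie in one of $D_1,D_2$ and $Z$ lies in the other; since $\nabla$ already preserves $D_1$ and $D_2$ (so $\nabla^{LC}_XY-\nabla_XY=-S_XY$ carries the whole obstruction), this is exactly the statement that the leaves of the $\mathcal{W}$- and of the $\mathcal{N}$-foliation are totally geodesic for the Levi-Civita connection, hence that $D_1$ and $D_2$ are $\nabla^{LC}$-parallel.

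Finally I would pass to the universal cover $\widetilde{M}$ (complete whenever $M$ is, in particular when $M$ is compact) and apply the de Rham decomposition theorem; since $J$ is parallel and preserves $D_1$ and $D_2$, this yields a Hermitian product $\widetilde{M}=\widetilde{M}_1\times\widetilde{M}_2$ with $T\widetilde{M}_i=D_i$, whose Chern connection is the product of the Chern connections of the factors (by the uniqueness of the Chern connection). Then $\widetilde{M}_1$ is Chern flat, so by Boothby's theorem \cite{Boothby} it is a connected, simply-connected complex Lie group $G$ with a left-invariant Hermitian metric; and $\widetilde{M}_2$ is a simply-connected Riemann surface (automatically K\"ahler) whose Gaussian, i.e. Chern, curvature is the nonzero constant $R_{n\bar nn\bar n}$, hence $\mathbb{P}^1$ or $\mathbb{D}$ carrying a scaling of the standard metric. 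This is the asserted structure.
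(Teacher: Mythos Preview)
Your proof is correct, and its overall architecture matches the paper's: establish $T(\mathcal{N},\mathcal{W})=0$, deduce that the real splitting $D_1\oplus D_2$ is $\nabla^{LC}$-parallel via (\ref{eq:ts}), then invoke de Rham, Boothby, and uniformization. The one point of genuine difference is how the key vanishing $T(\mathcal{N},\mathcal{W})=0$ is obtained. The paper fixes a parallel frame $\{e_2,\dots,e_n\}$ of $\mathcal{W}$ and, for each pair $\alpha,\beta$, considers the parallel $1$-form $\psi(\cdot)=\langle T(\cdot,e_\beta),\bar e_\alpha\rangle$; if its $\mathcal{N}$-component were nonzero, its dual would give a nonvanishing parallel section of the line bundle $\mathcal{N}$, hence a global parallel frame and Chern flatness. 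You instead bundle all of these into the single parallel section $\mathcal{T}\in\Gamma(\mathcal{N}^\ast\otimes\mathrm{End}(\mathcal{W}))$ and kill it with the Ricci identity, using that the curvature of this bundle is multiplication by $-\Theta\neq0$. Both arguments are the same principle in different clothing (a line bundle with nonzero curvature admits no nonzero parallel section, nor parallel sections after tensoring with something flat), and yours is arguably cleaner in codimension one. What the paper's formulation buys is portability: the technique of producing parallel $1$-forms $\psi_{\alpha\beta}$ and extracting parallel \emph{directions} in $\mathcal{N}$ is exactly what drives the higher-codimension arguments in Theorems~\ref{thm-dim4b}, \ref{thm-dim5}, and \ref{thm-imagecodim}, where $\mathcal{N}$ is no longer a line and one cannot expect a single curvature coefficient to annihilate $\mathcal{T}$ outright.
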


\begin{proof}
In this case, the holomorphic tangent bundle $T^{1,0}M$ splits orthogonally as the direct sum of two integrable holomorphic foliation $\mathcal{W}\oplus \mathcal{N}$. Since $\mathcal{W}$ is contained in the kernel of the Chern curvature tensor $R$, around any given point $p\in M$ we have local unitary frame $e$ so that $\{ e_2, \ldots , e_n\}$ spans $\mathcal{W}$ and $\nabla e_2=\cdots =\nabla e_n=0$.

We claim that $T^{\ast}_{1\ast}=0$. Assume the contrary, then there would be some $2\leq \alpha, \beta \leq n$ such that $T^{\alpha}_{1\beta}\neq 0$. Consider the local $(1,0)$-form $\psi$ defined by $\psi (\cdot ) = \langle T(\cdot , e_{\beta} ) , \overline{e}_{\alpha} \rangle$. Let $X$ be the vector field dual to $\psi$, namely, $\psi (Y)= \langle Y, \overline{X}\rangle$ for any $Y$. Then $X=\sum_{i=1}^n \overline{T^{\alpha}_{i\beta}}\, e_i = X_1 + X_2$ where $X_1= \overline{T^{\alpha}_{1\beta}}\, e_1$ is the component of $X$ in $\mathcal{N}$. Clearly, $\nabla \psi=0$, hence $\nabla X=0$. So its component in the parallel distribution $\mathcal{N}$ is also parallel, namely, $\nabla X_1=0$. Replacing $e_1$ by $\frac{1}{|X_1|}X_1$, we get $\nabla e_1=0$ thus $e$ is a parallel frame. This means that $M$ is Chern flat, a contradiction. This completes the proof of the claim.

Denote by $\nabla^{LC}$ the Levi-Civita connection of $g$. By \cite[Lemma 2]{YZ}, note that our notation $T^j_{ik}$ here is twice of the torsion defined there, we have
$$ \nabla^{LC}_{e_i}e_1 = \nabla_{e_i}e_1 + \frac{1}{2} \sum_{j=1}^n  T^j_{1i}e_j, \ \ \ \  \nabla^{LC}_{\overline{e}_i}e_1 = \nabla_{\overline{e}_i}e_1  +  \frac{1}{2} \sum_{j=1}^n \{  T^i_{1j}\overline{e}_j  - \overline{ T^1_{ji} } e_j\} .$$
So by the above claim,  we know that $\mathcal{N}\oplus \overline{\mathcal{N}}$ is parallel under the Levi-Civita connection $\nabla^{LC}$, hence splits off a one-dimensional K\"ahler de Rham factor $C$. This completes the proof Theorem \ref{thm-imagecodim1}.
\end{proof}

Following a similar argument,  we can split off some other K\"ahler de Rham factors of a given CAS manifold. Let
$$\mathcal{N}_0\doteqdot \{X \in \mathcal{N}\ |\   T(X, Y)=0, \ \ \ \forall \ Y\in \mathcal{V} \}.$$
We will call it the {\em kernel space} of $T$.  It is a linear subspace of $\mathcal{V}$, and we claim that it is invariant under the parallel transport:

 \begin{lemma}\label{lemma:4}
 Let $(M^n,g)$ be a Hermitian manifold with a CAS structure. Then the kernel distribution $\mathcal{N}_0$ of $T$ is parallel under $\nabla$.
 \end{lemma}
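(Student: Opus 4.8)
The plan is to mimic the proof of Lemma \ref{lemma:32-1}, using the two facts that the torsion $T$ is $\nabla$-parallel and that the perpendicular distribution $\mathcal{N}$ is $\nabla$-parallel. First I would record these inputs. Since $(M^n,g)$ has a CAS structure, $\nabla T=0$ by Lemma \ref{lemma:1}; equivalently, if $P^{t,0}$ denotes parallel transport along a curve $\gamma$ from $\gamma(t)$ to $\gamma(0)$, then $P^{t,0}\big(T(a,b)\big)=T\big(P^{t,0}a,\,P^{t,0}b\big)$ for all $(1,0)$ vectors $a,b$ at $\gamma(t)$ (apply $\nabla T=0$ to parallel fields). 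Next, $\mathcal{N}=\mathcal{W}^{\perp}$ is $\nabla$-parallel: by Lemma \ref{lemma:32-1} the image distribution $\mathcal{W}$ is invariant under parallel transport, and since the Chern connection is Hermitian its parallel transport is a complex-linear isometry of $T^{1,0}M$, hence carries $\mathcal{W}^{\perp}$ to $\mathcal{W}^{\perp}$.

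With these in hand I would argue as follows. Fix $p\in M$ and $X_0\in(\mathcal{N}_0)_p$, let $\gamma$ be any curve with $\gamma(0)=p$, and let $X(t)$ be the parallel transport of $X_0$ along $\gamma$, so $P^{t,0}X(t)=X_0$. Because $X_0\in\mathcal{N}_p$ and $\mathcal{N}$ is parallel, $X(t)\in\mathcal{N}_{\gamma(t)}$ for all $t$. Moreover, for any $(1,0)$ vector $v\in\mathcal{V}_{\gamma(t)}$, setting $v_0=P^{t,0}v\in\mathcal{V}_p$ we get
\[
T\big(X(t),v\big)=(P^{t,0})^{-1}\,T\big(P^{t,0}X(t),\,P^{t,0}v\big)=(P^{t,0})^{-1}\,T\big(X_0,v_0\big)=(P^{t,0})^{-1}(0)=0,
\]
using $\nabla T=0$ and $X_0\in\mathcal{N}_0$. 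Since $v$ was arbitrary, $X(t)\in(\mathcal{N}_0)_{\gamma(t)}$. Thus parallel transport preserves $\mathcal{N}_0$, and exactly as in the proof of Lemma \ref{lemma:32-1} (differentiating at $t=0$ via Lemma 2.1 of \cite{Ni-21}) this gives $\nabla_Z X\in\mathcal{N}_0$ for every local section $X$ of $\mathcal{N}_0$ and every tangent vector $Z$, i.e. $\mathcal{N}_0$ is parallel under $\nabla$.

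I do not expect a genuine obstacle: everything reduces to $\nabla T=0$ together with the parallelism of $\mathcal{N}$ already established. The only point worth a remark is that $\mathcal{N}_0$ is honestly a smooth, constant-rank distribution and not merely a pointwise-defined family of subspaces; but this is automatic once invariance under parallel transport is known, since parallel transport is an isomorphism and $M$ is connected, so $\dim(\mathcal{N}_0)_p$ is independent of $p$. (One should take care that the inclusion $\mathcal{N}_0\subseteq\mathcal{N}$ in the definition is genuinely used above — it is not a priori automatic from $T(X,\cdot)=0$ — which is why the parallelism of $\mathcal{N}$, and not just that of $T$, enters the argument.)
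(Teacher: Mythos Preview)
Your argument is correct. It is essentially the parallel-transport proof that the paper itself acknowledges in its first sentence (``The argument in the proof Lemma~\ref{lemma:3} also implies the result''), and your care in verifying \emph{both} defining conditions of $\mathcal{N}_0$---membership in $\mathcal{N}$ and annihilation by $T(\cdot,\,\ast)$---is exactly right; neither condition implies the other in general.

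The paper, however, deliberately writes out a different proof ``for the sake of later proofs'': it fixes a unitary frame adapted to the splitting $\mathcal{W}\oplus\mathcal{N}_1\oplus\mathcal{N}_0$, expands the equation $\nabla T=0$ in components as in \eqref{eq:nablaT}, and then shows the relevant connection coefficients $\theta_{\alpha i}$ vanish by interpreting $\mathcal{N}_0$ as the common zero locus of the linear functionals $\ell_{X\overline{Y}}(Z)=\langle T(Z,X),\overline{Y}\rangle$ on $\mathcal{N}$. Your approach is cleaner and more conceptual; the paper's moving-frame computation is longer but sets up the index conventions and the identity \eqref{eq:nablaT} that are reused repeatedly in Sections~4--6 (e.g.\ in the proofs of Theorems~\ref{thm-imagecodim1}, \ref{thm-dim3}, \ref{thm-dim4}, and \ref{thm-imagecodim}). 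So the trade-off is elegance versus infrastructure for what follows.
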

\begin{proof} The argument in the proof Lemma \ref{lemma:3} also implies the result. We include an alternate argument using a moving frame for the sake of later proofs. Denote by $\mathcal{N}_1$ the orthogonal complement of $\mathcal{N}_0$ in $\mathcal{N}$. Let $e$ be a local unitary frame such that $\{ e_1, \ldots , e_q\}$ spans $\mathcal{W}$,  $\{ e_{q+1}, \ldots , e_{p}\}$ spans $\mathcal{N}_1$, and $\{ e_{p+1}, \ldots , e_{n}\}$ spans $\mathcal{N}_0$. Let $\theta$ be the connection matrix with respect to the frame $e$. Namely $(\theta_{ab})$  matrix (of 1-forms) is defined by $\nabla e_a =\theta_{ab} e_b$.
  For any $1\leq a,b,c\leq n$ and any $x\in \mathcal{V} \oplus \overline{\mathcal{V}}$, since $\nabla T=0$, we have $T^{c }_{ab,x} =0$, that is
 \begin{equation}
  x(T^{c}_{ab}) = \sum_{r=1}^n \{ T^c_{rb} \theta_{ar}(x) +  T^c_{ar} \theta_{br}(x) - T^r_{ab} \theta_{rc}(x) \} . \label{eq:nablaT}
  \end{equation}
  We want to show that $\nabla \mathcal{N}_0 \subseteq \mathcal{N}_0$, namely, $\theta_{i\alpha}=0$ for any $p<\alpha \leq n$ and any $1\leq i\leq p$. We already know this for any $1\leq i\leq q$, so we may assume that $q<i\leq p$.

 Let us denote by $\mathcal{N}^{\ast}$ the dual space of $\mathcal{N}$, with the induced inner product. For $X$, $Y\in \mathcal{V}$, let us denote by $\ell_{X\overline{Y}} \in \mathcal{N}^{\ast}$ the linear functional on $\mathcal{N}$ defined by:
 $$ \ell_{X\overline{Y}}(Z) = \langle T(Z,X), \overline{Y}\rangle .$$
 Let $\Delta \subseteq \mathcal{N}^{\ast}$ be subset consisting of $\ell_{X\overline{Y}}$ for all  $X,Y\in \mathcal{V}$, and let $U={\mathbb C}(\Delta)$ be its linear span in $\mathcal{N}^{\ast}$. By definition, $\mathcal{N}_0$ consists of elements $Z\in \mathcal{N}$ such that $\ell_{X\overline{Y}}(Z)=0$ for any $X,Y\in \mathcal{V}$, namely, $\mathcal{N}_0\subseteq \mathcal{N}$ is the subspace annihilated by $\Delta$ in $\mathcal{N}^{\ast}$. Therefore $\mathcal{N}_0=Ann(\Delta ) = Ann (U)$, and the decomposition $\mathcal{N}=\mathcal{N}_1 \oplus \mathcal{N}_0$ corresponds to $\mathcal{N}^{\ast} = U \oplus U^{\perp}$. In particular, there exists a basis $\{ \varphi_i, \varphi_{\alpha}\}$ of $\mathcal{N}^{\ast}$ which is dual to $\{ e_i, e_{\alpha}\}$ of $\mathcal{N}$. Here $q<i\leq p$ and $p<\alpha \leq n$.

 Now let us fix $\alpha $ with $p<\alpha \leq n$. Since $T^{\ast}_{\alpha \ast }=0$, by letting $a=\alpha$ in (\ref{eq:nablaT}), we get
 $$ \sum_{r=1}^n T^c_{r b} \theta_{\alpha r} =0 \ \ \ \ \ \forall \ 1\leq b, c\leq n. $$
 In particular, $\sum_r T^Y_{rX}\theta_{\alpha r}=0$ for any $X,Y\in \mathcal{V}$.

 For any fixed $i$ with $q<i\leq p$, since $\varphi_i \in U ={\mathbb C}(\Delta)$, there exist finitely many elements $X_1$, $Y_1, \ldots , X_k$, $Y_k$ in $\mathcal{V}$ such that
 $ \varphi_i = \ell_{X_1\overline{Y_1}} + \cdots + \ell_{X_k\overline{Y_k}}$. So we get
 $$ \theta_{\alpha i} = \sum_{r=q+1}^n \varphi_i(e_r) \theta_{\alpha r} = \sum_r \big( T^{Y_1}_{rX_1} + \cdots +  T^{Y_k}_{rX_k} \big) \theta_{\alpha r} = 0. $$
 This completes the proof of the lemma.
 \end{proof}

Let us denote by $\nabla^{LC}$ the Levi-Civita connection of $g$. Suppose $X$ is any type $(1,0)$ vector field belonging to $\mathcal{N}_0$. Then by definition, we have $T(X,Y)=0$ and $\langle T(Y,Z),\overline{X}\rangle =0$ for any type $(1,0)$ vectors $Y$ and $Z$. Since the difference between $\nabla^{LC}$ and the Chern connection $\nabla$ are given by the torsion components \cite[Lemma 2]{YZ} (see also (\ref{eq:ts})), so as in the proof of Theorem \ref{thm-imagecodim1} we have $\nabla^{LC} X = \nabla X$, therefore $\mathcal{N}_0\oplus \overline{\mathcal{N}_0}$ is parallel under $\nabla^{LC}$, hence giving a de Rham factor which is K\"ahler since its Chern torsion vanishes. In summary we have

\begin{theorem}\label{thm-N0}
 Let $(M^n,g)$ be a Hermitian manifold with a CAS structure. Suppose that the kernel foliation $\mathcal{N}_0$ of $T$ is of dimension $s$,  then the universal cover of $M$ is holomorphically isometric to a product $M_1\times M_2$, where $M_2$ is a Hermitian symmetric space of dimension $s$, while the Chern connection of $M_1$ has parallel torsion and curvature, and its torsion has trivial kernel.
\end{theorem}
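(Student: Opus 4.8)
The plan is to realize the asserted splitting as the de Rham decomposition associated to the parallel distribution $\mathcal{N}_0$, and then to analyze the two factors separately. \emph{Step 1 (parallelism of the relevant distributions).} By Lemma~\ref{lemma:4} the subbundle $\mathcal{N}_0\subseteq T^{1,0}M$ is parallel under the Chern connection $\nabla$; since $\nabla$ preserves $J$, so is its orthogonal complement $\mathcal{N}_0^{\perp}=\mathcal{W}\oplus\mathcal{N}_1$ inside $T^{1,0}M$, and hence the real distributions $\mathcal{D}_2=\mathcal{N}_0\oplus\overline{\mathcal{N}_0}$ and $\mathcal{D}_1=\mathcal{N}_0^{\perp}\oplus\overline{\mathcal{N}_0^{\perp}}$ are $J$-invariant. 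Now, exactly as in the paragraph preceding the statement, one uses the formula of \cite[Lemma~2]{YZ} expressing $\nabla^{LC}-\nabla$ purely in terms of torsion components: for a $(1,0)$ field $X\in\mathcal{N}_0$ every relevant component vanishes because $T(X,\cdot)=0$ and $\langle T(\cdot,\cdot),\overline{X}\rangle=0$, so $\nabla^{LC}X=\nabla X\in\mathcal{N}_0$. Therefore $\mathcal{D}_2$ is parallel under the Levi-Civita connection $\nabla^{LC}$, and so is $\mathcal{D}_1=\mathcal{D}_2^{\perp}$; in particular both are integrable.

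\emph{Step 2 (the de Rham splitting is holomorphic).} Passing to the universal cover $\widetilde{M}$ (complete, e.g.\ when $M$ is compact) and applying the de Rham decomposition theorem to the complementary $\nabla^{LC}$-parallel distributions $\mathcal{D}_1,\mathcal{D}_2$, one obtains an isometric product $\widetilde{M}=M_1\times M_2$ with $TM_i=\mathcal{D}_i$. Since $\mathcal{D}_1$ and $\mathcal{D}_2$ are $J$-invariant, $J=J_1\oplus J_2$ respects the product; the leaves of $\mathcal{D}_2$ are complex submanifolds of the complex manifold $\widetilde{M}$ (being $J$-invariant and involutive), so $M_1$ and $M_2$ carry integrable complex structures and $\widetilde{M}=M_1\times M_2$ is a Hermitian, hence holomorphically isometric, product, with $\dim_{\mathbb{C}}M_2=\dim_{\mathbb{C}}\mathcal{N}_0=s$. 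Consequently the Chern connection of $\widetilde{M}$ is the product of the Chern connections of the factors, and $T$ (resp.\ $R$) decomposes with no mixed $M_1$–$M_2$ components.

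\emph{Step 3 (identifying the factors).} On $M_2$ the Chern torsion is the restriction $T|_{\mathcal{N}_0\times\mathcal{N}_0}$, which vanishes by the very definition of $\mathcal{N}_0$; hence $M_2$ is K\"ahler and its Chern connection equals $\nabla^{LC}|_{M_2}$. Its curvature is $R|_{\mathcal{N}_0}$, which, being the restriction of a $\nabla$-parallel tensor to a $\nabla$-parallel subbundle, is parallel. Thus $M_2$ is a complete, simply-connected K\"ahler manifold with $\nabla^{LC}R=0$, i.e.\ a Hermitian symmetric space, of complex dimension $s$. On $M_1$ the Chern torsion and curvature are the restrictions of the parallel tensors $T,R$, so they remain parallel. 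Finally, the torsion of $M_1$ has trivial kernel: if $X\in T^{1,0}M_1$ satisfies $T_{M_1}(X,\cdot)=0$, then, as $T$ has no mixed components, $T(X,\cdot)=0$ on all of $T^{1,0}\widetilde{M}=T^{1,0}M_1\oplus\mathcal{N}_0$, whence $X\in\mathcal{N}_0\cap T^{1,0}M_1=\{0\}$. This would complete the proof.

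The hard part is not any computation but the bookkeeping of Step~2: one must be sure that the $\nabla^{LC}$-parallel de Rham splitting is genuinely a \emph{complex} product — so that ``K\"ahler de Rham factor'' is meaningful and the Chern data of the factors are literal restrictions of $T$ and $R$ — which is exactly what the $J$-invariance and involutivity of $\mathcal{N}_0$ (coming from $\nabla T=0$) provide; and then to invoke the classical fact that a complete, simply-connected K\"ahler manifold with parallel curvature is a (globally) Hermitian symmetric space.
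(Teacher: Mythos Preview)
Your proposal is correct and follows the same route as the paper: use Lemma~\ref{lemma:4} to get $\nabla$-parallelism of $\mathcal{N}_0$, then the torsion formula from \cite[Lemma~2]{YZ} (as in the proof of Theorem~\ref{thm-imagecodim1}) to upgrade this to $\nabla^{LC}$-parallelism, and finally apply the de Rham decomposition. The paper compresses Steps~2 and~3 into a single sentence, while you spell out the $J$-invariance of the splitting, the Hermitian-symmetric identification of $M_2$, and the triviality of the kernel on $M_1$; one tiny remark is that in your last paragraph the hypothesis ``$T_{M_1}(X,\cdot)=0$'' should be paired with $X\in\mathcal{N}^{M_1}$ (i.e.\ $X\perp\mathcal{W}$) to match the definition of the kernel space, after which your conclusion $X\in\mathcal{N}_0\cap T^{1,0}M_1=\{0\}$ goes through verbatim.
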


\section{Proof of Theorem \ref{thm-dim3}  and \ref{thm-dim4}}

In this section, we will prove Theorem \ref{thm-dim3}  and \ref{thm-dim4}, which give  a classification of CAS manifolds in dimensions $3$ and $4$. For readers benefit, let us restate Theorem \ref{thm-dim3} below:

\begin{theorem}\label{thm-dim3b}
Let $(M^3,g)$ be a compact  CAS manifold, namely a compact Hermitian manifold whose Chern connection has parallel torsion and curvature. Then it is either K\"ahler (hence a locally Hermitian symmetric space) or Chern flat.
\end{theorem}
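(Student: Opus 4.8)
The plan is to combine the structural results of \S 3 with a dimension count on the possible ranks of the image distribution $\mathcal{W}$. Fix a point $p$ and write, as in \S 3, $\mathcal{V} = T^{1,0}_pM \cong \mathbb{C}^3$, with the orthogonal splitting $\mathcal{V} = \mathcal{W} \oplus \mathcal{N}$ and $\mathcal{N} = \mathcal{N}_1 \oplus \mathcal{N}_0$. Let $q = \dim \mathcal{W}$. If $q = 0$ then $T \equiv 0$, so the Chern connection is flat and we are done. If $q = 3$ then $\mathcal{N}_0 \subseteq \mathcal{N} = 0$, so by Theorem \ref{thm-N0} the kernel factor is trivial, which is the genuinely non-Chern-flat, non-K\"ahler case one must rule out. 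If $q = 1$, then $\mathcal{W}$ has codimension $2$; if on the other hand $\dim \mathcal{N}_0 \geq 1$, Theorem \ref{thm-N0} splits off a Hermitian symmetric factor of that dimension, and since in dimension $3$ any lower-dimensional Hermitian symmetric factor is a curve or surface, one reduces to a CAS factor of dimension $\le 2$, which is K\"ahler by the $n \le 2$ remark in the introduction (balanced $\Rightarrow$ K\"ahler in dimension $2$). So after this first bookkeeping step the only survivors are: (i) $q = 2$ (codimension one), and (ii) $q = 3$ with $\mathcal{N}_0 = 0$.

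Case (i), $q = 2$, is handled directly by Theorem \ref{thm-imagecodim1}: either $g$ is Chern flat, or the universal cover is $G \times C$ with $\dim G = 2$ and $C$ a complex space form of dimension $1$. But a two-dimensional complex Lie group with left-invariant Hermitian metric is, by the $n=2$ remark (Chern K\"ahler-like $\Rightarrow$ balanced $\Rightarrow$ K\"ahler in dimension $2$), actually K\"ahler, hence abelian, so $G$ is a $2$-torus and $G \times C$ is K\"ahler; thus in case (i) the manifold is either Chern flat or K\"ahler. Alternatively one can argue that a Chern flat $G^2$ is flat K\"ahler, so the whole product is a locally Hermitian symmetric space. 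Either way case (i) closes.

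The real work, and the step I expect to be the main obstacle, is case (ii): $q = \dim \mathcal{W} = n = 3$ with trivial kernel $\mathcal{N}_0 = 0$. Here $\mathcal{W} = \mathcal{V}$ is flat (contained in $\ker R$ by $(\ref{eq:TR})$ and Lemma \ref{lemma:3}), so the Chern curvature $R$ vanishes identically, and the claim is that in fact $T$ must then also satisfy the Chern-flat conclusion — equivalently, one must show the manifold is Chern flat, i.e.\ that a flat-curvature CAS threefold is covered by a complex Lie group. The curvature being zero, the first Bianchi identity $(\ref{eq:TT})$ says $T$ is a Lie-algebra-valued $2$-form satisfying the Jacobi identity (this is exactly the statement that $\mathfrak{g} = (\mathbb{C}^3, [\,\cdot\,,\cdot\,] := -T)$ is a $3$-dimensional complex Lie algebra). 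Since $\nabla R = 0$ is automatic and $\nabla T = 0$, the data $(\mathcal{V}, T)$ is a parallel tensor, and one invokes the holonomy principle: the restricted holonomy lies in the stabilizer of $T$ inside $\mathsf{U}(3)$. The plan is to show, using the symmetry of $T$ as a $3$-dimensional complex Lie bracket together with the pointwise condition $(\ref{eq:TT})$ and the fact (Lemma \ref{lemma:1}) that $\|T\|^2$ is constant and $\mathfrak{S}$-identities hold, that $T$ is parallel with respect to the full Levi-Civita connection — or, more directly, that the Chern connection has parallel frame, so $\nabla$ is flat as a connection (not just zero curvature but, via Boothby's theorem, a complex Lie group quotient). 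Concretely, I would classify the $3$-dimensional complex Lie algebras compatible with a unitary metric and the quadratic constraint $(\ref{eq:TT})$ (already implied by Jacobi in dimension $3$), and check that in every case the left-invariant metric on the corresponding simply-connected complex Lie group makes the Chern connection flat — i.e.\ the CAS structure is the Chern-flat one. The subtlety is to rule out the possibility that the holonomy acts nontrivially while still preserving $T$, which would correspond to a non-left-invariant, non-K\"ahler CAS structure with vanishing curvature; here one uses Theorem \ref{thm-Ricciflat}'s $\mathsf{Sp}(k)$-holonomy constraint with $1 \le k \le [(n-1)/2] = 1$ and $k \ge 2$ — a contradiction, forcing the holonomy to be trivial, hence $\nabla$ flat, hence (Boothby) the manifold Chern flat. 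This last invocation of Theorem \ref{thm-Ricciflat} is what makes case (ii) collapse, and organizing the argument so that it applies cleanly (after first splitting off all K\"ahler de Rham factors, which is legitimate since those factors are handled separately) is the crux of the proof.
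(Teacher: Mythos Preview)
Your case analysis has the difficulty of the cases inverted, and as a result the main case is missing entirely.

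\textbf{The missed case.} When $q=\dim\mathcal{W}=1$ in dimension $3$, one \emph{never} has $\dim\mathcal{N}_0\geq 1$. Indeed, take $e_3$ spanning $\mathcal{W}$; then $T^1_{\ast\ast}=T^2_{\ast\ast}=0$ by definition of $\mathcal{W}$, and $\eta=0$ forces $T^3_{3i}=0$, so the only nonzero component is $T^3_{12}\neq 0$. Hence for any nonzero $Z=ae_1+be_2\in\mathcal{N}$ one has $T(Z,e_1)=-bT^3_{12}e_3$ or $T(Z,e_2)=aT^3_{12}e_3$ nonzero, so $\mathcal{N}_0=0$. Your treatment of $q=1$ (``if on the other hand $\dim\mathcal{N}_0\geq 1$\ldots'') is therefore vacuous, and the case $q=1$ survives the bookkeeping step unhandled. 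This is precisely the case that carries all the content in the paper's proof: one shows $\mathrm{tr}(\Theta)=0$ from $\nabla T=0$ and $d\theta_{33}=0$, then uses $\nabla R=0$ together with a diagonalization of the remaining $2\times 2$ curvature block to force $R=0$.

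\textbf{The over-worked case.} Conversely, your case (ii) with $q=3$ is immediate: once you observe that $\mathcal{W}=\mathcal{V}$ lies in the kernel of $R$ (by (\ref{eq:TR})), you have $R\equiv 0$, which \emph{is} the definition of Chern flat. There is nothing further to prove; the ensuing discussion of Lie brackets and holonomy is unnecessary. (Also, at $q=0$ the conclusion is K\"ahler, not Chern flat.)

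\textbf{Circularity.} Even if you had redirected your Theorem~\ref{thm-Ricciflat} argument to the genuine case $q=1$, it would be circular: the clause ``$k\geq 2$'' in Theorem~\ref{thm-Ricciflat} is obtained (see the last line of its proof) by appeal to Theorem~\ref{thm-imagecodim}, whose $p=2$ case explicitly invokes ``the same analysis on the Chern curvature tensor $R$ as in the proof of Theorem~\ref{thm-dim3}''. So the $\mathsf{Sp}(k)$ holonomy contradiction cannot be used as a black box here; the underlying curvature computation in the $q=1$ case is exactly what is being asked of you.
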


\begin{proof}  Let us now specialize to dimension $3$ and let $(M^3,g)$ be a CAS manifold. We assume that $g$ is not K\"ahler. Hence the image foliation $\mathcal{W}$ is non-zero. Since $\mathcal{W}$ is contained in the kernel of $R$ by (\ref{eq:TR}), we know that $g$ would be Chern flat if  $\mathcal{W}$ is $3$-dimensional. Also, by Theorem 5, if $\mathcal{W}$ is $2$-dimensional, then $M$ will split off a $1$-dimensional K\"ahler de Rham factor, while the other factor, being $2$-dimensional and balanced, is also K\"ahler. This will imply that $g$ is K\"ahler, a contradiction. So we are only left with the possibility of $\mathcal{W}$ being $1$-dimensional.

Let $e$ be any local unitary frame such that $e_3\in \mathcal{W}$. We will call such a frame {\em admissible}. By (\ref{eq:TR}), we have $R_{3\overline{\ast} \ast \overline{\ast}} =0$. First let us examine the components of $T$ under an admissible frame. We have $T^1_{\ast \ast } = T^2_{ \ast \ast }=0$ by the definition of $\mathcal{W}$. Also, since $\eta=0$, we have
$$ T^3_{31} = \eta_1 - T^2_{21} = \eta_1 =0.$$
Similarly, $T^3_{32}=0$, thus the only possibly non-zero component of $T$ would be $T^3_{12}$. Since $\nabla T=0$, the norm $|T|^2$ must be a constant, and this constant is non-zero since $g$ is not K\"ahler. By scaling the metric $g$ if necessary, we will assume that  $|T^3_{12}|=1$ from now on.

By the argument in the proof of Lemma \ref{lemma:4}, the connection matrix $\theta$ under an admissible frame $e$ satisfy $\theta_{13}=\theta_{23}=0$. Also, we have that the curvature $\Theta$ satisfies that $\Theta_{33}=d\theta_{33}=0$.  By $\nabla T=0$, we have $ \nabla (T(e_1,e_2)) = T(\nabla e_1, e_2) + T(e_1, \nabla e_2)$, which leads to
\begin{equation}
 d \log T^3_{12} =  \theta_{11} + \theta_{22} - \theta_{33} \label{eq:logT}
\end{equation}
Taking differential in the above identity, and use the fact $d\theta_{33}=0$, we get
\begin{equation}
 tr (\Theta )  = d \,tr (\theta) = d( \theta_{11} + \theta_{22} + \theta_{33}) =  d(  \theta_{11} + \theta_{22} - \theta_{33}) = 0. \label{eq:Ricci}
\end{equation}
That is, {\em the Chern curvature of $g$ is Ricci flat}. This is a crucial point for us, and we will use it along with the fact $\nabla R=0$ to force $R=0$, thus completing the proof of Theorem \ref{thm-dim3}. (In case that $M$ is complex homogenous, and admits an invariant volume form, applying  Theorem A and Proposition 5.1 of \cite{HK}, the compactness of $M$ and the flatness of Ricci implies that $M$  is parallelizable, which is Chern flat with respect to the induced left invariant metric. The analysis below shows directly  the result for the case when $M$ is CAS only.)

Let $e$ be an admissible frame. Since $\Theta_{33}= d\theta_{33}=0$, we may rotate $e_3$ and assume that $\nabla e_3=0$. Fix this $e_3$, then we know that $\theta_{3i}=\theta_{i3}=0$ for any $1\leq i\leq 3$. The curvature components $R_{i\overline{j}k\overline{\ell}}$ are zero if any index is $3$. Since $R$ obeys all the K\"ahler symmetries, we only have the following components:
$$ A=R_{1\overline{1}1\overline{1}}, \ \ B=R_{1\overline{1}2\overline{2}}, \ \ C=R_{2\overline{2}2\overline{2}}, \ \ D=R_{1\overline{2}1\overline{2}}, \ \ E=R_{1\overline{1}1\overline{2}}, \ \ F=R_{2\overline{2}1\overline{2}}.$$
Since $tr(\Theta )=0$, we always have $A+B=C+B=E+F=0$. At any fixed point $p\in M$, by a unitary change of $\{ e_1, e_2\}$, we may assume that $A$ equals the maximum value of holomorphic sectional curvature at $p$. This implies that $E=0$. Replace $e_1$ by an appropriate $\rho e_1$ where $|\rho |=1$, we may further assume that $D=|D|$. This kind of choice can be made at every point in $M$, and clearly we can choose local unitary frame $e$ so that $e_3$ is parallel and under $\{ e_1, e_2\}$ the curvature components satisfy the above requirements. In this case, we have
$$ A=C=-B=H, \ \ D=|D| \geq 0, \ \ E=F=0. $$
It is easy to see that $3H\geq |D|$, and the maximum  (the minimum) of the holomorphic sectional curvature at the point $p$ is equal to $H$ ($-\frac{1}{2}(H+|D|)$, respectively). By  $\nabla R=0$, we know that both  $H$ and $|D|$ are constants.  Our goal here is to show that the metric $g$ has vanishing Chern curvature, or equivalently, $H=D=0$.  The assumption $\nabla R=0$ yields
\begin{equation*}
d(R_{i\overline{j}k\overline{\ell}}) = \sum_r \big( \theta_{ir}R_{r\overline{j}k\overline{\ell}} - \overline{\theta_{rj}}R_{i\overline{r}k\overline{\ell}} + \theta_{kr}R_{i\overline{j}r\overline{\ell}} - \overline{\theta_{r\ell}}R_{i\overline{j}k\overline{r}} \big).
\end{equation*}
By letting $(ijk\ell) = (1212)$ and $(1112)$, respectively, we get
\begin{equation}  0 = 2(\theta_{11}-\theta_{22})D, \ \ \ \ \ 0=\theta_{12}(2B-A)- \theta_{21}D = -3H\theta_{12} -D \theta_{21}.
\label{eq:nablaR}
\end{equation}

If $D>0$, then $\theta_{11}=\theta_{22}$, and $\theta_{21} = -\frac{3H}{D} \theta_{12}$, so
$$ \Theta_{11} = d\theta_{11} - \theta_{12} \wedge \theta_{21} = d\theta_{11}.$$
Similarly, $\Theta_{22}=d\theta_{22}$, so $\Theta_{11}=\Theta_{22}$. Since $\Theta_{11}+\Theta_{22}=0$, we get $\Theta_{11}=0$. Hence $H=0$. By $3H\geq |D|$, we get $D=0$, contradicting to the hypothesis $D>0$. This means that we must have $D=0$.

If $H>0$, then the second equality in (\ref{eq:nablaR}) yields $\theta_{12}=0$. So
$$ \Theta_{12}=d\theta_{12} - \theta_{11}\wedge \theta_{12} - \theta_{12} \wedge \theta_{22} = 0.$$
On the other hand,
$$\Theta_{12} = D\varphi_1\wedge \overline{\varphi_2} + B\varphi_2\wedge \overline{\varphi_1} = - H \varphi_2\wedge \overline{\varphi_1} \neq 0,$$
a contradiction. So we must have $H=0$ as well. The vanishing of both $H$ and $D$ means that the metric $g$ is Chern flat, and we have  completed the proof of Theorem \ref{thm-dim3b}, which is Theorem \ref{thm-dim3}. \end{proof}

Again for readers benefit, let us restate Theorem \ref{thm-dim4} below:

\begin{theorem}\label{thm-dim4b}
Let $(M^4,g)$ be a CAS manifold, namely a compact Hermitian manifold whose Chern connection has parallel torsion and curvature. Then it is either K\"ahler (hence a locally Hermitian symmetric space), or Chern flat, or the universal cover is holomorphically isometric to the product $C\times G$, where $C$ is a complex space form of dimension $1$ and $G$ a complex Lie group of dimension $3$ (with left invariant metric).
\end{theorem}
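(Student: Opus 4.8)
The plan is to argue by a case analysis on $q:=\dim_{\mathbb C}\mathcal W$, where $\mathcal W$ is the image distribution of the Chern torsion $T$, handling the cases with $q$ or the torsion-kernel dimension $s:=\dim_{\mathbb C}\mathcal N_0$ large by Theorems~\ref{thm-imagecodim1} and~\ref{thm-N0}, and reducing the genuinely new cases to the curvature computation already carried out in the proof of Theorem~\ref{thm-dim3b}. Two observations will be used throughout. First, the balancedness $\eta=0$ holds on the compact $M$ (Lemma~\ref{lemma:1}), hence on every de Rham factor of $\widetilde M$. Second, the proof of Theorem~\ref{thm-dim3b} uses compactness only to guarantee $\eta=0$, so that proof applies verbatim to any complete, simply-connected Hermitian threefold carrying a CAS structure with $\eta=0$.

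The cases $q\in\{0,4\}$ are immediate: $q=0$ gives $T\equiv0$, so $g$ is K\"ahler and, being CAS, a locally Hermitian symmetric space; $q=4$ gives $\mathcal V=\mathcal W\subseteq\ker R$ by~\eqref{eq:TR}, so $R\equiv0$ and $g$ is Chern flat. If $q=3$ this is the codimension-one situation and Theorem~\ref{thm-imagecodim1} gives that $g$ is Chern flat or that $\widetilde M\cong G^3\times C$ with $C=\mathbb P^1$ or $\mathbb D$, which is the third alternative of the theorem (the flat case $C\cong\mathbb C$ being a Chern-flat product). For $q=1$, and for $q=2$ with $s\ge1$, I would first show $s=1$. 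As in the proof of Lemma~\ref{lemma:4} one sees that (using $\eta=0$) $\mathcal N_0$ is the common radical on $\mathcal N$ of the skew forms $(Z,Z')\mapsto\langle T(Z,Z'),\overline W\rangle$, $W\in\mathcal W$; when $q=2$ we have $\dim_{\mathbb C}\mathcal N=2$ and $\mathcal N_0=\mathcal N$ would force $T\equiv0$, so $s\le1$; when $q=1$ a single skew form on $\mathbb C^3$ is in play, its rank is even, and rank $0$ would again force $T\equiv0$, so $s=1$. Thus $s=1$ in both situations, and Theorem~\ref{thm-N0} splits $\widetilde M\cong M_1\times M_2$ with $M_2$ a one-dimensional Hermitian symmetric space --- a one-dimensional complex space form --- and $M_1$ a complete, simply-connected threefold with a CAS structure, trivial torsion kernel, and $\eta=0$; since the K\"ahler factor $M_2$ has vanishing Chern torsion, $\mathcal W(M_1)=\mathcal W(M)$, so $\dim_{\mathbb C}\mathcal W(M_1)=q\in\{1,2\}$. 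If $\dim_{\mathbb C}\mathcal W(M_1)=2$, Theorem~\ref{thm-imagecodim1} applied to $M_1$ gives either that $M_1$ is Chern flat or that $M_1\cong G^2\times C'$; in the latter case the $(1,0)$-directions of the K\"ahler factor $C'$ would lie in the torsion kernel of $M_1$, which is trivial --- contradiction --- so $M_1$ is Chern flat. If $\dim_{\mathbb C}\mathcal W(M_1)=1$, the $\dim\mathcal W=1$ part of the proof of Theorem~\ref{thm-dim3b}, applicable since $\eta(M_1)=0$, gives $R(M_1)\equiv0$. In either case $M_1$ is a complete, simply-connected Chern-flat Hermitian manifold with parallel torsion, hence a connected, simply-connected complex Lie group $G^3$ with a left-invariant metric (\cite{Boothby}; cf.\ the proof of Theorem~\ref{thm-imagecodim1}), and $\widetilde M\cong C\times G^3$ as asserted.

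The remaining case, $q=2$ with $s=0$, is the main obstacle --- no de Rham reduction is available --- and I would show it forces $g$ Chern flat. Choose a local unitary frame with $e_3,e_4$ a parallel frame of the parallel, curvature-kernel distribution $\mathcal W$ and $e_1,e_2$ any frame of $\mathcal N$, so the connection matrix $\theta$ is block-diagonal with vanishing $\mathcal W$-block. From $\eta=0$ one gets $T|_{\mathcal W\times\mathcal W}=0$ and that the endomorphisms $\phi_i:=T(e_i,\,\cdot\,)|_{\mathcal W}$, $i=1,2$, are traceless, so the torsion is encoded by $\phi_1,\phi_2\in\mathfrak{sl}(2,\mathbb C)$ together with $V:=T(e_1,e_2)\in\mathcal W$. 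If $V=0$, then $s=0$ forces $\phi_1,\phi_2$ linearly independent, while the choice $(i,j,k)=(1,2,a)$, $a\in\{3,4\}$, in the first Bianchi identity~\eqref{eq:TT} forces $[\phi_1,\phi_2]=0$; but two commuting traceless $2\times2$ complex matrices are linearly dependent, a contradiction, so $V\neq0$. Now $\nabla T=0$ gives $\nabla V=(\theta_{11}+\theta_{22})V$; applying the exterior covariant derivative and using that the connection induced on $\mathcal W$ is flat ($\mathcal W\subseteq\ker R$) yields $d(\theta_{11}+\theta_{22})=0$, which, as the $\mathcal W$-block of $\theta$ vanishes, says the Chern Ricci curvature of $g$ is zero. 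At this point $R$ is a K\"ahler-like (Lemma~\ref{lemma:1}), Ricci-flat, parallel curvature tensor supported on $\mathcal N\cong\mathbb C^2$ --- exactly the input of the last part of the proof of Theorem~\ref{thm-dim3b} --- so that argument gives $R\equiv0$, i.e.\ $g$ is Chern flat.

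Assembling the cases, $(M^4,g)$ is K\"ahler, hence a locally Hermitian symmetric space, or Chern flat, or its universal cover is holomorphically isometric to $C\times G$ with $C$ a one-dimensional complex space form and $G$ a three-dimensional complex Lie group with a left-invariant metric. The delicate point is the case $q=2$, $s=0$: one must first exclude $V=0$ via the $\mathfrak{sl}(2,\mathbb C)$ observation, and then establish Ricci-flatness, which is what brings the problem within the scope of the threefold curvature argument.
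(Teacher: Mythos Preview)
Your proof is correct, and the overall case analysis on $(q,s)$ is equivalent to the paper's reduction. The genuine divergence is in the heart case $q=2$, $s=0$. The paper does not go through Ricci-flatness here at all: instead it observes that for any parallel $Z,W\in\mathcal W$ the $1$-form $\psi(\cdot)=\langle T(\cdot,W),\overline Z\rangle$ restricted to $\mathcal N$ is parallel, so if nonzero it produces a parallel direction in the rank-$2$ bundle $\mathcal N$ and forces a parallel frame, hence Chern flatness. Assuming non-flatness, all $T^{\alpha}_{i\beta}$ vanish ($i\le 2$, $\alpha,\beta\ge 3$); after normalizing $T^4_{12}=0$ one finds $T^4_{\ast\ast}=0$, contradicting $e_4\in\mathcal W$. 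Thus the paper reaches a contradiction without ever invoking the curvature computation from the threefold proof. Your route --- excluding $V=0$ via the Bianchi identity and the fact that commuting elements of $\mathfrak{sl}(2,\mathbb C)$ are proportional, then deducing Ricci-flatness from $\nabla V=(\theta_{11}+\theta_{22})V$ and feeding this into the $2\times2$ curvature analysis of Theorem~\ref{thm-dim3b} --- is a valid and self-contained alternative; it has the virtue that your handling of $V=0$ is arguably crisper than the paper's local-product argument for $a=b=0$. What you lose is the ``parallel $1$-form $\Rightarrow$ parallel direction'' mechanism, which is the paper's workhorse and is exactly what gets recycled in the five-dimensional discussion and in the proof of Theorem~\ref{thm-imagecodim}.
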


\begin{proof} Let $(M^4,g)$ be a CAS manifold. First, we will assume that $g$ is not K\"ahler, and by Theorem \ref{thm-N0}, we will assume that the kernel $\mathcal{N}_0$ of the Chern torsion tensor $T$ is trivial. Furthermore, by Theorem \ref{thm-imagecodim1}, we may assume that the image distribution $\mathcal{W}$ of $T$ has dimension $1$ or $2$.

If $\mathcal{W}$ has dimension $1$, then we claim that $\mathcal{N}_0$ will be non-zero. To see this, let $e$ be a local unitary frame such that $\mathcal{W}$ is spanned  by $e_4$. That is, $T^i_{\ast \ast}=0$ for $1\leq i\leq 3$. This implies  that $T^4_{4i}=0$ for any $i$ as $\eta =0$. So $(T^4_{ij})$ becomes a skew-symmetric matrix on $\mathcal{N}=\mathcal{W}^{\perp}\cong {\mathbb C}^3$. It must have a non-trivial kernel space, which is contained in $\mathcal{N}_0$, contradicting to the assumption that $\mathcal{N}_0=0$.

We are left with the case that $\mathcal{W}$ has dimension $2$. Let us choose a local unitary frame $e$ so that $\mathcal{W}$ is spanned by $\{ e_3, e_4\}$. We may assume that $\nabla e_3=\nabla e_4=0$ since $\mathcal{W}$ is contained in the kernel of $R$. We will call such a local frame {\em admissible}. Note that we have $T^{3}_{34}=T^4_{34}=0$ by $\eta =0$.

Let $a=|T^3_{12}|$ and $b=|T^4_{12}|$. Since  $T^3_{12}=\langle T(e_1,e_2), \overline{e}_3\rangle$, which depends only on $e_1\wedge e_2$, so its absolute value $a$ is independent of the choice of an admissible frame, thus is a constant. Similarly, $b$ is also a constant. We note that the distribution $\mathcal{N}$, as the orthogonal complement of $\mathcal{W}$, is parallel under $\nabla$.

If both $a$ and $b$ are zero, then $T(e_1,e_2)=0$, (and of course $T(e_i, \overline{e}_j)=0$) hence $\mathcal{N}$ is integrable. In this case, $M$ is locally the product of two complex surfaces, with perpendicular leaves, and by (\ref{eq:ts}) we have $S_{e_1} e_2=S_{e_2}e_1=0=S_{e_3} e_4=S_{e_4} e_3$, thus locally $(M, g)$ is a product  Hermitian manifold. This will force the torsion tensor to split, in fact $T=0$ hence a contradiction. So either $a$ or $b$ must be non-zero. By a constant unitary change of $\{ e_3, e_4\}$ if necessary, we may assume that $T^4_{12}$ vanishes at a point, hence we may assume that $b=0$ and $a>0$ from now on.

Denote by $\varphi$ the coframe dual to $e$, and consider the local $(1,0)$-form $\psi$ given by $\psi = T^3_{13} \varphi_1 + T^3_{23}\varphi_2$. By the fact that $T^3_{34}=0$, we know that  $\psi = \langle T( \cdot , e_3), \overline{e}_3\rangle$. Therefore, $\psi$ is parallel under  $\nabla$ as both $T$ and $e_3$ are. If $\psi \neq 0$, then $\mathcal{N}$ contains a parallel local section (given by the dual vector of $\psi$), hence by a unitary change of $\{ e_1, e_2\}$ we may assume that $\nabla e_1 =\nabla e_2 =0$, leading to the conclusion that $g$ is Chern flat. So we may always assume that $\psi =0$, or equivalently, $T^3_{\ast 3}=0$, and by the same argument, $T^4_{\ast 4}=0$.

More generally for any given (local) parallel holomorphic vectors $Z, W$,  define $\psi(\cdot)=\langle T(\cdot, W), \overline{Z}\rangle$. We have
\begin{lemma}\label{lemma:41}
The $(1, 0)$-form $\psi$ defined as above $\psi$ is a parallel holomorphic 1-form.
\end{lemma}
\begin{proof} For any local complex tangent vectors $U$, $V$ with $V$ being of type $(1,0)$, we have
$$(\nabla_U\psi)(V)=U(\psi(V))-\psi(\nabla_U V)=U\langle T(V, W), \overline{Z}\rangle -\langle T(\nabla_U V, W), \overline{Z}\rangle=0.$$
So $\psi$ is parallel and holomorphic.
\end{proof}

Applying the same argument as in the paragraph before the lemma, we know that $T^3_{i4} = T^4_{i3}=0$ hold for any $i$. In particular, $T^4_{\ast \ast }=0$, contradicting to the fact that $e_4\in \mathcal{W}$. This shows that when $\mathcal{N}_0=0$ and $\dim \mathcal{W} =2$, the manifold has to be Chern flat, thus we have completed the proof of Theorem \ref{thm-dim4b}, which is Theorem \ref{thm-dim4}. \end{proof}

\section{A discussion in dimension 5}

Having seen the restrictive nature of CAS manifolds in dimensions $3$ and $4$, one naturally wonder if the phenomenon will persist in higher dimensions. In this section, we give a brief discussion in dimension $5$, where the previous line of argument runs into a problem.

To simplify things, we will call a CAS manifold $(M^n, g)$ {\em trivial} if all of its de Rham factors are either K\"ahler (hence Hermitian symmetric) or Chern flat (i.e., complex Lie groups). In this terminology, Theorem \ref{thm-dim3} and \ref{thm-dim4} simply say that all CAS manifolds are trivial when $n\leq 4$. For $n\geq 5$, the question is whether there are non-trivial examples, and if there are, give characterization or even classification of them.

Now let us assume that $(M^5,g)$ is a CAS manifold. We will assume that $g$ is not K\"ahler (i.e., $T\neq 0$), and the kernel $\mathcal{N}_0$ of $T$ is trivial (i.e., $M^5$ does not have a K\"ahler de Rham factor). By Theorem \ref{thm-imagecodim1}, we may also restrict ourselves to the $1\leq \dim \mathcal{W}\leq 3$ cases, where $\mathcal{W}$ is  the image distribution  of $T$.

Since $\mathcal{W}$ is parallel under $\nabla$ and is contained in the kernel of $R$, around any given point $p\in M$, there always exists a local unitary frame $e=\{e_1, \ldots , e_5\}$ so that the last $\dim \mathcal{W}$ terms of $e$ are all parallel and they span $\mathcal{W}$. We will call such a frame {\em admissible}.

\vspace{0.3cm}

{\bf Case 1: $\dim \mathcal{W}=3$.}

Let $e=\{ e_1, \ldots , e_5\}$ be a local admissible frame, where $\nabla e_3=\nabla e_4=\nabla e_5=0$ and $\{ e_3, e_4, e_5\}$ spans $\mathcal{W}$. The $(1,0)$-form $\psi =T^3_{13}\varphi_1 + T^3_{23}\varphi_2$ is the difference between $\psi_1 =\langle T(\cdot , e_3), \overline{e}_3\rangle$ and $\psi_2= T^3_{43}\varphi_4 + T^3_{53}\varphi_5$. Since $T$, $e_3$, $\varphi_4$, $\varphi_5$ are all parallel, we know that $\psi_1$ and $\psi_2$, hence $\psi$, is parallel under $\nabla$. If $\psi \neq 0$, then it will give us a parallel direction in $\mathcal{N}=\mathcal{W}^{\perp}$, hence we may choose $e$ so that $\nabla e_1=\nabla e_2=0$, this yields that $M^5$ is Chern flat. Therefore, when $M^5$ is not Chern flat, we must have $T^3_{13}=T^3_{23}=0$. Replacing $e_3$ by $e_3+ae_4+be_5$ for any fixed constants $a$ and $b$, the same argument implies that
\begin{equation}
 T^j_{i1}=T^j_{i2}=0, \ \ \ \ \forall \ 3\leq i,j\leq 5. \label{eq:Tij=0}
 \end{equation}
This implies that there must be $i$ so that $T^i_{12}\neq 0$, as otherwise we will have $e_1\in \mathcal{N}_0$, a  contradiction. By a constant unitary change of $\{ e_3, e_4, e_5\}$ if necessary, we may assume that $T^3_{12}\neq 0$ while $T^4_{12}=T^5_{12}=0$. Note that  $a=|T^3_{12}|$ is a constant. By (\ref{eq:TT}), we have
 $$ \sum_{r=1}^5 \big( T^r_{ij}T^{\ell}_{kr} + T^r_{jk}T^{\ell}_{ir}+ T^r_{ki}T^{\ell}_{jr} \big)  =0.$$
Let $i=1$, $j=2$, and $k, \ell \geq 3$, then the second and third term in the parenthesis vanish by (\ref{eq:Tij=0}), hence
$$ T^3_{12}T^{\ell}_{k3} =0 \ \Longrightarrow \ T^{\ell}_{k3} =0 \ \ \ \  \forall \ 3\leq k,\ell \leq 5. $$
So the only possibly non-zero components of $T$ are $T^3_{12}\neq 0$ and $T^{\ast}_{45}$. By $\eta =0$, we get $T^4_{45} = - T^3_{35}=0$, $T^5_{45}= - T^3_{43} =0$. So the only possibly non-zero components of $T$ are $T^3_{12}$ and $T^3_{45}$. In particular, $T^4_{\ast \ast }=T^5_{\ast \ast }=0$, contradicting with the fact that $e_4, e_5 \in \mathcal{W}$. So in the $\dim \mathcal{W} =3$ case the manifold  must be Chern flat.

\vspace{0.3cm}

 {\bf Case 2: $\dim \mathcal{W}=2$.}

Let $e=\{ e_1, \ldots , e_5\}$ be a local admissible frame, where $\nabla e_4=\nabla e_5=0$ and $\{ e_4, e_5\}$ spans $\mathcal{W}$. Since $\eta =0$, we get $T^4_{45}=T^5_{45}=0$. For any non-trivial constant linear combination $X=ae_4+be_5$, since $\nabla X=0$, we know that  $\psi_X = \langle T( \cdot , X), \overline{X}\rangle $ is parallel, and $\psi_X = \sum_{r=1}^3 T^X_{rX} \varphi_r $. If $\psi_X \neq 0$, then it corresponds to a parallel direction in $\mathcal{N}$. Note that if $\mathcal{N}$ contains two linearly independent parallel directions, then it will admit a parallel frame, hence $M^5$ will be Chern flat. Assume that $M$ is not Chern flat, then  either (1): $\psi_X=0$ for all $X$, or (2): some $\psi_X\neq 0$ while all other $\psi_Y$ is proportional to this $\psi_X$.

{\em Subcase (1).} In this case we have $T^{\alpha }_{i\beta}=0$ for any $1\leq i\leq 3$ and any $4\leq \alpha , \beta \leq 5$. Restrict $T^4$ on $\mathcal{N}\cong {\mathbb C}^3$. It is a parallel skew-symmetric form on $\mathcal{N}$. Denote by $V$  its kernel space. If $V=\mathcal{N}$, then $T^4_{ij}=0$ for all $1\leq i,j\leq 3$ hence $T^4_{\ast \ast }=0$, contradicting with $e_4\in \mathcal{W}$. So $V$ must be one-dimensional, leading to a parallel direction in $\mathcal{N}$. Similarly, restricting $T^5$ on $\mathcal{N}$, and its one-dimensional kernel $V'$ also gives a parallel direction in $\mathcal{N}$. If $V\neq V'$, then $\mathcal{N}$ contains two parallel directions, hence it admits a parallel frame. This implies that $M$ is Chern flat. So we may assume that $V=V'$. Without loss of generality, assume $e_1\in V=V'$. Then we have $T^{\alpha}_{1\ast }=0$, so $e_1\in \mathcal{N}_0$ by definition, which contradicts to our assumption. This completes the discussion of {\it Subcase (1)}.

{\em Subcase (2).} In this case, without loss of generality, we may assume that $\psi_{e_4}\neq 0$ and corresponds to $e_3\in \mathcal{N}$. So we have $\nabla e_3=0$, and $T^4_{34}\neq 0$, while $T^4_{14}=T^4_{24}=0$. Also, since for any constant $\lambda$, $\psi_X$ is always proportional to $\psi_{e_4}$ where $X=e_4+\lambda e_5$. This means that $T^X_{1X}=T^X_{2X}=0$. By the arbitrariness of $\lambda$, we get $T_{1\alpha }^{\beta}=T_{2\alpha }^{\beta}=0$ for any $4\leq \alpha , \beta \leq 5$.

Let us again restrict $T^4$ on $\mathcal{N}\cong {\mathbb C}^3$. It is a parallel skew-symmetric bilinear form on $\mathcal{N}$. So its kernel space $V\subseteq \mathcal{N}$ is either one-dimensional or $V=\mathcal{N}$. If $V=\mathcal{N}$, we get $T^4_{ij}=0$ for $1\leq i,j\leq 3$. If $\dim V=1$, then since $V$ is parallel, if it does not contain $e_3$, then $\mathcal{N}$ will have two parallel directions, hence admits a parallel frame, which leads to $M$ being Chern flat. So we must have $e_3\in  V$. Either way, we will have $T^4_{3i}=0$ for $i=1,2$. By the same token, $T^5_{3i}=0$, $i=1,2$. So the only possibly non-zero components of $T$ are $T^{\alpha}_{3\beta}$ and $T^{\alpha}_{12}$, where $4\leq \alpha , \beta \leq 5$, and $e_3$, $e_4$, $e_5$  are parallel.

Note that now we are in the situation of the proof of Theorem \ref{thm-dim3}. The only possibly non-zero components of $R$ are $R_{i\overline{j}k\overline{\ell}}$ with $1\leq i,j,k,\ell \leq 2$. The curvature tensor obeys all the K\"ahler symmetries. We claim that it must be Ricci flat.

Note that if both $T^4_{12}$ and $T^5_{12}$ were zero, then $e_1$ and $e_2$ would belong to $\mathcal{N}_0$, a contradiction. Without loss of generality, let us assume that $T^4_{12}\neq 0$. This value depends only on $e_1\wedge e_2$, so its absolute value is a constant. Now if we replace $e_i$ by $\rho_ie_i$, $i=1,2$, where $\rho_i$ are smooth functions with $|\rho_i|=1$, we may assume that $T^4_{12}=|T^4_{12}|$ is a constant. Since $\nabla T=0$, we get
$$ 0 = d(T^4_{12}) = \sum_{r=1}^5 \big( T^4_{r2}\theta_{1r} + T^4_{1r}\theta_{2r} - T^r_{12}\theta_{r4}\big) = T^4_{12}(\theta_{11}+\theta_{22}).$$
This implies that $\theta_{11}+\theta_{22}=0$, hence $\mbox{tr}(\Theta)=0$, and the claim is proved.

Next we follow the argument in the proof of Theorem \ref{thm-dim3}. By choosing $e_1$ to be the direction that realizes the maximum value of the holomorphic sectional curvature $H$ at each point, we have $R_{1\overline{1}1\overline{2}}=0$, hence $R_{2\overline{2}1\overline{2}}=0$ by the Ricci-flatness. While $R_{1\overline{1}1\overline{1}}=R_{2\overline{2}2\overline{2}}=-R_{1\overline{1}2\overline{2}}=a$ is a constant, and $|D|$ is also a constant where $D=R_{1\overline{2}1\overline{2}}$. Since Ricci-flatness implies that the scalar curvature is zero, and the scalar curvature is (a positive multiple of) the average value of $H$ by Berger's formula, so if $a\leq 0$ then $H\equiv 0$ and $M$ is Chern flat. Therefore, we may assume that $a>0$ from now on.

If we replace $e_i$ by $\rho_ie_i$, $i=1,2$, where $\rho_i$ are smooth functions with $|\rho_i|=1$. Then $T^4_{12}$ is changed to $\rho_1\rho_2T^4_{12}$, while $D$ is changed to $\rho_1^2 \overline{\rho_2^2}D$. Therefore, we can choose $e_1$ and $e_2$ so that $T^4_{12}=|T^4_{12}|>0$ and $D=|D|\geq 0$ at the same time. So we have $\theta_{11}+\theta_{22}=0$. Since $\nabla R=0$, we get
$$ 0 = dD = 2 \sum_{r=1}^5 \big( R_{r\overline{2}1\overline{2}} \theta_{1r} -  R_{1\overline{r}1\overline{2}} \theta_{r2} \big) = 2D (\theta_{11} - \theta_{22}). $$
So if $D\neq 0$, then $\theta_{11}=\theta_{22}=0$. Applying $\nabla R=0$ to $R_{1\overline{1}1\overline{2}}$, we get
$$ 0 = dR_{1\overline{1}1\overline{2}} =  \sum_{r=1}^5 \big( 2 R_{r\overline{1}1\overline{2}} \theta_{1r} -  R_{1\overline{r}1\overline{2}} \theta_{r1} - R_{1\overline{1}1\overline{r}} \theta_{r2} \big) = -3a\theta_{12} - D \theta_{21}. $$
This gives us $\theta_{12} = -\frac{D}{3a}\theta_{21}$. If $D=0$, then $\theta_{12}=0$, so
$$ \Theta_{12}=d\theta_{12} - (\theta_{11} -\theta_{22})\wedge \theta_{12} = 0,$$
hence $-a=R_{1\overline{1}2\overline{2}} = R_{2\overline{1}1\overline{2}} = \Theta_{12}(e_2, \overline{e}_1) = 0$, a contradiction. If  $D\neq 0$, then we have $\theta_{11}=0$ and $\theta_{12}$ proportional to $\theta_{21}$, so
$$ \Theta_{11} = d\theta_{11} - \theta_{12} \wedge \theta_{21} = 0,$$
which leads to $a=R_{1\overline{1}1\overline{1}}=0$, a contradiction. So either way, the assumption that $R\neq 0$ leads to a contradiction, thus the manifold $M$ must be Chern flat. This completes the proof of the case.

\vspace{0.3cm}

 {\bf Case 3: $\dim \mathcal{W}=1$.}

 Let $e=\{ e_1, \ldots , e_5\}$ be a local unitary frame so $\nabla e_5=0$ and $e_5$ spans $\mathcal{W}$. By $\eta=0$, we have $T^5_{i5}=0$. So $T^5$ is a parallel skew-symmetric form on $\mathcal{N}\cong {\mathbb C}^4$. Denote by $A$ the $4\times 4$ matrix $(T^5_{ij})$, where $1\leq i,j\leq 4$. If we  change the basis $\{e_1, \ldots , e_4\}$ by  a unitary matrix $B$, then $A$ is changed to $BA \,^t\!B$. As is well known, we can choose $e$ so that $A$ takes the following canonical form:
 $$ A = \left[ \begin{array}{cc} aE & 0 \\ 0 & bE \end{array} \right] , \ \ \ \ \ \ \mbox{where} \ \ \ \ \ E = \left[ \begin{array}{cc} 0 & 1\\ -1 & 0 \end{array} \right] $$
 and $a\geq b\geq 0$. In other words, under the frame $e$, the parallel $2$-form $\Phi = \sum_{i,j}T^5_{ij}\varphi_i\wedge \varphi_j$ is given by
 $$ \Phi = 2a \varphi_1\wedge \varphi_2 + 2b \varphi_3 \wedge \varphi_4.$$
 We have $b>0$ since otherwise $\mathcal{N}_0$ will be non-trivial. Also, $a^2+b^2=\frac{1}{2}|T|^2$ is a constant. By looking at the norm of the parallel $(4,0)$-form $\Phi \wedge \Phi $, we know that $|ab|=ab$ is a constant, so both $a$ and $b$ are constants. By $\nabla T=0$, we have
 $$ 0=dT^5_{13} = \sum_{r=1}^5 \big( T^5_{r3}\theta_{1r} + T^5_{1r}\theta_{3r} - T^r_{13}\theta_{r5} \big) = -b \theta_{14} + a \theta_{32}.$$
 Similarly, using $T^5_{24}$ we get $b\theta_{23}-a\theta_{41}=0$, and taking conjugate, it yields $-b\theta_{32}+a\theta_{14}=0$. So if $a>b$, then we must have $\theta_{14} = \theta_{23}=0$. Similarly, we get $\theta_{13}=\theta_{24}=0$. That is, the matrix $\Theta$ is block diagonal. Each block is Ricci flat, and by the same argument as in the proof of Theorem \ref{thm-dim3} or in the case 2(2) above, we can utilize the fact that $T$ and $R$ are parallel to conclude that $R=0$.

 Now we are only left with the case that $a=b$. In this case we have $4a^2= |T|^2$, so $a>0$ is a global constant. $T$ gives a holomorphic symplectic structure on $\mathcal{N}$, and locally there always exists unitary frame $e$ so that $\nabla e_5=0$ and $A=(T^5_{ij})$ takes the form
 $$ A = a \left[ \begin{array}{cc} E & 0 \\ 0 & E \end{array} \right] .$$
 We will call such a frame {\em admissible}. Under such a frame, utilizing the fact that $\nabla T=0$, we derive as before that the connection matrix $\theta$ obeys the following symmetries:
 \begin{equation}
 \theta_{11}+\theta_{22}=0, \ \ \ \theta_{33}+\theta_{44}=0, \ \ \ \theta_{14}-\theta_{32}=0,\ \ \ \theta_{13}+\theta_{42}=0.
 \end{equation}
By the structure equation $\Theta = d\theta -\theta \wedge \theta$, it yields that the curvature matrix $\Theta$ under $e$ also satisfy the same symmetries:
\begin{equation}
 \Theta_{11}+\Theta_{22}=0, \ \ \ \Theta_{33}+\Theta_{44}=0, \ \ \ \Theta_{14}-\Theta_{32}=0,\ \ \ \Theta_{13}+\Theta_{42}=0,
 \end{equation}
or equivalently, the (upper $4\times 4$ block of the) matrix $\Theta$ is in the form
\begin{equation}
 \Theta = \left[ \begin{array}{cccc} x & y & \alpha & \beta \\ -\overline{y} & \overline{x} & - \overline{\beta} & \overline{\alpha} \\ - \overline{\alpha} & \beta & z & w \\
 -\overline{\beta} & - \alpha & - \overline{w} & \overline{z}\end{array} \right] ,  \label{eq:fivefold}
 \end{equation}
where $\overline{x} = -x$ and $\overline{z} = -z$.

\vspace{0.3cm}

We do not know if in this last situation the Chern curvature could be non-vanishing or not. If it could, then the Hermitian fivefold $M^5$ would have $\dim \mathcal{W}=1$, $T$ gives a parallel holomorphic symplectic structure on $\mathcal{N}=\mathcal{W}^{\perp}$, and with the Chern curvature obeying the above special type of unitary and symplectic symmetry, which when interchanging the second and third elements of the basis, the curvature matrix will be a $4\times 4$ skew-Hermitian matrix in the following form:
 \begin{equation} \label{USP4}
 \left[ \begin{array}{cc} A & B \\ - \overline{B} & \overline{A}  \end{array} \right],  \ \ \mbox{where} \ \  \overline{ ^t\!A} = -A, \ ^t\!B =B.
 \end{equation}

Such a Ricci-flat fivefold, if exists (and not Chern flat), will certainly be a very interesting example of non-K\"ahler Calabi-Yau manifold, with rich and highly restrictive metric structure. In summary, the discussion of this section gives us the following

\begin{theorem}\label{thm-dim5}
Let $(M^5,g)$ be a CAS manifold. Then either it is trivial (i.e., each de Rham factor is either K\"ahler or Chern flat), or (if exists) it is an irreducible manifold with $\dim \mathcal{W}=1$ such that $T$ gives a parallel holomorphic symplectic structure on $\mathcal{N}=\mathcal{W}^{\perp}$ and the Chern curvature $R$ is non-flat and obeying the above $\mathsf{USp}_4({\mathbb C})$-symmetry, namely the symmetry given in (\ref{eq:fivefold}) or (\ref{USP4}).
\end{theorem}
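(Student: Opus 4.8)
The plan is to run a case analysis on $d=\dim\mathcal{W}$, where $\mathcal{W}$ is the image distribution of the Chern torsion $T$. First I would reduce to the generic situation: if $g$ is K\"ahler there is nothing to prove, and by Theorem~\ref{thm-N0} I may split off the Hermitian symmetric de Rham factor corresponding to the kernel space $\mathcal{N}_0$, so it suffices to treat the case $\mathcal{N}_0=0$. Moreover $\dim\mathcal{W}=n=5$ forces Chern flatness by \eqref{eq:TR}, and Theorem~\ref{thm-imagecodim1} disposes of the codimension-one case $d=4$, so I only need $1\le d\le 3$. Throughout, I would work with an \emph{admissible} local unitary frame $e=\{e_1,\dots,e_5\}$ whose last $d$ vectors are $\nabla$-parallel and span $\mathcal{W}$; this exists because $\mathcal{W}\subseteq\ker R$.

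For $d=3$, the workhorse is Lemma~\ref{lemma:41}: if $Z,W$ are parallel holomorphic vectors then $\langle T(\cdot,W),\overline{Z}\rangle$ is a parallel $(1,0)$-form, so a nonzero such form yields a parallel direction in $\mathcal{N}$, hence eventually a parallel frame and Chern flatness. Assuming $M$ is not Chern flat, this annihilates all $T^j_{i1}$ and $T^j_{i2}$ with $i,j\ge 3$; combined with $\eta=0$ and the quadratic torsion identity \eqref{eq:TT}, it forces $T^4_{\ast\ast}=T^5_{\ast\ast}=0$, contradicting $e_4,e_5\in\mathcal{W}$. For $d=2$ I would exploit the parallel $1$-forms $\psi_X=\langle T(\cdot,X),\overline{X}\rangle$ with $X$ a constant combination of $e_4,e_5$. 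If all of them vanish, restricting $T^4,T^5$ to $\mathcal{N}\cong\mathbb{C}^3$ gives parallel skew forms with one-dimensional parallel kernels; matching the kernels forces either Chern flatness (distinct kernels give a parallel frame on $\mathcal{N}$) or $\mathcal{N}_0\ne 0$ (coinciding kernels). If some $\psi_X\ne 0$, it provides a parallel $e_3\in\mathcal{N}$, the only surviving torsion is $T^\alpha_{3\beta},T^\alpha_{12}$ with $\alpha,\beta\ge 4$, and the curvature lives on a $2$-dimensional block, so I would replay the dimension-three argument: $\nabla T=0$ gives $\operatorname{tr}\Theta=0$ (Ricci flatness), and feeding $\nabla R=0$ into the structure equations then forces $R=0$.

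For $d=1$, $T^5$ restricted to $\mathcal{N}\cong\mathbb{C}^4$ is a parallel skew form, so after a unitary change $(T^5_{ij})=\operatorname{diag}(aE,bE)$ with $a\ge b\ge 0$; one checks $b>0$ (else $\mathcal{N}_0\ne 0$), and $a,b$ are constants using $\|T\|^2$ and the norm of the parallel $(4,0)$-form $\Phi\wedge\Phi$. If $a>b$, the relations on $\theta$ coming from $\nabla T=0$ make $\Theta$ block-diagonal with two Ricci-flat $2\times 2$ blocks, and the dimension-three argument again yields $R=0$. The remaining case $a=b$ is precisely the one producing the putative non-trivial example: $T$ is then a parallel holomorphic symplectic form on $\mathcal{N}$, and differentiating $\nabla T=0$ in an admissible frame yields the symmetries $\theta_{11}+\theta_{22}=0$, $\theta_{33}+\theta_{44}=0$, $\theta_{14}-\theta_{32}=0$, $\theta_{13}+\theta_{42}=0$, which propagate through $\Theta=d\theta-\theta\wedge\theta$ to give the $\mathsf{USp}_4(\mathbb{C})$-shape \eqref{eq:fivefold}/\eqref{USP4} of the curvature. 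Assembling the three cases proves the theorem.

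The hard part — and the reason the statement is phrased ``(if exists)'' — is exactly the $d=1$, $a=b$ branch: unlike in dimensions $3$ and $4$, the combination of $\nabla T=0$, $\nabla R=0$ and the Bianchi identities no longer over-determines the Chern curvature, so one cannot force $R=0$, and the best available conclusion is the symplectic/unitary symmetry constraint. In the other branches I expect the main technical effort to be organizing the vanishing deductions so that the ``parallel direction in $\mathcal{N}\Rightarrow$ Chern flat'' mechanism is applied cleanly and the identity \eqref{eq:TT} together with $\eta=0$ closes each subcase.
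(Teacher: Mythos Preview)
Your proposal is correct and follows essentially the same route as the paper: the identical reduction to $1\le\dim\mathcal{W}\le 3$, the same parallel-form mechanism (Lemma~\ref{lemma:41} and the $\psi_X$) to produce parallel directions in $\mathcal{N}$ and force Chern flatness in Cases $d=3,2$, the same skew-form/kernel analysis and the Ricci-flat two-block replay of the dimension-three argument, and the same $a>b$ versus $a=b$ dichotomy in Case $d=1$ leading to the $\mathsf{USp}_4(\mathbb{C})$ shape. The only places where you compress more than the paper are that in Case $d=3$ the parallel form $\langle T(\cdot,W),\overline Z\rangle$ must be projected to $\mathcal{N}$ before reading off a parallel direction there, and in Case $d=2$ Subcase~(2) reaching ``only $T^{\alpha}_{3\beta},T^{\alpha}_{12}$ survive'' requires one more pass of the kernel argument (restricting $T^4,T^5$ to $\mathcal{N}$ and forcing their kernels to contain $e_3$); both are exactly the mechanisms you already invoke, so no new idea is missing.
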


\section{CAS manifolds in general dimensions}

In this section, we consider CAS manifolds in general dimensions. First of all, the statements of Theorems \ref{thm-dim3}, \ref{thm-dim4} and \ref{thm-dim5}  can be slightly generalized, to conclude that any CAS manifold whose image distribution $\mathcal{W}$ has codimension at most $3$ must be trivial:

\begin{theorem}\label{thm-imagecodim}
Let $(M^n,g)$ be a CAS manifold. If the image distribution $\mathcal {W}$ of the Chern torsion has codimension at most $\,3$, then $M$ is trivial, namely, each de Rham factor of $M$ is either K\"ahler (hence Hermitian symmetric) or Chern flat.
\end{theorem}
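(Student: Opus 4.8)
The plan is to reduce the general codimension-$\leq 3$ case to the arguments already carried out for $\dim\mathcal{W}=n-1$ (Theorem~\ref{thm-imagecodim1}), for dimension $3$ (Theorem~\ref{thm-dim3}), and for the three cases in the dimension $5$ analysis, by splitting off K\"ahler de Rham factors and then isolating an "irreducible core" on which the torsion has trivial kernel and $\mathcal{W}$ has small codimension. Concretely, first I would invoke Theorem~\ref{thm-N0} to peel off the kernel foliation $\mathcal{N}_0$ as a Hermitian symmetric factor, so that from now on we may assume $\mathcal{N}_0=0$. Let $m=\dim\mathcal{N}=\operatorname{codim}\mathcal{W}\in\{1,2,3\}$ (if $m=0$ the manifold is Chern flat by~(\ref{eq:TR})). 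The $m=1$ case is exactly Theorem~\ref{thm-imagecodim1}. So the work is in $m=2$ and $m=3$.

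Next I would set up admissible frames as in \S5: since $\mathcal{W}$ is parallel and lies in $\ker R$, choose a local unitary frame $e=\{e_1,\dots,e_n\}$ with $\{e_{m+1},\dots,e_n\}$ parallel and spanning $\mathcal{W}$, and $\{e_1,\dots,e_m\}$ spanning $\mathcal{N}$. The key structural device, used repeatedly in \S4--\S5, is Lemma~\ref{lemma:41}: for any pair of local \emph{parallel} holomorphic vectors $Z,W$, the $(1,0)$-form $\psi(\cdot)=\langle T(\cdot,W),\overline{Z}\rangle$ is parallel and holomorphic. Feeding $Z,W\in\mathcal{W}$ (hence parallel) into this, and noting that the $\mathcal{W}$-components of $\psi$ are themselves parallel one-forms coming from the parallel coframe on $\mathcal{W}$, one extracts that the $\mathcal{N}$-valued part $\sum_{i\le m}T^{j}_{iW}\varphi_i$ (with $j>m$ a $\mathcal{W}$-index) is parallel; if it is nonzero it gives a parallel direction in $\mathcal{N}$, which can be rotated into the frame and shown — via the difference of Chern and Levi-Civita connections~(\ref{eq:ts}) and the vanishing of the relevant torsion components — to force Chern flatness (as in the proof of Theorem~\ref{thm-imagecodim1}). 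Iterating over all $W\in\mathcal{W}$ and all $\mathcal{W}$-indices $j$, one concludes that in the non-Chern-flat case $T^{j}_{i\beta}=0$ for all $1\le i\le m$ and all $m<\beta,j\le n$ (this is the analogue of~(\ref{eq:Tij=0})), so that $T$ is determined by the skew-symmetric forms $T^{j}|_{\mathcal{N}\times\mathcal{N}}$ on $\mathcal{N}\cong\mathbb{C}^m$ together with $\eta=0$. For $m=2$ and $m=3$ these skew forms on a $2$- or $3$-dimensional space necessarily have nontrivial kernel (rank $\le 2$), and a parallel-kernel argument as in Lemmas~\ref{lemma:32-1} and~\ref{lemma:4} shows this kernel is parallel; if the kernels of all $T^{j}$ share a common line in $\mathcal{N}$, that vector lies in $\mathcal{N}_0$, contradicting $\mathcal{N}_0=0$; if they do not all coincide, $\mathcal{N}$ acquires two independent parallel directions hence a parallel frame, giving Chern flatness. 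The one remaining possibility — as in Case 2(2) of \S5 — is that the obstruction collapses onto a single $2$-dimensional subspace of $\mathcal{N}$ carrying a nondegenerate skew form, so that effectively $\dim\mathcal{W}=n-2$ with $\mathcal{N}$ spanned by a parallel $e_3,\dots$ part plus a $2$-plane $\{e_1,e_2\}$; then the only possibly nonzero curvature lives in the $\{e_1,e_2\}$ block, which by the $d(\text{constant torsion})=0$ computation is Ricci flat, and the verbatim argument of Theorem~\ref{thm-dim3} (choosing $e_1$ to realize the maximal holomorphic sectional curvature, diagonalizing via~(\ref{eq:nablaR}), and using $\nabla R=0$ together with the structure equation) forces $R=0$.

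The main obstacle is the bookkeeping in the $m=3$ case: unlike $n=5$ where $\dim\mathcal{W}=2$ already forces $\dim\mathcal{N}\le 3$ with the kernel analysis being essentially two-dimensional, here $\mathcal{W}$ can have arbitrarily large dimension and $\mathcal{N}\cong\mathbb{C}^3$, so one must carefully run the Lemma~\ref{lemma:41} extraction simultaneously over the whole (large) space $\mathcal{W}$ of parallel vectors, not just a basis, and verify that the resulting vanishing $T^{j}_{i\beta}=0$ holds uniformly. Once that reduction is in hand, a skew-symmetric form on $\mathbb{C}^3$ has rank $0$ or $2$, so its kernel is a line or all of $\mathbb{C}^3$, and the remaining combinatorics of "do the kernels of $\{T^{j}\}_{j>3}$ share a common line" splits cleanly into the three outcomes above (common line $\Rightarrow$ $\mathcal{N}_0\ne 0$, contradiction; distinct lines $\Rightarrow$ two parallel directions $\Rightarrow$ Chern flat; all of $\mathbb{C}^3$ for every $j$ $\Rightarrow$ $T^{j}_{\ast\ast}=0$ for all $\mathcal{W}$-indices, contradicting $e_j\in\mathcal{W}$). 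I expect no genuinely new curvature computation is needed beyond what appears in \S4--\S5; the content is organizing the case division so that every branch terminates in either Chern flatness, a K\"ahler factor split, or the Ricci-flat $2$-plane reduction already handled by the proof of Theorem~\ref{thm-dim3}.
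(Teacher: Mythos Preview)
Your overall strategy matches the paper's: peel off $\mathcal{N}_0$ via Theorem~\ref{thm-N0}, set up an admissible frame with $\mathcal{W}$ spanned by parallel vectors, extract parallel directions in $\mathcal{N}$ from the torsion data, and either complete a parallel frame (hence Chern flat) or reduce to the Ricci-flat $2$-plane argument of Theorem~\ref{thm-dim3}. The $m\le 2$ discussion is essentially correct (modulo the minor slip that a nonzero skew form on $\mathbb{C}^2$ has \emph{trivial} kernel, so your ``necessarily have nontrivial kernel'' is false there; but in that case you pass directly to the $2$-plane reduction anyway).

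For $m=3$, however, there is a genuine gap. Your key step claims that a single nonzero parallel form $\sum_{i\le m}T^{j}_{iW}\varphi_i$ on $\mathcal{N}$ ``forces Chern flatness (as in the proof of Theorem~\ref{thm-imagecodim1})''. In Theorem~\ref{thm-imagecodim1} the space $\mathcal{N}$ is one-dimensional, so a single parallel direction already completes a parallel frame. When $m=3$, one parallel direction in $\mathcal{N}\cong\mathbb{C}^3$ leaves a $2$-dimensional parallel complement on which the connection can still be nontrivial; Chern flatness only follows once you have \emph{two} independent parallel directions (the third then comes from a $\nabla T=0$ together with $\mathcal{N}_0=0$ argument). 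Consequently your asserted conclusion ``$T^{j}_{i\beta}=0$ for all $1\le i\le m$'' is too strong for $m=3$: in the non-Chern-flat case with exactly one parallel direction, say $e_3$, the components $T^{\alpha}_{3\beta}$ can survive. Your subsequent trichotomy then inherits the error, since ``if the kernels of all $T^{j}$ share a common line, that vector lies in $\mathcal{N}_0$'' uses the very vanishing $T^{j}_{v\beta}=0$ that you have not established.

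This is exactly the subtlety the paper addresses in its $p=3$ case: it tracks \emph{two} families of parallel vectors in $\mathcal{N}$ simultaneously --- the $Z_{\alpha\beta}$ dual to $\psi_{\alpha\beta}|_{\mathcal{N}}=\langle T(\cdot,e_\beta),\overline{e}_\alpha\rangle$, and the one-dimensional kernels $Y_\alpha$ of the skew forms $T^{\alpha}|_{\mathcal{N}\times\mathcal{N}}$ --- and argues that either two independent ones appear (Chern flat) or they all lie along a single line $e_3$. In the latter case one obtains only $T^{\alpha}_{1\beta}=T^{\alpha}_{2\beta}=0$ and $T^{\alpha}_{13}=T^{\alpha}_{23}=0$, so the surviving torsion is $T^{\alpha}_{12}$ and $T^{\alpha}_{3\beta}$, and one lands in Subcase~(2) of Case~2 of \S5. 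Your final ``one remaining possibility'' paragraph correctly identifies this endpoint, but it is not reached by the case division you actually wrote; you need to reorganize so that the single-parallel-direction branch feeds into it rather than being precluded by the over-strong vanishing claim.
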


\begin{proof}
Denote by $p$ the codimension of $\mathcal {W}$. By Theorem \ref{thm-imagecodim1} we already see that if $p=1$, then the universal cover of $M$ is holomorphically isometric to the product of  a $1$-dimensional K\"ahler factor (a complex space form) with Chern flat factor (a complex Lie group equipped with a left-invariant metric). Let us assume that $(M^n,g)$ is without K\"ahler de Rham factor. By Theorem \ref{thm-N0}, this means that the kernel $\mathcal{N}_0$ of the Chern torsion is trivial. Since $\mathcal{W}$ is contained in the kernel of the Chern curvature tensor $R$, around any given point in $M$, we may take a local unitary frame $e$ so that $\{ e_1, \ldots , e_p\}$ spans $\mathcal{N}$, $\{ e_{p+1}, \ldots , e_n\}$ spans $\mathcal{W}$, and $\nabla e_{\alpha}=0$ for each $p<\alpha \leq n$.

First consider the $p=2$ case. In this case if there is a local vector field in $\mathcal{N}$ that is parallel under $\nabla$, then by the parallelness of $\mathcal{N}$ there will be a local parallel tangent frame, thus $M$ is Chern flat. So under the assumption that $M$ is not Chern flat, we must have $\psi_X=0$ for all parallel $X\in \mathcal {W}$, where $\psi_X$ is the $(1,0)$-form on $\mathcal{N}$ defined by $\langle T(\cdot , X), \overline{X} \rangle$. This means $T^{\alpha}_{1\beta}=T^{\alpha}_{2\beta}=0$ for any $3\leq \alpha, \beta \leq n$. So by $\mathcal{N}_0 =0$, we know that there exists some $3\leq \alpha \leq n$ such that $T^{\alpha}_{12}\neq 0$. Fix such an $\alpha$. Note that the value $|T^{\alpha}_{12}|$ is a constant. By rotating $e_1$ if necessary, we may assume that $T^{\alpha}_{12}=|T^{\alpha}_{12}|$. Thus by applying $\nabla T=0$ to $T^{\alpha}_{12}$, we get $T^{\alpha}_{12}(\theta_{11}+\theta_{22})=dT^{\alpha}_{12} =0$, hence $\theta_{11}+\theta_{22}=0$ under this choice of local frame $e$, and $M$ is Chern Ricci flat. Following the same analysis on the Chern curvature tensor $R$ as in the proof of Theorem \ref{thm-dim3}, we conclude that $R$ must be zero.

Next let us consider the $p=3$ case. In this case $\mathcal{N}\cong {\mathbb C}^3$. For each $4\leq \alpha \leq n$,
$T^{\alpha}_{\ast ,\ast}$ gives us a parallel, skew-symmetric form on $\mathcal{N}$. Either it is zero or it has a one-dimensional kernel, in the direction $Y_{\alpha}$. That is, for each $4\leq \alpha \leq n$, either $\langle T(\mathcal{N}, \mathcal{N}), \overline{e}_{\alpha} \rangle =0$, or we have a parallel direction $Y_{\alpha}\in \mathcal{N}$. Similarly, for any given $4\leq \alpha, \beta \leq n$, consider the $1$-form $\psi_{\alpha\beta}$ on $\mathcal{N}$ defined by $\langle T(\cdot , e_{\beta}), \overline{e}_{\alpha}\rangle $. It is parallel since $T$ and each $e_{\alpha}$ are so. Either it is identically zero, which means $T^{\alpha}_{i\beta}=0$ for each $1\leq i\leq 3$, or it corresponds to a parallel direction $Z_{\alpha\beta}$ in $\mathcal{N}$. Clearly, $Z_{\alpha\beta}=\sum_{i=1}^3 \overline{T^{\alpha}_{i\beta}}e_i$. Note that the case
$$ T^{\alpha}_{ij}=0, \ \ \ T^{\alpha}_{i\beta}=0, \ \ \ \ \ \forall \ 1\leq i,j\leq 3, \ \forall \  4\leq \alpha, \beta \leq n $$
cannot occur, since we assumed $\mathcal{N}_0 =0$. Therefore there will be some parallel directions $Y_{\alpha}$ or $Z_{\alpha\beta}$ in $\mathcal{N}$. If we have more than one local parallel directions in $\mathcal{N}$, then we would have a parallel frame thus $M$ is Chern flat. So we may assume that there is one parallel direction, which we take as our $e_3$, and all $Y_{\alpha}$ and $Z_{\alpha\beta}$, if exists, are proportional to $e_3$. This means that we have $\nabla e_3=0$ and
$$ T^{\alpha}_{13}=T^{\alpha}_{23}=0, \ \ \ T^{\alpha}_{1\beta}=T^{\alpha}_{2\beta} =0,  \ \ \ \ \forall \ 4\leq \alpha , \beta \leq n. $$
So the only possibly non-zero components of $T$ are $T^{\alpha}_{12}$ and $T^{\alpha}_{3\beta}$. Now if we follow the same argument as in {\it Subcase (2)} of {\bf Case 2} of the proof of Theorem \ref{thm-dim5}, we conclude in exactly the same way that $M$ must be Chern flat. This completes the proof of Theorem \ref{thm-imagecodim}.
\end{proof}

Next we show that any CAS manifold without K\"ahler de Rham factor must be Chern Ricci flat. A key technical point used repeatedly in the proof is the following observation:

\begin{lemma}\label{lemma:61} Let $(M^n,g)$ be a Hermitian manifold with a CAS structure. Assume that $Z$ is a parallel holomorphic vector field on $M$. Define its dual $(1, 0)$-form as  $\xi^Z(\cdot) \doteqdot \langle \cdot, \overline{Z}\rangle$ and the associated torsion $(2, 0)$-form $\tau^{Z}(\cdot, \cdot)\doteqdot \langle T(\cdot, \cdot), \overline{Z}\rangle$.
Then $\xi^Z$ is a holomorphic $1$-form, with $d \xi^Z=\tau^Z$, and both $\xi^Z$ and $\tau^Z$ are  parallel with respect to the Chern connection.
\end{lemma}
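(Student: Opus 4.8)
The plan is to derive all three assertions directly from three facts about the Chern connection $\nabla$: it is metric, so $\nabla g=0$ and hence $\nabla\overline{Z}=\overline{\nabla Z}=0$ (the Chern connection commutes with conjugation); the field $Z$ is parallel, so $\nabla Z=0$; and, under the CAS hypothesis, $\nabla T=0$ by Lemma~\ref{lemma:1}.

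First I would check that $\xi^Z$ and $\tau^Z$ are parallel. For complex tangent vectors $U,X$ with $X$ of type $(1,0)$, the product rule together with $\nabla g=0$ gives
\[
(\nabla_U\xi^Z)(X)=U\langle X,\overline{Z}\rangle-\langle\nabla_U X,\overline{Z}\rangle=\langle X,\overline{\nabla_U Z}\rangle=0,
\]
so $\nabla\xi^Z=0$. The same computation applied to $\tau^Z$, now using in addition $\nabla T=0$, gives $(\nabla_U\tau^Z)(X,Y)=\langle(\nabla_U T)(X,Y),\overline{Z}\rangle=0$, so $\nabla\tau^Z=0$.

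Next I would compute $d\xi^Z$. Applying the identity \eqref{eq:general1} to the scalar $1$-form $\xi^Z$ with the connection induced by $\nabla$ (equivalently, combining $d\xi^Z(X,Y)=X\xi^Z(Y)-Y\xi^Z(X)-\xi^Z([X,Y])$ with $[X,Y]=\nabla_X Y-\nabla_Y X-T(X,Y)$) and using that $\xi^Z$ is parallel, one obtains
\[
d\xi^Z(X,Y)=(\nabla_X\xi^Z)(Y)-(\nabla_Y\xi^Z)(X)+\xi^Z(T(X,Y))=\xi^Z(T(X,Y)).
\]
I would then evaluate this on the three types of pairs: if $X,Y$ are both of type $(1,0)$ it equals $\langle T(X,Y),\overline{Z}\rangle=\tau^Z(X,Y)$; if $X$ is of type $(1,0)$ and $Y$ of type $(0,1)$ it vanishes because $T(X,\overline{Y})=0$; and if $X,Y$ are both of type $(0,1)$ it vanishes because $T(\overline{X},\overline{Y})=\overline{T(X,Y)}$ is of type $(0,1)$, on which the $(1,0)$-form $\xi^Z$ vanishes. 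Hence $d\xi^Z$ is purely of type $(2,0)$ and equals $\tau^Z$; in particular its $(1,1)$-component, which is $\overline{\partial}\xi^Z$, is zero, so $\xi^Z$ is a holomorphic $1$-form (and, as a by-product, $d\tau^Z=d\,d\xi^Z=0$ makes $\tau^Z$ holomorphic as well, though this is not asserted).

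I do not expect a genuine obstacle here; the delicate points are merely that \eqref{eq:general1}, stated for bundle-valued forms, specializes correctly to the scalar $1$-form $\xi^Z$ using the connection that $\nabla$ induces on forms, and that the type bookkeeping is carried out carefully so that the $(1,1)$- and $(0,2)$-parts of $d\xi^Z$ are genuinely killed. I would also make explicit at the outset that $\nabla\overline{Z}=0$ follows from $\nabla Z=0$.
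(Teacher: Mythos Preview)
Your argument is correct and essentially the same as the paper's. The only differences are organizational: you establish $\nabla\xi^Z=0$ first and then invoke it in the computation of $d\xi^Z$, whereas the paper computes $d\xi^Z(W,\overline{U})$ and $d\xi^Z(W,U)$ directly and only afterwards records that $\xi^Z$ is parallel; and you treat the $(0,1)$--$(0,1)$ case explicitly, which is in fact automatic on an integrable complex manifold since $d$ of a $(1,0)$-form has no $(0,2)$ component. One small remark: $\nabla T=0$ is part of the CAS hypothesis itself rather than a consequence of Lemma~\ref{lemma:1}, so you can cite it directly.
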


\begin{proof} Let $W,U$ be local type $(1,0)$ vector fields. Since $(T(W,\bar{U})=0$, by the general formula (\ref{eq:general1}) we have
\begin{eqnarray*}
\bar{\partial} \xi^Z(W, \overline{U})\ = \ d\xi^Z(W, \overline{U})&=&(\nabla_W\xi^Z)(\overline{U})-(\nabla_{\overline{U}}\xi^Z)(W) \ = \ -(\nabla_{\overline{U}}\xi^Z)(W)  \\
&=&-\overline{U}\langle W, \overline{Z}\rangle +  \langle \nabla_{\overline{U}}W, \overline{Z}\rangle  \ = \ -\langle W, \nabla_{\overline{U}}\overline{Z}\rangle =0.
\end{eqnarray*}
In the last line we used the fact that $\overline{Z}$ is parallel. Replacing $\overline{U}$ by $U$ we have
\begin{eqnarray*}
\partial \xi^Z(W, U)\ = \ d\xi^Z(W, U)&=&(\nabla_W\xi^Z)(U)-(\nabla_{U}\xi^Z)(W)+\langle T(W, U), \overline{Z}\rangle\\
&=& \langle T(W, U), \overline{Z}\rangle \ = \ \tau^Z(W,U).
\end{eqnarray*}
So $\xi^Z$ is a holomorphic $1$-form, $d\xi^Z=\tau^Z$, and $\tau^Z$ is a $d$-exact holomorphic $2$-form. To see that $\xi^Z$ is parallel, for any (local) complex vector field $X,U$ with $U$ being of type $(1,0)$, we have
$$
(\nabla_X \xi^{Z})(U)=X\langle U, \overline{Z}\rangle -\langle \nabla_X U, \overline{Z}\rangle = \langle U, \nabla_X\bar{Z} \rangle = 0.
$$
So $\nabla \xi^Z=0$. Similarly, $\nabla \tau^Z =0$.
\end{proof}

\begin{proof}[{\bf Proof of Theorem \ref{thm-Ricciflat}.}]

Let $(M^n,g)$ be a Hermitian manifold with a CAS structure and without any K\"ahler de Rham factor. This means that $\mathcal{N}_0=0$ by Theorem \ref{thm-N0}. We have orthogonal decomposition $T^{1,0}M = \mathcal{N} \oplus \mathcal{W}$ into $\nabla$-parallel distributions where $\mathcal{W}$ is  the image of $T$. We already know that $\mathcal{W}$ is integrable and is contained in the kernel of the Chern curvature tensor $R$, hence is a holomorphic foliation with Chern flat leaves. Let $e$ be a local unitary frame so that $\{e_i\}_{1\leq i\leq r}$ spans $\mathcal{N}$ and $\{e_{\alpha}\}_{r< \alpha \leq n}$ spans $\mathcal{W}$, with $\nabla e_{\alpha} = 0$ for each $r+1\leq \alpha \leq n$. Lifting to the universal cover $\widetilde{M}$ if necessary, we may assume that the parallel frame $\{ e_{\alpha}\}_{r< \alpha \leq n}$ of $\mathcal{W}$ is globally defined (since by the holonomy theorem of Ambrose-Singer, the action of any holonomy restricted to $\mathcal{W}$ is trivial). Let $\varphi$ be the unitary coframe dual to $e$. Then the Chern connection matrix $\theta$ under $e$ satisfies $\theta_{\alpha \ast }=0$, so we have
$$ d\varphi_i = -\sum_{j=1}^r \theta_{ji}\wedge \varphi_j, \ \ \ d\varphi_{\alpha} = \tau^{\alpha} = \sum_{a,b=1}^n T^{\alpha}_{ab}\varphi_a\wedge \varphi_b. $$
Note that while those $\varphi_i$ are only defined locally, each $\varphi_{\alpha}$ is a globally defined parallel holomorphic $(1, 0)$-form on $\widetilde{M}$ by Lemma \ref{lemma:61}, and each $\tau^{\alpha}$ is also a globally defined holomorphic, parallel $(2,0)$-form and is $d$-exact. Write $\tilde{\tau}^{\alpha}=\sum_{i,j=1}^r T^{\alpha}_{ik}\varphi_i\wedge \varphi_j$ for the part of $\tau^{\alpha}$ modulo $\mbox{span}\{ \varphi_{\beta}\mid \, r+1\leq \beta \leq n\}$.

For each $\alpha$, consider the skew-symmetric bilinear form $A^{\alpha}:= T^{\alpha}|_{\mathcal{N}\times \mathcal{N}}= \langle T(\cdot , \cdot), \bar{e}_{\alpha}\rangle $ on $\mathcal{N}$. Clearly it is parallel with respect to $\nabla$. Under the local unitary  frame $\{ e_i\} $ of $\mathcal{N}$, $A^{\alpha}$ is represented by a skew-symmetric $r\times r$ matrix which we will still denote by the same letter. Assume $2k$ is the maximum rank of $A^X$ for all $X\in \mathcal{W}$, and with a constant unitary change of $\{ e_{\alpha}\}$ if necessary, we may assume that $ \mbox{rank}(A^n) = 2k$. As is well-known, for any given skew-symmetric $r\times r$ matrix $A^n$, there exists a unitary matrix $U$ such that
$$ ^t\!U A^n U = \left[ \begin{array}{llll} a_1E & & & \\ & \ddots & & \\  & & a_kE &  \\  & & &  0_{r-2k}  \end{array} \right] ,   \ \ \ \ \ E = \left[ \begin{array}{ll } 0 & 1 \\ -1 & 0 \end{array} \right], \ \ \ \ a_1\geq \cdots \geq a_k >0.$$
Since $-a_i^2$ are eigenvalues of the parallel Hermitian form $A^n\overline{A^n}$, so all $a_i$ are constants. By a unitary change of $\{ e_i\}$ if needed, we may assume that $A^n$ is in the above block-diagonal form. Fix this frame $e_i$ now, and let $\mathcal{N}_1$, $\mathcal{N}_2$ be the space spanned by $\{ e_1, \ldots , e_{2k}\}$ and $\{ e_{i'} \mid 2k+1\leq i'\leq r\}$, respectively. We have the orthogonal decomposition  $\mathcal{N}= \mathcal{N}_1\oplus \mathcal{N}_2$, and each factor is $\nabla$-parallel since $\mathcal{N}_2$ is the kernel space of the parallel form $A^n$. For each $r+1\leq \alpha \leq n-1$, write
$$ A^{\alpha} = \left[ \begin{array}{cc } \ast & B \\ -\,^t\!B & C \end{array} \right] $$
where $C$ is a $(r-2k)\times (r-2k)$ matrix. Since $tA^n+ A^{\alpha} = A^{te_n+e_{\alpha}}$ has rank at most $2k$ for any $t\in {\mathbb C}$, we know that $B=C=0$ in the above. So for each $i'$ in the $\mathcal{N}_2$ block, the only possibly non-zero torsion components involving $i'$ are $T^{\alpha}_{i'\beta}$, where $\alpha , \beta \in \mathcal {W}$. For any fixed $\alpha$, $\beta$, consider the type $(1,0)$ vector field
$$ X^{\alpha}_{\beta} = \sum_{i'=2k+1}^r \overline{T^{\alpha}_{i'\beta}}\,e_{i'}.$$
It is a globally defined vector field in the distribution $\mathcal{N}_2$, and is parallel as it is dual to the $(1,0)$-form $\psi$ on $\mathcal{N}_2$:  $\psi(\cdot )= \langle T(\cdot , e_{\beta}), \overline{e}_{\alpha} \rangle $, in the sense that $\psi (Y)= \langle Y, \overline{X^{\alpha}_{\beta}}\rangle$ for any $Y$. Since $\psi$ is clearly parallel, so is $X^{\alpha}_{\beta}$. Clearly, these $\{X^{\alpha}_{\beta}\}$ for all $r\le \alpha$, $\beta\le n$ span the entire $\mathcal{N}_2$, as otherwise we would have a direction $i'$ in $\mathcal{N}_2$ such that $T^{\ast}_{i'\ast}=0$, hence $e_{i'}\in \mathcal{N}_0$, contradicting with our assumption that $M^n$ is without K\"ahler de Rham factor. This shows that the  distribution $\mathcal{N}_2$ admits a parallel frame, thus belonging to the kernel of the curvature tensor $R$.

Now let us further assume that each $e_{i'}$, $2k+1\leq i' \leq r$ is also parallel. Thus we have $\theta_{i'\ast}=0$ and hence $d\varphi_{i'}=0$, namely, each $\varphi_{i'}$ is a $d$-closed holomorphic $1$-form. We obtain locally
$$ \tilde{\tau}^n = \sum_{i,j=1}^r T^n_{ij} \varphi_i \wedge \varphi_j = 2a_1\varphi_1\wedge\varphi_2 + 2a_2\varphi_3\wedge \varphi_4 + \cdots + 2a_k \varphi_{2k-1}\wedge \varphi_{2k}.$$
Now consider the globally defined holomorphic $(n, 0)$-form
$(\tau^n)^k \wedge \bigwedge_{i'=2k+1}^r \varphi_{i'} \wedge \bigwedge_{\alpha=r+1}^n \varphi_{\alpha}$.
We have that on $\widetilde{M}$
$$ \Psi\doteqdot (\tau^n)^k \wedge \bigwedge_{i'=2k+1}^r \varphi_{i'} \wedge \bigwedge_{\alpha=r+1}^n \varphi_{\alpha} = (\tilde{\tau}^n)^k \wedge \varphi_{2k+1} \wedge \cdots \wedge \varphi_n = (2^kk! \,a_1\cdots a_k)\, \varphi_1 \wedge \cdots \wedge \varphi_n. $$
It is easy to see that $\Psi \overline{\Psi}$ equals to a non-zero constant multiple of the volume form. This implies that the Chern Ricci curvature of $M$ must vanish, and we have completed the proof of Theorem \ref{thm-Ricciflat}. It is also clear from the construction that $\Psi$ is parallel. As in the proof of Theorem \ref{thm-imagecodim}, if $k=1$, then $(M, g)$ is Chern flat.
\end{proof}

We remark  that, if the universal cover $\widetilde{M}$ is compact, then the above  $\{\varphi_{i'}\}$ terms cannot exist, as they are $d$-closed holomorphic $1$-forms which must be $d$-exact by topological consideration, hence $\varphi_{i'}=df$ for some global holomorphic function $f$ on $\widetilde{M}$ which must be constant. Therefore, if  $n$ is odd then $\dim(\mathcal{W})$ must be odd, while if  $n$ is even then  $\dim(\mathcal{W})$ must be even. Similarly since $\bar{\partial}\varphi_\alpha =0$, we have that $\dim(H_{\bar{\partial}}^{1, 0}(\widetilde{M}))\ge 1$, if the CAS manifold $M$ is not locally Hermitian symmetric.

\section{Homogenous manifolds with a CAS structure}

In this section, we will give some general discussion of manifolds with a CAS structure in relation to its local homogeneity. By the virtue of Theorem 7.4 of \cite{KN}, for a manifold with the CAS structure, given any two points $x, y\in M$ and any curve $\gamma$ joining them, one can find a neighborhood $U$ of $x$ and a local isometric holomorphic map $f: U\to M$ such that $f(x)=y$ such that $df|_{x}$ coincides with the parallel transport along $\gamma$.

First let us give a proof of Corollary \ref{cor-homog}. For this we will need a Bochner formula which should be well-known to experts. Let us recall the notion of first and second Ricci curvature. Suppose that $(M^n,g)$ is a Hermitian manifold, with $\nabla$ its Chern connection and $R$ the Chern curvature tensor. The {\em first (second) Chern Ricci} curvature tensor are defined respectively by
$$ Ric^{(1)}_{i\bar{j}}=\sum_{k=1}^n  R_{i\bar{j}k\bar{k}} , \ \ \ \ Ric^{(2)}_{i\bar{j}}=\sum_{k=1}^n R_{k\bar{k}i\bar{j}}$$
under any unitary frame. Suppose $Z$ is a holomorphic vector field on $M^n$, then we have the following well-known Bochner formula
\begin{equation}
\Delta |Z|^2 = |\nabla Z|^2 - Ric^{(2)}_{Z\bar{Z}}.  \label{eq:Bochner}
\end{equation}
Here the left hand side is the complex Laplacian, namely, the trace of $\sqrt{-1}\partial \overline{\partial} |Z|^2$ with respect to the K\"ahler form $\omega$ of $g$. To verify the formula, we just need to recall the following commutation formula
$$ Z_{i,k\bar{\ell}} - Z_{i ,\bar{\ell}k} = - \sum_{r=1}^n Z_r R_{k\bar{\ell}i\bar{r}} $$
under any unitary frame $e$, where $Z=\sum_i Z_ie_i$ and the indices after comma stand for covariant derivatives with respect to $\nabla$. In particular, $\sum_k Z_{i,k\bar{k}} = - \sum_r Z_r Ric^{(2)}_{i\bar{r}}$. Fix any $p\in M$, we can always choose a local unitary frame $e$ so that the connection matrix of $\nabla$ vanishes at $p$. Since $Z$ is holomorphic, we always have $Z_{i,\bar{j}}=0$, thus
\begin{eqnarray*}
 \Delta |Z|^2 & = &  g^{i\bar{k}}\frac{\partial^2}{\partial z^i \partial \bar{z}^k}  |Z|^2 \ = \  \sum_k (|Z|^2)_{,k\bar{k}} \ = \ \sum_{i,k} (Z_{i,k}\overline{Z}_i)_{,\bar{k}}  \\ & = &  \sum_{i,k} |Z_{i,k}|^2 + \sum_{i,k} Z_{i,k\bar{k}} \overline{Z}_i \ = \  |\nabla Z|^2 - Ric^{(2)}_{Z\bar{Z}}. \\
\end{eqnarray*}
So we have verified the Bochner formula (\ref{eq:Bochner}). As an immediate consequence, we have

\begin{lemma}
If $(M^n,g)$ is a compact Hermitian manifold whose second Chern Ricci is non-positive, then any holomorphic vector field $Z$ on $M$ must be parallel with respect to the Chern connection $\nabla$.
\end{lemma}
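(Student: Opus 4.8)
The plan is to read the conclusion straight off the Bochner formula (\ref{eq:Bochner}) together with the maximum principle. Since $Z$ is holomorphic, (\ref{eq:Bochner}) gives $\Delta |Z|^2 = |\nabla Z|^2 - Ric^{(2)}_{Z\bar Z}$, where $\Delta$ is the complex Laplacian, i.e.\ the trace with respect to $\omega$ of $\sqrt{-1}\,\partial\overline{\partial}$. By hypothesis $Ric^{(2)}\le 0$, so $Ric^{(2)}_{Z\bar Z}\le 0$ and hence $\Delta |Z|^2 \ge 0$ on all of $M$.

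Next I would apply the maximum principle to the nonnegative function $u=|Z|^2$. In local holomorphic coordinates $\Delta = g^{i\bar k}\partial_i\partial_{\bar k}$ is a second-order elliptic operator with smooth coefficients and no zeroth-order term --- this holds for any Hermitian metric, not just K\"ahler ones --- so on the compact connected manifold $M$ a function with $\Delta u\ge 0$ must be constant: at a maximum point $p$ the complex Hessian is negative semidefinite, forcing $\Delta u(p)\le 0$ and hence $\Delta u(p)=0$, after which Hopf's strong maximum principle plus connectedness give $u\equiv u(p)$. Therefore $|Z|^2$ is constant, $\Delta|Z|^2=0$, and (\ref{eq:Bochner}) becomes $|\nabla Z|^2 = Ric^{(2)}_{Z\bar Z}$. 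The left-hand side is $\ge 0$ and the right-hand side is $\le 0$, so both vanish; in particular $\nabla Z=0$, i.e.\ $Z$ is $\nabla$-parallel.

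The one point worth flagging is why we route through the maximum principle rather than through integration: on a non-K\"ahler compact Hermitian manifold $\int_M \Delta u\, dV_g$ need not vanish, so an integral identity would require an extra assumption on the metric (Gauduchon or balanced). The argument above avoids this entirely, since the hypothesis only bounds $Ric^{(2)}$ from above and the only analytic input needed is the ellipticity of $\Delta$. Thus there is no real obstacle here; the lemma is essentially an immediate corollary of (\ref{eq:Bochner}).
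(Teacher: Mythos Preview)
Your argument is correct and is essentially what the paper has in mind: the paper states the lemma as ``an immediate consequence'' of the Bochner formula (\ref{eq:Bochner}) without further detail, and you have supplied the standard completion via the strong maximum principle for the complex Laplacian. Your remark that integration would require an additional hypothesis (Gauduchon or balanced) on a general Hermitian manifold is a worthwhile observation and explains why the maximum-principle route is the right one here.
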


By a classic result of Bochner and Montgomery \cite{BM}, for any given compact complex manifold $M^n$, its automorphism group  $\mbox{Aut}(M)$ (namely, the group of biholomorphisms from $M$ onto itself) is a complex Lie group, and its Lie algebra is the Lie algebra of holomorphic vector fields on $M$. The latter is equal to $H^0(M,T^{1,0}M)$, the space of global holomorphic sections of the holomorphic tangent bundle of $M$.

A compact complex manifold $M$ is said to be {\em homogeneous} if $\mbox{Aut}(M)$ acts transitively on $M$. It is said to be {\em almost homogeneous} if $\mbox{Aut}(M)$ has an open orbit in $M$. In particular, at every point $p$ in a homogeneous manifold (or at a generic point $p$ in an almost homogeneous manifold), the global holomorphic sections span the tangent space $T^{1,0}_pM$. That is, there exists $Z_1, \ldots , Z_n \in H^0(M,T^{1,0}M)$ such that $\{ Z_1(p), \ldots , Z_n(p)\}$ is linearly independent. The aforementioned Bochner formula leads to the following

\begin{proposition}\label{prop:71}
Let $M^n$ be a compact homogeneous (or almost homogeneous) complex manifold. If $g$ is any Hermitian metric on $M^n$ with non-positive second Chern Ricci, then $g$ is Chern flat, thus $M$ is the quotient of a complex Lie group by a discrete subgroup.
\end{proposition}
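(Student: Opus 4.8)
The plan is to combine the Bochner formula \eqref{eq:Bochner} established above with the representation-theoretic description of $\mathrm{Aut}(M)$ coming from Bochner--Montgomery, and then deduce Chern flatness by exhibiting a global parallel frame. First, I would note that since $g$ has non-positive second Chern Ricci and $M$ is compact, the preceding Lemma applies: every holomorphic vector field $Z$ on $M$ is parallel with respect to the Chern connection $\nabla$, and moreover $\mathrm{Ric}^{(2)}_{Z\bar Z}\equiv 0$ along any such $Z$ (integrate \eqref{eq:Bochner} over $M$: the left side integrates to zero, forcing both $|\nabla Z|^2=0$ and $\mathrm{Ric}^{(2)}_{Z\bar Z}=0$ pointwise). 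Second, by Bochner--Montgomery the Lie algebra of $\mathrm{Aut}(M)$ is $H^0(M,T^{1,0}M)$, and the (almost) homogeneity hypothesis means that at every point $p$ (resp.\ at a generic point $p$) there exist $Z_1,\dots,Z_n\in H^0(M,T^{1,0}M)$ with $Z_1(p),\dots,Z_n(p)$ linearly independent.

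The key step is then to assemble these into a parallel frame. Since each $Z_a$ is $\nabla$-parallel and parallel transport is an isomorphism on fibers, linear independence of $\{Z_a(p)\}$ at one point propagates: $\{Z_1(q),\dots,Z_n(q)\}$ is linearly independent for every $q$ that can be joined to $p$ by a path along which parallel transport sends the $Z_a(p)$ to the $Z_a(q)$ --- which is automatic because the $Z_a$ are globally defined parallel sections, so $Z_a(q)$ is precisely the parallel transport of $Z_a(p)$. Hence $\{Z_1,\dots,Z_n\}$ is a global parallel frame of $T^{1,0}M$ on all of $M$ (in the almost homogeneous case one gets it on the open dense orbit first, but a global parallel frame on a dense open set extends by continuity of parallel transport to all of $M$, using that $M$ is connected). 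With a global parallel frame in hand, the Chern connection matrix vanishes identically in that frame, so the Chern curvature $R=0$; that is, $g$ is Chern flat.

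Finally, Chern flatness plus compactness gives the structural conclusion by Boothby's theorem \cite{Boothby}, quoted in the introduction: a compact Hermitian manifold with flat Chern connection is a quotient of a complex Lie group equipped with a left-invariant Hermitian metric by a discrete subgroup. This completes the proof.

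\medskip

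The step I expect to be the main obstacle is the almost homogeneous case: one must be careful that the parallel frame built on the open orbit genuinely extends across the complement. The clean way around this is to observe that the $Z_a$ are fixed global holomorphic (hence, by the Lemma, parallel) sections of $T^{1,0}M$ and that the function $q\mapsto Z_1(q)\wedge\cdots\wedge Z_n(q)$ is a parallel section of $\bigwedge^n T^{1,0}M$; a parallel section of a line bundle is either identically zero or nowhere zero, and it is nonzero at the generic point $p$, hence nowhere zero. This sidesteps any delicate limiting argument and makes the extension immediate. The remaining ingredients --- the Bochner identity \eqref{eq:Bochner}, Bochner--Montgomery, and Boothby's theorem --- are all available as cited, so no further technical difficulty arises.
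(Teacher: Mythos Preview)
Your proposal is correct and follows essentially the same approach as the paper: use the Bochner formula to show every holomorphic vector field is Chern-parallel, then use (almost) homogeneity to obtain $n$ such fields linearly independent at a point, hence a global parallel frame, hence Chern flatness. The paper's proof is terser --- it simply asserts ``hence we have a parallel frame'' without spelling out the linear-independence propagation or the almost-homogeneous extension --- so your wedge-product argument for the latter is a genuine improvement in rigor over what is written. The only cosmetic difference is that the paper invokes H.-C.\ Wang's theorem \cite{Wang-PAMS} rather than Boothby \cite{Boothby} for the final structural conclusion; both are available and yield the same result here.
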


\begin{proof} By the assumption we may fix a point  $p\in M$ and pick $Z_1, \cdots, Z_n$, global holomorphic vector fields on $M^n$ such that they are linear independent at $p$. By the Bochner formula (\ref{eq:Bochner}), we know that each $Z_i$ is parallel with respect to the Chern connection $\nabla$ of $g$, hence we have a parallel frame, thus $g$ is Chern flat. Now the result follows from H.-C. Wang's theorem \cite{Wang-PAMS}.
\end{proof}

It was known that \cite{AK} for a Riemannian homogenous manifold, the only Ricci flat ones are totally flat (in fact isometric to $T^{n-k}\times \mathbb{R}^k$). {\it Does the same result holds for a homogenous complex manifold with the second Ricci curvature (with respect to the Chern connection) vanishing?} The above asserts that it is the case when $M$ is compact. The resolution of this problem is related to the possible example suggested by Section 5.

Now we  prove Corollary \ref{cor-homog} stated in the introduction.

\begin{proof}[{\bf Proof of Corollary \ref{cor-homog}.}]
Let $(M^n,g)$ be a CAS manifold. Assume also that $M$ is almost homogeneous. Each K\"ahler de Rham factor of $M$ is necessarily Hermitian symmetric. Let us denote by $N_2$ (or $N_3$) the product of all K\"ahler de Rham factors that are Hermitian symmetric of non-compact type (or compact type), then we have the decomposition $\widetilde{M}=N_1\times N_2\times N_3$  on the universal cover level where $N_1$ is the product of all non-K\"ahler de Rham factors. On $M^n$,  the holomorphic tangent bundle is the  direct sum $T^{1,0}M = T_1 \oplus T_2\oplus T_3$. Since a CAS metric is Chern K\"ahler-like, its first and second Chern Ricci curvature coincide. By Theorem \ref{thm-Ricciflat}, we know that $N_1$ is Chern Ricci flat. As is well-known, the $N_2$ factor has negative Ricci curvature, so $N_1\times N_2$ has non-positive second Chern Ricci curvature. So for any global holomorphic vector field $Z\in H^0(M, T_1\oplus T_2)$, $Ric^{(2)}(Z,\bar{Z}) \leq 0$, thus by the Bochner formula (\ref{eq:Bochner}) we know that $\nabla Z=0$. At a generic point $p\in M$, $T_1\oplus T_2$ is generated by global sections, hence has a parallel frame. This means that $N_1\times N_2$ is Chern flat. In particular,  any Hermitian symmetric space of non-compact type does not appear as de Rham factors of $M$. Since $N_3$ is compact and simply-connected, we have $\widetilde{M}=N_1\times N_3$ where $N_1$ is Chern flat. We have thus completed the proof of Corollary \ref{cor-homog}.
\end{proof}



Next we consider a Hermitian  manifold $M$ with a CAS structure. The group of holomorphic isometries of the universal cover $\widetilde{M}$ is always a Lie group, but it might no longer be a complex Lie group. So $\widetilde{M}$ admits an effective and transitive action of a Lie group $G$ which is isometric and holomorphic. We may further assume that $G$ is connected, and  $\widetilde{M}$ can be identified with the coset space $G/H$, where $H$ is the isotropic group fixing one point $o\in \widetilde{M}$. Note that $H$ is compact since it is a closed subgroup of the unitary group, and $M=\Gamma \backslash \widetilde{M}$ where $\Gamma \subseteq G$ is a discrete subgroup.


In the following, let $(M^n,g)$ be a Hermitian manifold. When $X$ is a vector field which preserves the Hermitian metric and the holomorphic structure, the operator $A_X=L_X-\nabla_X$ satisfies some special properties. In this section $X, Y, S$ are real vector fields.

\begin{proposition}\label{prop:62} Let $(M^n, g)$ be a Hermitian manifold. Assume that $X$ is a  real holomorphic vector field (namely, $L_XJ=0$, or equivalently that the $1$-parameter family of diffeomorphisms generated by $X$ preserve the holomorphic structure $J$) which is also Killing (that is, $L_X g=0$). Then
$\nabla_Y(A_X)=R_{X Y}$.
\end{proposition}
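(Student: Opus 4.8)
The plan is to obtain the formula from two ingredients: (i) that the local flow of $X$ preserves the Chern connection, i.e.\ $L_X\nabla=0$, and (ii) a general algebraic identity, valid for any connection, expressing $L_X\nabla$ through the curvature and the covariant derivative of $A_X$. I first note, as already observed in \S 2, that $A_X$ is a $(1,1)$-tensor: since $A_XY=L_XY-\nabla_XY=[X,Y]-\nabla_XY=-\nabla_YX-T(X,Y)$ is tensorial in $Y$, the symbol $\nabla_Y(A_X)$ means the covariant derivative of this endomorphism field, $(\nabla_YA_X)(Z)=\nabla_Y(A_XZ)-A_X(\nabla_YZ)$, and the assertion to prove is that this equals $R_{XY}Z$ for all real vector fields $Y,Z$.

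For ingredient (i): since $L_Xg=0$ and $L_XJ=0$, the local one-parameter group $\{\phi_t\}$ generated by $X$ consists of holomorphic isometries, so each $\phi_t$ preserves $g$, $J$ and hence the type of any tensor. Because the Chern connection is the unique connection with $\nabla g=0$, $\nabla J=0$ whose torsion has vanishing $(1,1)$-component, the pulled-back connection $\phi_t^{\ast}\nabla$ satisfies these same three defining conditions (its torsion being $\phi_t^{\ast}T$, still of pure type $(2,0)+(0,2)$), so $\phi_t^{\ast}\nabla=\nabla$; differentiating at $t=0$ gives $L_X\nabla=0$. For ingredient (ii): substituting the relation $[X,W]=\nabla_XW+A_XW$, which is merely the definition of $A_X$, into
\begin{equation*}
(L_X\nabla)(Y,Z)=[X,\nabla_YZ]-\nabla_{[X,Y]}Z-\nabla_Y[X,Z]
\end{equation*}
and using $\nabla_Y(A_XZ)=(\nabla_YA_X)(Z)+A_X(\nabla_YZ)$, the two occurrences of $A_X(\nabla_YZ)$ cancel and one is left with $\nabla_X\nabla_YZ-\nabla_Y\nabla_XZ-\nabla_{[X,Y]}Z-(\nabla_YA_X)(Z)=R_{XY}Z-(\nabla_YA_X)(Z)$. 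Combining (i) and (ii) yields $\nabla_Y(A_X)=R_{XY}$.

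The proof is thus short, and the only conceptual point is ingredient (i): real holomorphicity is exactly what upgrades ``$\phi_t$ is an isometry'' to ``$\phi_t$ is an affine transformation of $\nabla^c$'', so that a real holomorphic Killing field plays for the Chern connection the role a Killing field plays for the Levi-Civita connection; the statement is in this sense the Chern-connection analogue of Kostant's classical identity $\nabla(A_X)=R(X,\cdot\,)$ \cite{Kostant}. The one place requiring care is the sign bookkeeping in ingredient (ii), where the paper's conventions $R_{xy}z=\nabla_x\nabla_yz-\nabla_y\nabla_xz-\nabla_{[x,y]}z$ and $T(x,y)=\nabla_xy-\nabla_yx-[x,y]$ must be tracked consistently; no genuine obstacle appears.
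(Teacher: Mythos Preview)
Your proof is correct and follows essentially the same approach as the paper: both arguments rest on the observation that, since $X$ is real holomorphic and Killing, its flow preserves $g$ and $J$ and therefore the Chern connection (by uniqueness), and then both extract the identity $\nabla_Y(A_X)=R_{XY}$ from this fact by a short algebraic computation. The only cosmetic difference is that the paper phrases the invariance as the commutator relation $L_X\circ\nabla_Y-\nabla_Y\circ L_X=\nabla_{[X,Y]}$ and then substitutes $\nabla_X=L_X-A_X$ into the definition of $R_{XY}$, whereas you package the same computation as $(L_X\nabla)(Y,Z)=R_{XY}Z-(\nabla_YA_X)(Z)$ and set it to zero; these are two ways of writing the same calculation.
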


\begin{proof} Let $\varphi_t$ be the 1-parameter family of diffeomorphisms generated by $X$. It then preserves the Chern connection due to that it is the unique one which is compatible both with the metric and complex structure $J$. Hence
\begin{eqnarray*}
L_X \cdot \nabla_Y S&=&\lim_{t\to 0} \frac{1}{t}\left(\nabla_Y S -d\varphi_t(\nabla_Y S)\right)\\
&=& \lim_{t\to 0} \frac{1}{t}\left((\nabla_Y S-\nabla_{d\varphi_t( Y)}S)+(\nabla_{d\varphi_t (Y)}S-\nabla_{d\varphi_t (Y)}d\varphi_t(S))\right)\\
&=&\nabla_{[X, Y]} S+\nabla_Y \cdot  L_X S.
\end{eqnarray*}
Namely we have that
\begin{equation}\label{eq:K0stant1}
L_X \cdot \nabla_Y -\nabla_Y \cdot L_X=\nabla_{[X, Y]}.
\end{equation}
Now using (\ref{eq:K0stant1}) we calculate
\begin{eqnarray*}
R_{X, Y}&=&\nabla_X \cdot \nabla_Y -\nabla_Y\cdot \nabla_X -\nabla_{[X, Y]}\\
&=&(L_X-A_X)\cdot \nabla_Y -\nabla_Y (L_X-A_X)-\nabla_{[X, Y]}\\
&=&-A_X \cdot \nabla_Y +\nabla_Y\cdot  A_X.
\end{eqnarray*}
This proves the claimed identity.
\end{proof}

\begin{lemma}\label{lemma:kostant1}
(i) If $X$ is Killing, then $A_X$ is skew-symmetric;

(ii) If $X$ is real holomorphic, then $[A_X, J]=0$;

(iii) For any $X$, $Y$, it holds
\begin{equation}\label{eq:Konstant2}
A_X(Y)=-\nabla_Y X -T(X, Y).
\end{equation}
In particular, when $X$ is real holomorphic, $\nabla_{JY} X=J\nabla_Y X$.

(iv) If $X$ is real holomorphic and Killing, then $\iota_XT$ is a $d$-closed $1$-form valued in $T'M$;

(v) If the Hermitian $(M, g)$ has a  CAS structure, then $A_X$ satisfies the Jacobi equation
\begin{equation}\label{eq:kostant3}
\nabla^2_{S, Y}(A_X)\doteqdot \nabla_S \cdot \nabla_Y (A_X)-\nabla_{\nabla_S Y} A_X=-R_{A_X(S), Y}, \mbox{ namely } \nabla^2_{(\cdot), (\cdot)} [A_X(\cdot)] = -R_{[A_X(\cdot)], (\cdot)}(\cdot).
\end{equation}
\end{lemma}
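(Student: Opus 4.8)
The plan is to derive each part from the basic structural identities: the relation between $A_X$, $\nabla_\cdot X$ and $T$ from equation (\ref{eq:ts}) (or directly from the definition $A_XY = L_XY-\nabla_XY = \nabla_YX + [X,Y]-\nabla_XY+\ldots$, being careful with signs), the fact that the Chern connection is metric and almost-complex, and Proposition \ref{prop:62}. For (iii), I would start from $A_XY = L_XY - \nabla_XY = [X,Y]-\nabla_XY$ and rewrite $[X,Y] = \nabla_XY-\nabla_YX-T(X,Y)$ to get $A_XY = -\nabla_YX - T(X,Y)$ directly; this is the most elementary part and should come first since (i), (ii), (iv), (v) all lean on it. The ``in particular'' clause follows because $\nabla J=0$ gives $\nabla_{JY}X = \nabla$ applied along $JY$, and one combines $[A_X,J]=0$ from (ii) with $T(JX,Y)$-type identities ($T$ is a $(2,0)$-form valued in $T^{1,0}M$, so $T(X,Y)$ and $T(JX,Y)=JT(X,Y)$ etc.), though one must check the torsion contributions cancel correctly for real holomorphic $X$.

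For (i): since $X$ is Killing, $L_Xg=0$; combined with $\nabla g=0$ one computes $0 = (L_Xg)(Y,Z) = X\langle Y,Z\rangle - \langle L_XY,Z\rangle - \langle Y,L_XZ\rangle$ and subtracts the corresponding identity with $L_X$ replaced by $\nabla_X$ (which is zero since $\nabla g=0$), obtaining $\langle A_XY,Z\rangle + \langle Y,A_XZ\rangle = 0$, i.e. $A_X$ is skew-symmetric. For (ii): since $X$ is real holomorphic, $L_XJ=0$, i.e. $L_X\circ J = J\circ L_X$; subtracting $\nabla_X\circ J = J\circ\nabla_X$ (from $\nabla J=0$) yields $A_X\circ J = J\circ A_X$, which is $[A_X,J]=0$. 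This is exactly the assertion already recalled in \S2 of the paper. For (iv): $\iota_XT$ is the $T^{1,0}M$-valued $1$-form $Y\mapsto T(X,Y)$; using $A_XY = -\nabla_YX - T(X,Y)$ from (iii), so $T(X,Y) = -\nabla_YX - A_XY$, and I would apply the general exterior-derivative formula (\ref{eq:general1}) to the $1$-form $\iota_XT$, expressing $d(\iota_XT)$ in terms of $\nabla(\iota_XT)$, $T$ itself, and the first Bianchi identity; Proposition \ref{prop:62} ($\nabla_Y(A_X) = R_{XY}$) together with $\nabla T=0$ should make the curvature and torsion terms assemble into the Chern first Bianchi identity (\ref{eq:B1}), forcing $d(\iota_XT)=0$. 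An alternative, cleaner route: note $\mathcal{L}_X\omega = 0$ (Killing $+$ holomorphic $\Rightarrow$ preserves $\omega$), and relate $\iota_XT$ to $\iota_X\partial\omega$ via the identity $\partial\omega = \sqrt{-1}T'$, then use $d\omega$-type Cartan calculus.

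The part I expect to be the main obstacle is (v), the Jacobi (second-order) equation for $A_X$. The strategy is to differentiate the first-order identity of Proposition \ref{prop:62}, $\nabla_Y(A_X) = R_{XY}$, once more: apply $\nabla_S$ to both sides to get $\nabla_S\nabla_Y(A_X) = \nabla_S(R_{XY})$, then subtract $\nabla_{\nabla_SY}(A_X) = R_{X,\nabla_SY}$ to form the Hessian $\nabla^2_{S,Y}(A_X) = (\nabla_SR)_{XY} + R_{\nabla_SX,Y}$. Here the CAS hypothesis enters decisively: $\nabla R = 0$ kills the first term, leaving $\nabla^2_{S,Y}(A_X) = R_{\nabla_SX,\,Y}$. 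The remaining task is to identify $\nabla_SX$ with $-A_X(S)$ up to the torsion term $-T(X,S)$ using (iii), and to verify that the torsion contribution $R_{-T(X,S),Y}$ vanishes — this should follow from (\ref{eq:TR}) in Lemma \ref{lemma:1}, since $T(X,S) \in \mathcal{W}$ lies in the kernel of $R$, or more directly from $\sum_r T^r_{im}R_{r\bar{j}k\bar{\ell}}=0$. Thus $\nabla^2_{S,Y}(A_X) = -R_{A_X(S),Y}$, as claimed. The care needed is in tracking that ``$\nabla^2_{S,Y}$'' as defined (with the correction term $\nabla_{\nabla_SY}$, not $\nabla_{\nabla^{LC}_SY}$, and using the Chern connection throughout) interacts correctly with the Bianchi-type manipulations, and that all vector fields here are real so the complexified identities (\ref{eq:TR}) must be invoked after decomposing into types.
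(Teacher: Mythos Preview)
Your proposal is correct and, for part (v) --- the only part the paper actually proves --- takes essentially the same route: differentiate Proposition~\ref{prop:62} to obtain $\nabla^2_{S,Y}(A_X) = (\nabla_S R)_{XY} + R_{\nabla_S X,\,Y}$, kill the first term by $\nabla R=0$, rewrite $\nabla_S X = -A_X(S) - T(X,S)$ via (iii), and drop the torsion contribution because $R$ vanishes on the image of $T$ (Lemma~\ref{lemma:1}, equation~(\ref{eq:TR})). The paper's one-line computation is exactly this. Your arguments for (i)--(iii) are the standard ones (the paper leaves them implicit), and your ``in particular'' in (iii) works as you describe once you note $T(X,JY)=JT(X,Y)$ for the Chern torsion.

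One small caution on (iv): your sketch invokes $\nabla T=0$, which is a CAS hypothesis, whereas the statement of (iv) only assumes $X$ is real holomorphic and Killing. The paper does not supply a proof of (iv) either, so there is nothing to compare against; but if you want (iv) in the generality stated, the cleaner route is the one you mention second: since the flow of $X$ consists of holomorphic isometries it preserves $\nabla$ and hence $T$, so $L_XT=0$, and one should then argue from this (rather than from $\nabla T=0$) that $d(\iota_XT)=0$ for the exterior derivative of~(\ref{eq:general1}).
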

\begin{proof} Direct calculation shows that
\begin{eqnarray*}
\nabla_S \cdot \nabla_Y (A_X)-\nabla_{\nabla_S Y} A_X&=&\nabla_S (R_{X, Y})-R_{X, \nabla_S Y}\\
&=& R_{\nabla_S X, Y}=-R_{A_X(S)+T(S, X), Y}=-R_{A_X(S), Y}.\end{eqnarray*}
Here we have used $\nabla R=0$ and $R$ vanishes on the image of $T$.
\end{proof}
Proposition \ref{prop:62} and equation (\ref{eq:kostant3}) give the 1st and 2nd order equations that $A_X$ satisfies.

Consider a type $(1,0)$ vector field $Z=X-\sqrt{-1}JX$. By definition, $X$ is real holomorphic if $L_XJ=0$. Since $L_{JX}J - JL_XJ = N_J(X, \cdot )$, which vanishes as $J$ is integrable, we know that $X$ will be real holomorphic when and only when $JX$ is real holomorphic. In this case we say that $Z$ is a holomorphic vector field. It could also be equivalently described as a type $(1,0)$ vector field which could be expressed as $\sum_i f_i \frac{\partial}{\partial z_i}$ with each $f_i$ hlomorphic in local holomorphic coordinates, or equivalently, by the condition that $\nabla_{\overline{W}}Z=0$ for any type $(1,0)$ vector field $W$ on the manifold. In particular, parallel field of type $(1,0)$ are holomorphic.

On the other hand, a real vector field $X$ is Killing if $L_Xg=0$, or equivalently, $\langle \nabla_YX+T(X,Y), B\rangle = -\langle \nabla_BX+T(X,B),Y\rangle $ for any (real) $Y$ and $B$. So if $\nabla X=0$ then the Killing condition is equivalent to
\begin{equation}
\label{eq:Killing}
\langle T(X,Y),B\rangle = -\langle T(X,B),Y\rangle , \ \ \ \forall \mbox{ real vectors} \ Y, B.
\end{equation}
Also since  $JT(X,Y)=T(X,JY)$ for any $Y$, by (\ref{eq:Killing}) we know that if both  $X$ and $JX$ are Killing, then the following holds:
\begin{equation} \label{eq:2Killing}
T(X,\cdot )=0.
\end{equation}
The converse  statement is certainly true, namely, both $X$ and $JX$ are Killing when (\ref{eq:2Killing}) holds. In summary,

{\em On a Hermitian manifold $(M^n,g)$, for any type $(1,0)$ vector field $Z$ which is parallel under the Chern connection $\nabla$, then its real part $X$ and negative imaginary part $JX$ will both be real holomorphic.  $X$ is Killing if and only if (\ref{eq:Killing}) holds, while both $X$ and $JX$ are Killing if and only if (\ref{eq:2Killing}) holds. }

For general $Z=X-\sqrt{-1}JX$ we call $Z$ a Killing vector field if both  $X$ and $JX$ are Killing.

\begin{corollary} For a Hermitian manifold, let $Z=X-\sqrt{-1}JX$ be a parallel vector field. Then $A_Z=-T(Z, \cdot )$ sends $ T'M$ into $ \mathcal{W}$ and $A_Z$ is parallel. If $Z$ is  Killing, then $A_Z= 0$ and $Z$ annihilates the Chern curvature.
\end{corollary}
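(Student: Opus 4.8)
The plan is to deduce all four conclusions from the single identity $A_Z=-T(Z,\cdot)$ and then feed that identity into Proposition~\ref{prop:62}. First I would record the basic structural facts: since $Z=X-\sqrt{-1}JX$ is $\nabla$-parallel and the Chern connection satisfies $\nabla J=0$, the real vector fields $X$ and $JX$ are both $\nabla$-parallel; in particular $\nabla_{\overline W}Z=0$ for every $(1,0)$ field $W$, so $Z$ is holomorphic and $X$, $JX$ are real holomorphic. Applying the formula $(\ref{eq:Konstant2})$ of Lemma~\ref{lemma:kostant1}(iii) to the parallel fields $X$ and $JX$ gives $A_X=-T(X,\cdot)$ and $A_{JX}=-T(JX,\cdot)$, so the complex-linear extension $A_Z:=A_X-\sqrt{-1}\,A_{JX}$ equals $-T(Z,\cdot)$. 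Since $T(Z,\overline W)=0$ for the Chern torsion while $T(Z,W)$ lies in the image of $T$, which by definition spans $\mathcal W$, the endomorphism $A_Z$ kills $T^{0,1}M$ and maps $T'M=T^{1,0}M$ into $\mathcal W$.

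Next I would check that $A_Z$ is $\nabla$-parallel by viewing it as a $T^{1,0}M$-valued $1$-form and computing, for arbitrary $W$ and $Y$,
$$(\nabla_W A_Z)(Y)=\nabla_W\big(-T(Z,Y)\big)+T(Z,\nabla_W Y)=-(\nabla_W T)(Z,Y)-T(\nabla_W Z,Y).$$
The second term vanishes because $\nabla Z=0$, and the first vanishes because the Chern torsion is parallel (Lemma~\ref{lemma:1}); hence $\nabla A_Z=0$. This is the one place where the CAS hypothesis $\nabla T=0$ is genuinely used; the remaining assertions hold on any Hermitian manifold carrying a parallel $(1,0)$ field.

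For the last assertion, assume in addition that $Z$ is Killing, namely both $X$ and $JX$ are Killing. Each of them is also $\nabla$-parallel, so $(\ref{eq:2Killing})$ applies to $X$ and to $JX$ separately and yields $T(X,\cdot)=0$ and $T(JX,\cdot)=0$; therefore $T(Z,\cdot)=0$ and $A_Z=-T(Z,\cdot)=0$. In particular $A_X=-T(X,\cdot)=0$, and since $X$ is real holomorphic and Killing, Proposition~\ref{prop:62} gives $R_{XY}=\nabla_Y(A_X)=0$ for every $Y$; the same argument with $JX$ gives $R_{(JX)Y}=0$, so by $\mathbb C$-bilinearity $R_{ZY}=R_{XY}-\sqrt{-1}\,R_{(JX)Y}=0$ for all $Y$. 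That is, $Z$ annihilates the Chern curvature.

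I do not expect a real obstacle here: once Proposition~\ref{prop:62}, the identity $(\ref{eq:Konstant2})$, and $(\ref{eq:2Killing})$ are in hand, the argument is essentially bookkeeping. The only points requiring a little care are the complex-bilinear conventions for $A$ and $T$ (so that $A_Z=-T(Z,\cdot)$ is indeed the correct extension of $A_X=L_X-\nabla_X$), the standard fact that a parallel $(1,0)$ field is holomorphic, and keeping the single use of $\nabla T=0$ isolated to the parallelness claim.
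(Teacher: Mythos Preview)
Your argument is correct and follows essentially the same route as the paper: the first part unwinds the identity $A_Z(Y)=-\nabla_YZ-T(Z,Y)$ from Lemma~\ref{lemma:kostant1}(iii) for a parallel $Z$, and the second part combines $(\ref{eq:2Killing})$ with Proposition~\ref{prop:62}. The paper's own proof is terse---it cites ``the above discussion'' for the first part and Proposition~3.3 of \cite{Ust} together with Proposition~\ref{prop:62} for the second---whereas you work everything out internally, which is arguably cleaner. One small point: the vanishing of $(\nabla_WT)(Z,Y)$ is simply the CAS hypothesis $\nabla T=0$ itself, not a consequence recorded in Lemma~\ref{lemma:1}; otherwise your bookkeeping is accurate.
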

\begin{proof}
The first part follows from the above discussion. The second part follows from Proposition 3.3 of \cite{Ust} and Proposition \ref{prop:62} above.
\end{proof}

On the other hand,  we also have the following lemma if $X$ and $JX$ are both Killing vectors.

\begin{lemma}For a Hermitian manifold $(M, g)$, if $X$ and $JX$ are both Killing vectors, if additionally $JX$ is real holomorphic then $X$ and $JX$ are both parallel with respect to the torsion connection, namely $A_X, A_{JX}=0$.
\end{lemma}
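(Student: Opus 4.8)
The plan is to prove the slightly stronger-sounding but equivalent statement $A_X=0$; the companion identity $A_{JX}=0$ then follows once we observe $A_{JX}=J\circ A_X$. Recall that $A_X=0$ is exactly the assertion that $X$ is parallel for the torsion-twisted connection, since $\nabla^T_S X=-A_X S$.

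First I would record that $X$ is automatically real holomorphic as well: because $J$ is integrable, $N_J\equiv 0$, and the identity $L_{JX}J=J\,L_XJ$ (valid whenever $N_J=0$) turns the hypothesis $L_{JX}J=0$ into $J\,L_XJ=0$, hence $L_XJ=0$. So $X$ and $JX$ are \emph{both} simultaneously Killing and real holomorphic, and Lemma~\ref{lemma:kostant1} applies to each of them. In particular $A_X$ is skew-symmetric (part (i)) and commutes with $J$ (part (ii)), while part (iii) together with $\nabla J=0$ and the $J$-linearity $T(JX,Y)=J\,T(X,Y)$ of the Chern torsion gives
\begin{equation*}
A_{JX}(Y)=-\nabla_Y(JX)-T(JX,Y)=-J\nabla_YX-J\,T(X,Y)=J\,A_X(Y),
\end{equation*}
i.e.\ $A_{JX}=J\circ A_X$.

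Next I would invoke that $JX$ is Killing, which by part (i) makes $A_{JX}=J\circ A_X$ skew-symmetric. Pairing this against arbitrary real vectors $Y,B$ and using that $J$ is skew-adjoint, that $A_X$ is skew-adjoint, and that $[A_X,J]=0$, one checks that both $\langle JA_XY,B\rangle$ and $\langle JA_XB,Y\rangle$ reduce to $-\langle A_XY,JB\rangle$, so the skew-symmetry relation collapses to $\langle A_XY,JB\rangle=0$ for all real $Y,B$. Since $B\mapsto JB$ is a bijection of the tangent space, this forces $A_XY=0$ for all $Y$, i.e.\ $A_X=0$, whence $A_{JX}=J\,A_X=0$.

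There is no genuine obstacle here — the whole proof is a short sign-bookkeeping computation — but the point to be careful about is precisely that cancellation: it works only because all three facts (skew-symmetry of $A_X$, $[A_X,J]=0$, and skew-symmetry of $JA_X$) are used at once, and it is the interplay of $J^\ast=-J$ with $A_X^\ast=-A_X$ that makes the two contributions reinforce rather than cancel with a spurious sign. One should also emphasize what the lemma does \emph{not} say: from $A_X=0$ one only obtains $\nabla_YX=-T(X,Y)$, not $\nabla X=0$, so this genuinely strengthens the earlier corollary (which assumed $Z=X-\sqrt{-1}JX$ parallel) rather than merely repeating it.
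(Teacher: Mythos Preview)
Your proof is correct and follows essentially the same route as the paper: both establish $A_{JX}=J\circ A_X$ from part (iii) of Lemma~\ref{lemma:kostant1} together with $T(JX,\cdot)=J\,T(X,\cdot)$ and $\nabla J=0$, and then play off the skew-symmetry of $A_X$ against the skew-symmetry of $JA_X$ and the $J$-commutation to force $A_X=0$. The only cosmetic difference is that the paper uses $[A_{JX},J]=0$ directly (from the hypothesis that $JX$ is real holomorphic), whereas you first observe via $N_J=0$ that $X$ is real holomorphic as well and then use $[A_X,J]=0$; the resulting two-line linear algebra is the same in substance.
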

\begin{proof} Since $T(JX, Y)=JT(X, Y)$, by (iii) of Lemma \ref{lemma:kostant1}, $A_{IX}=JA_X$. Now the two equalities
$$
-\langle A_X Y, B\rangle+\langle Y, A_X B\rangle= \langle A_{JX} JY, B\rangle+\langle JY, A_{JX} B\rangle=0, \langle A_X Y, B\rangle+\langle Y, A_X B\rangle=0
$$
implies that $\langle A_X, Y, B\rangle =0$, hence the first claim. Here we have used $A_{JX}JY=JA_{JX} Y$.
\end{proof}

Holonomy theorem of Ambrose-Singer (cf. Theorem 8.1 of \cite{KN}, Vol 1) implies that $R_{X Y}$ lies inside the Lie algebra of the restricted holonomy group of the Chern connection (cf. Lemma 2.2 of \cite{Ni-21} for a short proof of this part of holonomy theorem). Hence $\nabla_Y (A_X)$ is inside the holonomy algebra of the Chern connection.
Kostant \cite{Kostant} proved that for a compact Riemannian manifold  $A_X$ itself also lies inside the Riemannian holonomy algebra. A natural question is {\it  whether or not  $A_X$ is inside the  holonomy algebra (with respect to the Chern connection, which we abbreviate as c-holonomy) if $X$ is real holomorphic and Killing and  $(M, g)$ is a compact Hermitian manifold.} It can be shown that $A_X$ lies inside the normalizer of the c-holonomy algebra. Let $H_p$ denote the holonomy group centered at $p$.

Now let $\mathcal{F}_p=\{ Z\in T^{1, 0}_pM\, |\, h (Z)=Z, \forall \,h\in H_p\}$. By Ambrose-Singer holonomy theorem and  Lemma \ref{lemma:32-1}, if $(M, g)$ is a Hermitian manifold with a CAS structure then we have  $\mathcal{W}_p\subset \mathcal{F}_p$.

\begin{proposition} Let $(M, g)$ be a Hermitian manifold with a  CAS structure.
Let $\mathcal{F}=\cup_{x} \mathcal{F}_x$ be the sub-bundle of $T^{1, 0}M$. Then $\mathcal{F}$ is a holomorphic integrable foliation. Moreover, if $Z_1, \cdots, Z_r$ is a parallel frame of $\mathcal{F}$, then
$$
[Z_i, Z_j]=c^k_{ij} Z_k
$$
for some constant $c^k_{ij}$. In particular, when $M$ is simply-connected, there exists a complex Lie group $F$ acting almost freely, holomorphically on $M$ such that $T^{1, 0}_x(F\cdot x) =\mathcal{F}_x$.
\end{proposition}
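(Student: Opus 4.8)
The plan is to establish that $\mathcal{F}$ is a $\nabla$-parallel holomorphic subbundle which is involutive, and then to extract from the parallelism of $T$ a finite-dimensional complex Lie algebra of holomorphic vector fields that trivialises $\mathcal{F}$, which integrates to the desired action.

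\emph{Step 1 ($\mathcal{F}$ is a holomorphic foliation).} First I would observe that $\mathcal{F}$ is invariant under parallel transport: for a piecewise smooth path $\gamma$ from $x$ to $y$, parallel transport $P_\gamma$ conjugates holonomy groups, $H_y = P_\gamma H_x P_\gamma^{-1}$, so if $Z\in\mathcal{F}_x$ then $h'(P_\gamma Z)=P_\gamma Z$ for all $h'\in H_y$, giving $P_\gamma(\mathcal{F}_x)\subseteq\mathcal{F}_y$ and, applying the same to $\gamma^{-1}$, equality. Hence $\nabla\mathcal{F}\subseteq\mathcal{F}$, and in particular $\dim_{\mathbb C}\mathcal{F}_x$ is constant so $\mathcal{F}$ is a genuine subbundle of $T^{1,0}M$. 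Since the $(0,1)$-part of the Chern connection on $T^{1,0}M$ is the $\bar\partial$-operator of its holomorphic structure, a $\nabla$-parallel subbundle is $\bar\partial$-invariant, so $\mathcal{F}$ is a holomorphic subbundle. Involutivity is exactly the computation in the proof of Lemma \ref{lemma:3}: for type $(1,0)$ sections $X,Y$ of $\mathcal{F}$,
$$[X,Y]=\nabla_X Y-\nabla_Y X-T(X,Y),$$
where $\nabla_X Y,\nabla_Y X\in\mathcal{F}$ because $\mathcal{F}$ is parallel and $T(X,Y)\in\mathcal{W}\subseteq\mathcal{F}$ by the remark preceding the statement; thus $[X,Y]\in\mathcal{F}$ and $\mathcal{F}$ is a holomorphic foliation.

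\emph{Step 2 (constancy of the structure constants).} Given a parallel frame $Z_1,\dots,Z_r$ of $\mathcal{F}$ (such a frame exists on any simply connected region, since the restricted holonomy fixes $\mathcal{F}$ pointwise, hence globally when $M$ is simply connected), each $Z_i$ is holomorphic, as $\bar\partial Z_i=(\nabla Z_i)^{0,1}=0$, and we write $Z_i=X_i-\sqrt{-1}JX_i$ with $X_i=\mathrm{Re}\,Z_i$ real holomorphic. Since $\nabla Z_i=0$ we get $[Z_i,Z_j]=-T(Z_i,Z_j)$, and this section is again parallel:
$$\nabla_W\big(T(Z_i,Z_j)\big)=(\nabla_W T)(Z_i,Z_j)+T(\nabla_W Z_i,Z_j)+T(Z_i,\nabla_W Z_j)=0$$
using $\nabla T=0$. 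Writing $[Z_i,Z_j]=\sum_k c^k_{ij}Z_k$ and applying $\nabla_W$ gives $0=\sum_k (Wc^k_{ij})Z_k$, hence $dc^k_{ij}=0$ and the $c^k_{ij}$ are constants.

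\emph{Step 3 (the complex Lie group action).} On a simply connected $M$ the span $\mathfrak{f}:=\mathrm{span}_{\mathbb R}\{X_1,\dots,X_r,JX_1,\dots,JX_r\}$, equivalently $\mathrm{span}_{\mathbb C}\{Z_1,\dots,Z_r\}$, is closed under the Lie bracket with constant structure constants, hence a $2r$-dimensional real Lie algebra of real holomorphic vector fields, on which multiplication by $\sqrt{-1}$ (i.e.\ $J$) is an integrable complex structure making it a complex Lie algebra of complex dimension $r$. Every element of $\mathfrak{f}$ is parallel, hence complete, since its integral curves are $\nabla$-geodesics of constant $g$-speed, which extend for all time on a complete (in particular compact) manifold by Hopf--Rinow. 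By Palais' integration theorem for finite-dimensional Lie algebras of complete vector fields, $\mathfrak{f}$ is induced by a holomorphic action on $M$ of the connected, simply connected complex Lie group $F$ with $\mathrm{Lie}(F)=\mathfrak{f}$. The orbit $F\cdot x$ is tangent at $x$ to the span of the generators, which is exactly $\mathcal{F}_x$; so the orbits are $r$-dimensional, the isotropy groups are discrete, i.e.\ the action is almost free, and $T^{1,0}_x(F\cdot x)=\mathcal{F}_x$.

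I expect the main obstacle to be not the algebra but the passage from the (always available) local pseudogroup generated by $\mathfrak{f}$ to a genuine global action of the simply connected group $F$: this needs the completeness of the parallel generating vector fields, which I would deduce from completeness of $M$ via the constant-speed $\nabla$-geodesic argument above, followed by Palais' theorem to assemble the flows into an $F$-action on the simply connected $M$. Steps 1--2 alone already give the unconditional part of the statement.
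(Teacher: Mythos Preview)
Your Steps 1--2 are exactly the paper's argument, only written out with more care: the paper's proof simply notes that any $Z\in\mathcal{F}_x$ extends to a global parallel (hence holomorphic) section, and that for two such parallel sections $\nabla([Z,W])=-\nabla T(Z,W)=0$, which simultaneously gives integrability and constancy of the structure constants. Your version separates the parallelism of $\mathcal{F}$, the holomorphicity, and the involutivity, but the content is identical.

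Your Step 3 goes beyond the paper, which does not prove the ``in particular'' clause at all. Your route via Palais' theorem is the natural one. One small imprecision: the integral curves of a $\nabla$-parallel vector field are Chern geodesics, but Hopf--Rinow concerns Levi--Civita geodesics, so it does not apply directly. The correct (and equally standard) argument is the one you essentially have: a $\nabla$-parallel $(1,0)$ field has constant $g$-norm, hence its real and imaginary parts are vector fields of bounded length, and bounded-length vector fields on a metrically complete Riemannian manifold have complete flows. You are right that completeness is needed here and is not part of the stated hypotheses; the paper is silent on this point.
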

\begin{proof} First note that any $Z\in \mathcal{F}_x$ extends to a parallel vector field $Z$ globally on $M$, which is a section of $T^{1, 0}M$. By the nature of the Chern connection $\nabla$, such a section is holomorphic. If $Z, W$ are two global/local  sections of $\mathcal{F}$ obtained by the parallel extension, we have that
$$
\nabla ([Z, W])=-\nabla T(Z, W)=0.
$$
This proves that $\mathcal{F}$ is integrable and that the structure coefficients are constants.
\end{proof}
Note that by the proof of Theorem \ref{thm-Ricciflat}, $\mathcal{W}\oplus {\mathcal{N}_2}\subset\mathcal{F}$.  The above result is motivated by Theorem 3.7 of \cite{Ust}.

\section*{Acknowledgments} { We would like to take this opportunity to express our gratefulness to the anonymous referee
for the careful reading of our paper and for the numerous corrections/suggestions, which
enhanced the readability of the article.
}


\end{document}